\newcommand{\ncm}{\newcommand}
\ncm{\beq}{\begin{equation}}
\ncm{\eeq}{\end{equation}}
\newtheorem{thm}{Theorem}[subsection]
\newtheorem{pro}[thm]{Proposition}
\newtheorem{lem}[thm]{Lemma}
\newtheorem{lem&def}[thm]{Definition \& Lemma}
\newtheorem{cor}[thm]{Corollary}
\newtheorem{thm&def}[thm]{Theorem \& Definition}
\newtheorem{defi}[thm]{Definition}
\newtheorem{exa}[thm]{Example}
\numberwithin{equation}{section}
\ncm{\ob}{\operatorname{ob}}
\ncm{\Nat}{\operatorname{Nat}}
\def\Set{\mathsf{Set}}
\ncm{\simp}{\Delta}
\def\Ab{\mathsf{Ab}}
\def\Ring{\mathsf{Ring}}
\def\Cat{\mathsf{Cat}}
\ncm{\CAT}{\mathsf{CAT}}
\def\Mon{\mathsf{Mon}}
\ncm{\mon}{\mathrm{mon}}
\ncm{\CMon}{\mathsf{CMon}}
\ncm{\Mnd}{\mathsf{Mnd}}
\ncm{\Fun}{\mathsf{Fun}}
\def\M{\mathsf{M}}
\ncm{\Cgd}{\mathsf{Cgd}}
\ncm{\BGD}{\mathsf{Bgd}}
\ncm{\BgdMap}{\mathsf{BgdMap}}
\ncm{\Bim}{\mathbb{BIM}}
\ncm{\YD}{\mathcal{YD}}
\ncm{\Gal}{\mathsf{Gal}}
\ncm{\Fib}{\mathsf{Fib}}
\ncm{\DHR}{\mathsf{DHR}}
\ncm{\lZ}{\mathcal{W}_\ell}
\ncm{\rZ}{\mathcal{W}_r}
\ncm{\Zl}{\mathcal{Z}_\ell}
\ncm{\Zr}{\mathcal{Z}_r}
\ncm{\BCA}{\mathsf{BCA}}
\ncm{\Bslice}{(B\downarrow\Ring)}
\ncm{\MonFun}{\mathsf{MonFun}}
\ncm{\Adj}{\mathsf{Adj}}
\ncm{\El}{\mathsf{Elt}\,}
\ncm{\Imp}{\mathsf{Imp}}
\ncm{\Add}{\mathsf{Add}}
\def\A{\mathcal{A}}
\def\C{\mathcal{C}}
\ncm{\E}{\mathcal{E}}
\ncm{\F}{\mathcal{F}}
\ncm{\G}{\mathcal{G}}
\ncm{\K}{\mathcal{K}}
\ncm{\Po}{\mathbb{P}}
\ncm{\R}{\mathcal{R}}
\ncm{\V}{\mathcal{V}}
\ncm{\W}{\mathcal{W}}
\ncm{\X}{\mathcal{X}}
\ncm{\Z}{\mathcal{Z}}
\ncm{\U}{\mathcal{U}}
\ncm{\End}{\operatorname{End}}
\ncm{\Aut}{\operatorname{Aut}}
\ncm{\Hom}{\operatorname{Hom}}
\ncm{\BiEnd}{\operatorname{BiEnd}}
\ncm{\Coker}{\operatorname{Coker}}
\ncm{\Ima}{\mathrm{Im}\,}
\ncm{\Coim}{\operatorname{Coim}}
\ncm{\kernel}{\operatorname{ker}}
\ncm{\coker}{\operatorname{coker}}
\ncm{\im}{\operatorname{im}}
\ncm{\coim}{\operatorname{coim}}
\ncm{\colim}{\operatorname{colim}}
\ncm{\Coinv}{\operatorname{Coinv}}
\ncm{\id}{\operatorname{id}}
\ncm{\Diff}{\operatorname{Diff}}
\ncm{\Der}{\operatorname{Der}}
\ncm{\ev}{\operatorname{ev}}
\ncm{\coev}{\operatorname{coev}}
\ncm{\dom}{\operatorname{dom}}
\ncm{\codom}{\operatorname{codom}}
\newcommand{\ci}{\circ}
\ncm{\bu}{\bullet}
\def\o{\otimes}
\def\x{\times}
\ncm{\homci}{\,{\sst\square}\,}
\ncm{\hombu}{\,{\sst\blacksquare}\,}
\ncm{\amalgo}[1]{\underset{\scriptscriptstyle #1}{\o}}
\ncm{\am}[1]{\underset{\scriptscriptstyle #1}{\o}}
\ncm{\amo}[1]{\underset{\scriptstyle #1}{\o}}
\ncm{\ractB}{\underset{\scriptscriptstyle B}{\ract}}
\ncm{\ractT}{\underset{\scriptscriptstyle T}{\ract}}
\ncm{\mash}{\Pisymbol{psy}{35}}
\ncm{\mashed}[1]{\underset{\scriptscriptstyle #1}{\Pisymbol{psy}{35}}}
\ncm{\cross}[1]{\underset{\scriptstyle #1}{\rtimes}}
\ncm{\oA}{\amalgo{A}}
\ncm{\oB}{\amalgo{B}}
\ncm{\oC}{\amalgo{C}}
\ncm{\oR}{\amalgo{R}}
\ncm{\oT}{\amalgo{T}}
\ncm{\oL}{\amalgo{L}}
\ncm{\oS}{\amalgo{S}}
\ncm{\oN}{\amalgo{N}}
\ncm{\oH}{\amalgo{H}}
\ncm{\oQ}{\amalgo{Q}}
\ncm{\ex}[1]{\underset{\scriptscriptstyle #1}{\x}}
\def\bra{\langle}
\def\ket{\rangle}
\ncm{\rarr}[1]{\stackrel{#1}{\longrightarrow}}
\ncm{\larr}[1]{\stackrel{#1}{\longleftarrow}}
\ncm{\mapsot}{\leftarrow\!\!\!\raisebox{1pt}{$\scriptscriptstyle |$}}
\def\cop{\Delta}
\ncm{\nullT}{^{(0)}}
\ncm{\nullB}{_{(0)}}
\def\oneB{_{(1)}}
\ncm{\coa}[1]{^{\langle {#1}\rangle}}
\def\eps{\varepsilon}
\ncm{\op}{\mathrm{op}}
\ncm{\coop}{\mathrm{coop}}
\ncm{\rev}{\mathrm{rev}}
\ncm{\dual}{\mathrm{dual}}
\ncm{\co}{\mathrm{co}}
\ncm{\Fi}{\varphi}
\ncm{\OR}{\overrightarrow}
\ncm{\OL}{\overleftarrow}
\ncm{\asso}{\mathbf{a}}
\ncm{\luni}{\mathbf{l}}
\ncm{\runi}{\mathbf{r}}
\ncm{\Ad}{\operatorname{Ad}}
\ncm{\I}{\mathcal{I}}
\def\iso{\stackrel{\sim}{\rightarrow}}
\def\ract{\triangleleft}
\ncm{\under}{\mbox{\rm\_}\,}
\ncm{\ZZ}{\mathbb{Z}}
\ncm{\GG}{\mathbf{G}}
\ncm{\FF}{\mathbb{F}}
\ncm{\Oo}{\mathcal{O}}
\ncm{\Ha}{\mathcal{H}}
\ncm{\Ve}{\mathcal{V}}
\ncm{\Pee}{\mathcal{P}}
\ncm{\adjoint}{\dashv}
\ncm{\into}{\hookrightarrow}
\ncm{\fgp}{\mathrm{fgp}}
\ncm{\adj}{\dashv}
\ncm{\sst}{\scriptstyle}
\ncm{\ssst}{\scriptscriptstyle}
\ncm{\eqby}[1]{\stackrel{(\ref{#1})}{=}}
\ncm{\lef}{{\sst <}}
\ncm{\righ}{{\sst >}}
\begin{document} 
 
\title[]{On the field algebra construction}

\author[]{Korn\'el Szlach\'anyi
}

\address{Research Institute for Particle and Nuclear Physics of the Hungarian Academy of Science\\
H-1525 Budapest, P. O. Box 49, Hungary}

\begin{abstract}
A pure algebraic variant of John Roberts' field algebra construction is presented and applied
to bialgebroid Galois extensions and certain generalized fusion categories.
\vskip 0.5cm
\noindent\textsc{MSC 2000} : 81T05, 16W30; 16B50

\noindent\textsc{Keywords} : quantum fields, bialgebroids, non-commutative Galois theory
\end{abstract}
\thanks{E-mail: \texttt{szlach@rmki.kfki.hu}\\
Partially supported by the Hungarian Scientific Research Fund, OTKA K 68195\\
The Arabian Journal for Science and Engineering, Vol 33, No 2C, pp 459-482 (2008)}

\maketitle

\vskip 1cm

\section{Introduction}

The reconstruction problem in Algebraic Quantum Field Theory (AQFT)
solved by the abstract duality theorem
of S. Doplicher and J. E. Roberts contains, as an important ingredient, the construction of field algebras from observables.
Although this construction relates fiber functors to field algebras, therefore relies only on Tannaka duality, it is
all the more interesting in the quantum group(oid) world which lies beyond the range of the Doplicher-Roberts
Theorem.

Forgetting certain of its structure elements Roberts' field algebra construction
\cite{Ro, Ha-Mu} has the following pure algebraic analogue.
For a (associative, unital) ring $B$, playing the role of the \textit{observable algebra}, we fix a monoidal category $\C$ of ring endomorphisms of $B$. The task is to determine a ring homomorphism $\rho:B\to A$ for each Abelian group
valued monoidal functor $F:\C\to\Ab$, i.e., for each `fiber functor' in the weakest sense. The \textit{field algebra} $A$ should be `large' enough to contain for each $\alpha\in\ob\C$ nonzero elements $a$ generating $\alpha$
by means of
$$
a\rho(b)=\rho(\alpha(b))a, \qquad\forall b\in B,
$$
and should be `small' enough to possess a universal property, in a sense which is also to be clarified.

In this paper we would like to point out to that the field algebra construction $F\mapsto\rho$ is the left
adjoint of another familiar construction in AQFT,
in which a fiber functor is constructed from the `family of Hilbert spaces'  within the field algebra \cite {Do-Ro}.
Moreover, we shall see that the field algebra $A$ can be obtained as the tensor product
$\Upsilon\amo{\C}F$ of the fiber functor with a fixed contravariant monoidal functor $\Upsilon$. The presheaf
$\Upsilon$ is solely determined by the data $\bra B,\C\ket$ and its existence characterizes the categories of interest.
The notion of tensor product `over $\C$' played important role in the Tannaka duality of Joyal and Street \cite{Jo-St}
but in a general context it appeared already in the book \cite{Fr}.
 
For certain categories $\C$ we can find fiber functors in a stronger sense, these are \textit{essentially
strong monoidal} \cite{Sz: Brussels} functors $F$ with the canonical $R$-$R$-bimodule structure of $F(\alpha)$ being finite projective from the right. For such functors the field algebra construction yields field algebras $A=\Upsilon\amo{\C}F$ that are right $H$-comodule algebras over the $R$-bialgebroids $H=F^\lef\amo{\C}F$. The bialgebroid appearing here
is the same structure that is associated to $F$ by Tannaka duality \cite{Phung}. The $R$-dual of $H$ as a coring is the
ring $\Nat(F,F)$ of endo-natural transformations of the `long' forgetful functor $F:\C\to\,_R\M_R\to\Ab$.

We discuss two special, finitary cases of the construction. In the first, the extension $\rho:B\to A$ will be
right adjoint in the 2-category $\Ring$ (see below). We show that this is the same thing as a Kleisli construction in $\Ring$. In the second, the category $\C$ is generated by direct summand diagrams from a single right adjoint endomorphism. This latter case `generalizes' the concept of fusion category in the non-semisimple direction
although the category is restricted to be a category of endomorphisms of a ring.
In both cases the field algebra is a right $H$-Galois extension of the observable algebra $B$ where the
symmetry object $H=F^\lef\amo{\C}F$ is a right $R$-bialgebroid that is finite projective as left $R$-module.

In the rest of this Introduction we summarize some prevalent concepts we use throughout the paper.

\subsection{The 2-category of rings}
Let $\Ring$ be the 2-category in which the objects are the small rings and the hom-categories $\Ring(A,B)$ are defined as follows. The objects in $\Ring(A,B)$ are the ring homomorphisms
(called morphisms for short) from $A$ to $B$ and the hom sets $\Hom(\alpha,\beta)$, for each parallel pair of morphisms $A\to B$, consist of those $t\in B$ which satisfy the intertwiner relation: $t\alpha(a)=\beta(a)t$ for all $a\in A$. 
Since the (vertical) composition in the hom-category $\Ring(A,B)$ is just multiplication in $B$, it
is tempting to denote the composite arrow $\alpha\rarr{t}\beta\rarr{s}\gamma$ simply by the
element $st\in B$. In this way the notation will somewhat hide the categorical structure but the
benefit is brevity.  
Horizontal composition for morphisms $A\rarr{\sigma}B\rarr{\rho}C$ is just composition of homomorphisms, denoted by juxtaposition, $\rho\o\sigma:=\rho\sigma$. 
Horizontal composition for intertwiners
$r:\rho_1\to\rho_2:B\to C$ and $s:\sigma_1\to\sigma_2:A\to B$
is defined by $r\o s:=r\rho_1(s)\equiv \rho_2(s)r:\rho_1\sigma_1\to\rho_2\sigma_2:A\to C$.

This 2-category embeds into the 2-category $\Ab$-$\Cat$ of small $\Ab$-categories as the full sub-2-category generated by the objects that are 1-object categories. Indeed, a (small) ring is nothing but a 1-object category enriched over $\Ab$ and a ring homomorphism is just and additive functor between such categories. Natural transformations $\varphi\to\psi:A\to B$ have therefore a single component which, being an arrow of $B$, is just an
element $t\in B$ for which the naturality condition takes the form of the intertwiner relation.

For any ring $B$ the endo-category $\Ring(B,B)$, the objects of which are the ring endomorphisms of $B$, is
a strict monoidal category with unit object $\iota_B:=\id_B$. Full monoidal subcategories $\C$ of this endo-category
will be the subject of interest for this paper.

As in any 2-category, one says that the 1-cell $\lambda:A\to B$ is left adjoint to the 1-cell $\rho:B\to A$ if
there exist 2-cells $e:\lambda\rho\to B$, $m:A\to\rho\lambda$ such that $\rho(e)m=1_A$ and $e\lambda(m)=1_B$.
This situation is denoted by $\lambda\dashv\rho$. 

For parallel adjunctions $m_i,e_i:\lambda_i\dashv\rho_i:B\to A$, $i=1,2$ we denote by $(\ )^\lef$ the isomorphism
\[
\Hom(\rho_2,\rho_1)\iso\Hom(\lambda_1,\lambda_2), \quad t\mapsto
t^\lef:=e_1\lambda_1(t)\lambda_1(m_2)
\]
and by $(\ )^\righ$ its inverse. If the 1-cells in question all have left adjoints then $(\ )^\lef$ extends to a contravariant and antimonoidal fully faithful endofunctor.

\subsection{The preorder 2-category} \label{ss: preorder}

For parallel 1-cells $\rho,\sigma:A\to B$ in $\Ring$ we define the relation
\[
\rho\leq\sigma\quad\text{if }\ \exists\ \rho\rarr{b_i}\sigma\rarr{a_i}\rho\ \text{such that}\ \sum^n a_i b_i=1_B\,.
\]
and call the diagram $\rho\rarr{b_i}\sigma\rarr{a_i}\rho$ a direct summand diagram.

If direct sums exist in $\Ring(A,B)$ then $\rho\leq\sigma$ iff $\rho$ is a direct summand of a finite direct sum of copies of $\sigma$.

Composing direct summand diagrams we see that for three parallel 1-cells $\rho\leq\sigma$ and $\sigma\leq\tau$ imply $\sigma\leq\tau$. Also, the trivial direct summand diagram yields $\rho\leq\rho$ for all 1-cells.
Therefore $\leq$ is reflexive and transitive.
We denote by $\Po(A,B)$ the preorder obtained in this way on the object set of the category $\Ring(A,B)$.
For 1-cells $\rho_1,\rho_2:B\to C$ and $\sigma_1,\sigma_2:A\to B$ we have the implication
$$
\rho_1\leq\rho_2\ \text{and}\ \sigma_1\leq\sigma_2\quad\Rightarrow\quad \rho_1\sigma_1\leq \rho_2\sigma_2
$$
which can be seen by tensoring the two direct summand diagrams. This implication defines the horizontal product
of a 2-category  $\Po$ in which the hom-categories are the preorders $\Po(A,B)$.

In particular, for each object $A$ the preorder $\Po(A,A)$ is a monoidal preorder. A monoid in this preorder
is a 1-cell $\mu:A\to A$ such that $\mu\mu\leq\mu$ and $\iota_A\leq\mu$.

The preordering is compatible with adjunctions in the following sense. If $\lambda_1\dashv\rho_1$ and $\lambda_2\dashv\rho_2$ then $\lambda_1\leq\lambda_2$ iff $\rho_1\leq\rho_2$. This follows by applying adjunction $(\ )^\lef$ or $(\ )^\righ$ to the direct summand diagrams.

The equivalence relation induced by the preordering $\leq$ is denoted by $\sim$, i.e., $\rho\sim\sigma$ if $\rho\leq\sigma$ and $\sigma\leq\rho$. The following result belongs to standard Morita theory and therefore
stated without proof.
\begin{lem} \label{Morita}
Let $\rho$ and $\sigma$ be parallel 1-cells in $\Ring$. Then vertical composition induces bimodule structures
over the rings $R=\End\rho$ and $S=\End\sigma$ on the following hom-groups: $_SU_R=\Hom(\rho,\sigma)$,
$_RV_S=\Hom(\sigma,\rho)$. If $\rho\leq\sigma$ then
\begin{enumerate}
\item $V_S$ and $_SU$ are finitely generated projective.
\item $_RV$ and $U_R$ are generators.
\item $_SU_R\cong\Hom_{-S}(V,S)$ and $_RV_S\cong\Hom_{S-}(U,S)$.
\item Left and right actions of $R$, respectively, define ring isomorphisms $R\to\End_{-S}(V)$ and $R\to\End_{S-}(U)$.
\item If in addition $\sigma\leq \rho$ then $R$ and $S$ are Morita equivalent rings.
\end{enumerate}
\end{lem}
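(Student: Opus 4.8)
The plan is to recognise the data in the statement as a Morita context and then to run the familiar arguments, each of which reduces to the single identity supplied by $\rho\leq\sigma$. Put $U=\Hom(\rho,\sigma)$, $V=\Hom(\sigma,\rho)$, $R=\End\rho$, $S=\End\sigma$; all of these are subsets of $B$ and vertical composition is multiplication in $B$, so $1_B$ is the unit of both $R$ and $S$. Multiplication in $B$ restricts to an $R$-$R$-bilinear map $\tau\colon V\otimes_S U\to R$ sending $v\otimes u$ to the composite $\rho\xrightarrow{u}\sigma\xrightarrow{v}\rho$ (an endomorphism of $\rho$), and to an $S$-$S$-bilinear map $\mu\colon U\otimes_R V\to S$ sending $u\otimes v$ to $\sigma\xrightarrow{v}\rho\xrightarrow{u}\sigma$; associativity of the product of $B$ gives the mixed relations $\tau(v,u)\,v'=v\,\mu(u,v')$ and $\mu(u,v)\,u'=u\,\tau(v,u')$, so $(R,S,U,V,\tau,\mu)$ is a Morita context. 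The hypothesis $\rho\leq\sigma$ produces $a_1,\dots,a_n\in V$ and $b_1,\dots,b_n\in U$ with $\sum_i a_ib_i=1_B$; since $\mathrm{im}\,\tau$ is an $R$-$R$-subbimodule of $R$, i.e.\ a two-sided ideal, and contains $1_R=1_B$, the map $\tau$ is surjective. From here on every clause is short and uses only $\sum_i a_ib_i=1$ together with associativity in $B$, so I would simply carry out those computations.

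For (1): the identity $v=1_Bv=\sum_i a_i(b_iv)$, with each $b_iv\in S$, exhibits $\{a_i\}$ together with the right $S$-linear functionals $v\mapsto b_iv$ as a finite dual basis, presenting $V_S$ as a direct summand of $S^{\,n}$; symmetrically $u=\sum_i(ua_i)b_i$ with $ua_i\in S$ presents ${}_SU$ as finitely generated projective. For (2): the maps $V\to R$, $v\mapsto vb_i$, are left $R$-linear, so $1_R=\sum_i a_ib_i$ lies in the trace ideal of ${}_RV$, which is therefore all of $R$; hence ${}_RV$ is a generator, and the maps $u\mapsto a_iu$ do the same for $U_R$. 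For (3): the assignment $u\mapsto(v\mapsto uv)$ is an $S$-$R$-bimodule map $U\to\Hom_{-S}(V,S)$; it is injective because $u=\sum_i(ua_i)b_i$, so $uv=0$ for all $v$ forces $u=0$, and surjective because for $f\in\Hom_{-S}(V,S)$ the element $\sum_i f(a_i)b_i\in U$ represents $f$ (using $v=\sum_i a_i(b_iv)$ and right $S$-linearity of $f$); the isomorphism ${}_RV_S\cong\Hom_{S-}(U,S)$, $v\mapsto(u\mapsto uv)$, is obtained in the same way. For (4): left multiplication is a ring homomorphism $R\to\End_{-S}(V)$, injective because $r=\sum_i(ra_i)b_i$ and surjective because for $\theta\in\End_{-S}(V)$ the element $\sum_i\theta(a_i)b_i$ lies in $R$ and acts as $\theta$ (right $S$-linearity of $\theta$); the case of $U$ is symmetric, with the usual convention that endomorphisms of a left module are composed on the opposite side, so that right multiplication is a homomorphism and not merely an anti-homomorphism.

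For (5): surjectivity of $\tau$ already forces injectivity, since if $\sum_k v_ku_k=0$ in $R$ then, inside $V\otimes_S U$, one has $\sum_k v_k\otimes u_k=\sum_{i,k}a_i\otimes(b_iv_k)u_k=\sum_i a_i\otimes b_i\bigl(\sum_k v_ku_k\bigr)=0$; and the same reasoning with $\rho$ and $\sigma$ interchanged uses $\sigma\leq\rho$ to make $\mu$ a bijection too, equivalently to add the statements that ${}_RV$ is finitely generated projective, $V_S$ is a generator, and $S\cong\End_{R-}(V)$. Thus ${}_RV_S$ is a faithfully balanced bimodule that is a finitely generated projective generator on each side, hence an invertible bimodule (with inverse ${}_SU_R$), which is exactly the data of a Morita equivalence between $R$ and $S$. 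I do not expect a genuine obstacle anywhere in this; the only points needing care are keeping track of the four one-sided module structures (and of the side on which the endomorphism rings in (4) act) and checking that the images in question really are two-sided ideals, so that containing $1_B$ makes them everything.
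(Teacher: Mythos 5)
Your proof is correct, and it is exactly the standard Morita-theory argument that the paper itself invokes: the lemma is stated there without proof, with the remark that it ``belongs to standard Morita theory.'' Your identification of $(R,S,U,V,\tau,\mu)$ as a Morita context with $\tau$ surjective (hence bijective) from $\sum_i a_ib_i=1_B$, followed by the dual-basis, trace-ideal, and double-centralizer computations, supplies precisely the omitted details, and I find no gaps in the verification.
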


\subsection{Why rings?}

The question may arise why we consider rings instead of $k$-algebras over a field or commutative ring $k$.
That is to say, why do we restrict the theory to such an `extreme' base ring as $\ZZ$? In fact we do not.
More precisely, there is a point of view from which the contrary is true.
Replacing $\Ab$ with some $\M_k$ and, accordingly, all $\Ab$-categories and $\Ab$-functors with $k$-linear ones,
would be the restriction. This point of view is that of AQFT according to which everything relevant should be
reconstructable from the observable data $\bra B, \C\ket$. If for example $B$ has a $K$-algebra structure and we
consider for $\C$ a full subcategory of $\Ring(B,B)$ which contains as objects only $K$-linear endomorphisms then
$\C$ will be automatically $K$-linear as well as the functor $\Upsilon$. Therefore taking only $K$-linear fiber functors
$F$ the field algebra construction will lead us to $K$ algebra maps $\rho:B\to A$. Since our steps never
explicitly use $K$, forgetting it, we may work on the $\Ab$-level and leave it to the specific physical problem and probably to additional, as yet unknown, principles to decide which $k$ is the best ground ring. This attitude is taken when we consider essentially strong monoidal functors $F$ to $\Ab$ instead of strong monoidal ones to some bimodule category and determine the (noncommutative) ground ring $R$ as the image under $F$ of the unit monoid.

\section{Extensions of $B$ and fiber functors on $\C$}

Our starting point is a small monoidal $\Ab$-category $\C$ which is embedded as a full monoidal subcategory
of $\Ring(B,B)$ for some ring $B$. The characterizing property of such categories $\C$ is the existence of certain Abelian group valued presheaves $\Upsilon$ on $\C$ 
in terms of which the field algebra construction can be written as briefly as
$\Upsilon\amo{\C}\under$.

\subsection{The basic presheaf on $\C$}

\begin{pro} \label{pro: Ups}
For a monoidal $\Ab$-category $\C$ to be embedable, as a (full) monoidal subcategory, into some
endo-category of $\Ring$ it is necessary and sufficent to exist a faithful, additive, essentially strong monoidal functor
$\Upsilon:\C^\op\to\Ab$ such that in its canonical factorization
\[
\C^\op\to \,_B\M_B\to\,_B\M\to\Ab
\]
the $\C^\op\to\,_B\M$ part has constant object map (and the strong part $\C^\op\to\,_B\M_B$ is full).
Here $B$ denotes the image of the identity monoid under the functor.
\end{pro}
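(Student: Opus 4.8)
The plan is to prove the two implications separately, building $\Upsilon$ from the embedding in one direction and recovering the embedding from $\Upsilon$ in the other.

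For necessity, suppose $\C$ is a full monoidal subcategory of $\Ring(B,B)$. Each object $\alpha$ of $\C$ is a ring endomorphism $\alpha:B\to B$, and each arrow $t:\alpha\to\beta$ is an element $t\in B$ with $t\alpha(b)=\beta(b)t$. I would define $\Upsilon(\alpha):=\Hom_{\Ring(B,B)}(\iota_B,\alpha)=\{a\in B : ab=\alpha(b)a\ \forall b\in B\}$, i.e. the group of intertwiners from the identity to $\alpha$, and on arrows $\Upsilon(t:\alpha\to\beta):\Upsilon(\beta)\to\Upsilon(\alpha)$ should be postcomposition, $a\mapsto ta$ (note the contravariance: this is why the domain is $\C^{\op}$). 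The ring $B=\Upsilon(\iota_B)$ carries an obvious $B$-$B$-bimodule structure, and every $\Upsilon(\alpha)$ is a sub-$B$-$B$-bimodule of $B$ via $b\cdot a\cdot b' := ab'$ for the right action and $b\cdot a := \alpha(b)a = ab$… — here one must be careful: the correct bimodule structure making $\Upsilon$ essentially strong monoidal is the one in which the left $B$-action on $\Upsilon(\alpha)$ is twisted by $\alpha$. The monoidal structure $\Upsilon(\alpha)\o_B\Upsilon(\beta)\to\Upsilon(\alpha\beta)$ is induced by multiplication in $B$, $a\o a'\mapsto aa'$, and one checks $aa'$ indeed intertwines $\iota_B$ and $\alpha\beta$. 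Faithfulness of $\Upsilon$ on arrows amounts to: if $t\in B$ and $ta=0$ for all $a\in\Upsilon(\alpha)$ (in particular $t\cdot 1$, since $1\in\Upsilon(\iota_B)$, and more relevantly the generating elements of $\alpha$) then $t=0$ — this needs the hypothesis that $\C$ really sits inside $\Ring(B,B)$ with enough intertwiners, and is the point where the "fullness of the strong part" and the definition of the factorization do their work. The factorization $\C^{\op}\to\,_B\M_B\to\,_B\M\to\Ab$ is then manifest: the first functor is $\alpha\mapsto\Upsilon(\alpha)$ with its bimodule structure, the second forgets the right action, the third forgets the left; and the $\C^{\op}\to\,_B\M$ part has constant object map precisely because after forgetting the right $B$-action every $\Upsilon(\alpha)$ is (as a left module, via the twist) just ${}_B B$ — this is where essential strength on the left enters.

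For sufficiency, suppose we are given such a $\Upsilon:\C^{\op}\to\Ab$ with $B:=\Upsilon(I)$ ($I$ the monoidal unit of $\C$). Since the $\C^{\op}\to\,_B\M$ composite has constant object map, for every $\alpha$ we have $\Upsilon(\alpha)\cong {}_B B$ as left $B$-modules, hence $\alpha$ acts on $\Upsilon(\alpha)\cong B$ by a right $B$-module endomorphism — but $\End_{B-}(B_?)$… rather, the right $B$-module structure on $\Upsilon(\alpha)$ (which exists because the strong part lands in ${}_B\M_B$) transported across the iso $\Upsilon(\alpha)\cong{}_BB$ becomes a new right $B$-action on ${}_BB$, i.e. a ring endomorphism $\hat\alpha:B\to B$ (the right action of $b$ is right multiplication by $\hat\alpha(b)$ after the identification). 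This assignment $\alpha\mapsto\hat\alpha$ is the object part of the desired embedding $\C\to\Ring(B,B)$; on arrows, an arrow $t:\alpha\to\beta$ goes to $\Upsilon(t)$ viewed, after the identifications $\Upsilon(\alpha)\cong B\cong\Upsilon(\beta)$, as left-multiplication by an element of $B$ — and the naturality square for $\Upsilon(t)$ together with the monoidality constraints translates exactly into the intertwiner relation $t\hat\alpha(b)=\hat\beta(b)t$. Fullness of this functor comes from fullness of the strong part $\C^{\op}\to\,_B\M_B$, and faithfulness from faithfulness of $\Upsilon$. That $\alpha\mapsto\hat\alpha$ is strong monoidal — i.e. $\widehat{\alpha\beta}=\hat\alpha\hat\beta$ — follows from the essential-strength isomorphisms $\Upsilon(\alpha)\o_B\Upsilon(\beta)\iso\Upsilon(\alpha\beta)$ and the unit isomorphism $\Upsilon(I)=B$.

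The main obstacle, and the part deserving genuine care rather than routine bookkeeping, is pinning down \emph{which} $B$-$B$-bimodule structure on $\Upsilon(\alpha)$ is the right one and verifying that "constant object map of the $\C^{\op}\to\,_B\M$ part" is exactly equivalent to "$\alpha$ acts as a ring endomorphism". Concretely: one must show that a right $B$-module structure on the left module ${}_B B$ that is compatible with the left structure only up to the monoidal coherence — not strictly — nonetheless yields an honest, unital, multiplicative map $B\to B$, and conversely that every endomorphism arises this way; the essential (as opposed to strict) strength is what allows $\hat\alpha$ to be a genuine endomorphism rather than merely an endomorphism "up to inner twist". I expect the bookkeeping with the monoidal constraints $\Upsilon_2:\Upsilon(\alpha)\o_B\Upsilon(\beta)\to\Upsilon(\alpha\beta)$ and $\Upsilon_0:\ZZ\to B$, and checking they transport to the strict equalities $\widehat{\alpha\beta}=\hat\alpha\hat\beta$, $\widehat{I}=\id_B$, to be the lengthiest step, but conceptually straightforward once the bimodule structures are fixed.
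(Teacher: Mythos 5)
Your sufficiency argument is essentially the paper's: the constant object map forces each $\Upsilon(\alpha)$ to be ${}_BB$ as a left module, the right action then has to be right multiplication by $\tilde\alpha(b)$ for a ring endomorphism $\tilde\alpha$, arrows become (right, not left) multiplications by intertwiners, and essential strength gives $\tilde\alpha\tilde\beta=\widetilde{\alpha\beta}$. That half is fine modulo the left/right slip.

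The necessity direction, however, is built on the wrong object. You set $\Upsilon(\alpha):=\Hom_{\Ring(B,B)}(\iota_B,\alpha)$, the intertwiner space from the identity. This cannot satisfy the conclusion of the Proposition: these groups vary with $\alpha$ (e.g.\ $\Hom(\iota,\iota)$ is the center of $B$, and $\Hom(\iota,\alpha)$ may well be $0$ for an outer $\alpha$), so the ${}_B\M$-valued factor does not have constant object map and is certainly not the left regular module ${}_BB$; faithfulness fails whenever some $\Hom(\iota,\alpha)=0$ while $\C(\alpha,\beta)\neq 0$, and no appeal to fullness repairs this. It is not even a functor on $\C^\op$: for $t:\alpha\to\beta$ postcomposition sends $\Hom(\iota,\alpha)$ to $\Hom(\iota,\beta)$, i.e.\ covariantly, so there is no induced map $\Upsilon(\beta)\to\Upsilon(\alpha)$. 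The construction the statement is asking for is much blunter: take $\Upsilon(\alpha)=B$ for \emph{every} $\alpha$ (the whole ring, not a subgroup of intertwiners), with $\Upsilon(t)$ equal to right multiplication by $t$, monoidal structure $\Upsilon_{\alpha,\beta}(b_1\o b_2)=b_1\alpha(b_2)$, and the bimodule ${}_BB_\alpha$ carrying the regular left action and the $\alpha$-twisted right action $x\cdot b=x\alpha(b)$. Then faithfulness is immediate ($1\cdot t=t$), the constant object map is built in, and essential strength is exactly the canonical isomorphism $B_\alpha\o_B B_\beta\cong B_{\alpha\beta}$. What you constructed instead is (a degenerate case of) the covariant fiber functor $\alpha\mapsto\Hom(\rho,\rho\alpha)$ of the later sections, which plays a different role in the paper.
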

\begin{proof}
\textit{Necessity:} For a ring $B$ let $\C$ be a monoidal subcategory of $\Ring(B,B)$.
Define $\Upsilon:\C^\op\to \Ab$ as the functor with constant object map $\Upsilon(\alpha)=B$ $\forall\alpha\in\ob\C$
and for $t\in\C(\alpha,\beta)$ let $\Upsilon(t):B\to B$ be right multiplication by $t$. We set the following monoidal structure
\begin{align*}
\Upsilon_{\alpha,\beta}&:B\o B\to B\,,\quad b_1\o b_2\mapsto b_1\alpha(b_2)\\
\Upsilon_0&:\ZZ\to B\,,\qquad 1\mapsto 1_B\,.
\end{align*}
Clearly, $\Upsilon$ is a faithful additive functor.
As for the monoidality, the associativity and unitality constraints can be easily verified.
Naturality of $\Upsilon_{\bullet,\bullet}$ for $\alpha\rarr{t}\alpha'$ and
$\beta\rarr{s}\beta'$ follows from the identity $b_1\alpha'(b_2)t\alpha(s)=b_1t\alpha(b_2s)$. That this monoidal structure is essentially strong means that $\Upsilon_{\alpha,\beta}$ is a coequalizer in
\begin{equation*}
\Upsilon(\alpha)\o\Upsilon(\iota)\o\Upsilon(\beta)
\parbox{70pt}{
\begin{picture}(70,40)
\put(5,17){\vector(1,0){60}} \put(15,28){$1\o\Upsilon_{\iota,\beta}$}
\put(5,23){\vector(1,0){60}} \put(15,6){$\Upsilon_{\alpha,\iota}\o 1$}
\end{picture}
}
\Upsilon(\alpha)\o\Upsilon(\beta) 
\parbox{50pt}{
\begin{picture}(50,40)
\put(5,20){\vector(1,0){38}} \put(46,20){\vector(1,0){0}} \put(15,25){$\Upsilon_{\alpha,\beta}$}
\end{picture}
}
\Upsilon(\alpha\beta)\,.
\end{equation*}
In our present situation $\Upsilon(\iota)$ is the ring $B$ with multiplication $\Upsilon_{\iota,\iota}$ and for every 
object $\alpha$ the $\Upsilon(\alpha)$ is the underlying Abelian group of a $B$-$B$-bimodule $\hat\Upsilon(\alpha)$ with regular action $\Upsilon_{\iota,\alpha}$ from the
left and with $\alpha$-twisted regular action $\Upsilon_{\alpha,\iota}$ from the right. Therefore the above diagram
is nothing but the coequalizer defining the tensor product $B_\alpha\oB B_\beta\cong B_{\alpha\beta}$ of the 
appropriate $B$-$B$-bimodule structures on $B$.
Note that the $\bra \Upsilon(\alpha),\Upsilon_{\iota,\alpha}\ket$ being the left regular $B$-module for all $\alpha$ means that not only $\Upsilon$ has constant object map but its $\C^\op\to\,_B\M$ factor as well. (Since every bimodule map $\hat \Upsilon(\beta)\to \hat \Upsilon(\alpha)$ is right multiplication on $B$ with an intertwiner
$\alpha\to\beta$, if $\C\subset\Ring(B,B)$ is a full subcategory  then $\hat\Upsilon$ is full.)

\textit{Sufficiency:} Let $\C$ be any monoidal $\Ab$-category, not necessarily strict. We denote the coherence isomorphisms for associativity and unitality (of $\C$) by $\asso_{\alpha,\beta,\gamma}$ and $\luni_\alpha$, $\runi_\alpha$, respectively. For $U:\C^\op\to\Ab$ a functor of the required type we can construct the ring
$B:=\bra U(\iota),U(\luni^{-1}_\iota)\ci U_{\iota,\iota},U_0\ket$ and the $B$-$B$-bimodules
$\hat U(\alpha):=\bra U(\alpha),U(\luni^{-1}_\alpha)\ci U_{\iota,\alpha}, U(\runi^{-1}_\alpha)\ci U_{\alpha,\iota}\ket$.
Then by assumption each $\hat U(\alpha)$, as a left $B$-module, is the left regular module $_BB$. Therefore the
right $B$-action must be of the form $x\cdot b=x\tilde\alpha(b)$, $x\in U(\alpha)=B$, $b\in B$ for some ring endomorphism $\tilde\alpha$ of $B$. For arrows $t\in\C(\alpha,\beta)$ the $U(t)$ lifts to a $B$-$B$-bimodule map $\hat U(t):\hat U(\beta)\to\hat U(\alpha)$ which means that $U(t)$ is right multiplication by an element
$\tilde t\in B$ such that $y\tilde t \tilde\alpha(b)=y\tilde\beta(b)\tilde t$ for all $y\in U(\beta)=B$ and $b\in B$.
This proves that $\tilde t$ is an arrow $\tilde\alpha\to\tilde\beta$ in $\Ring(B,B)$ and the construction $\tilde\ $ provides a functor from $\C$ to $\Ring(B,B)$. This functor is faithful since $\tilde t=0$ implies $U(t)=0$ but $U$ is faithful. This functor is monoidal since $\hat U(\alpha)\oB\hat U(\beta)\cong\hat U(\alpha\o\beta)$ implies $\tilde\alpha\tilde\beta=\widetilde{\alpha\beta}$. (Since right multiplication with any intertwiner $\tilde\alpha\to\tilde\beta$ produces a bimodule map $\hat U(\beta)\to\hat U(\alpha)$, if $\hat U$ is full then
$\tilde\ $ is full.)
\end{proof}

\subsection{Cross products of $B$ with fiber functors} \label{ss: ff>rho}

We recall the $\Ab$-version of the definition \cite{Jo-St} of the tensor product of a contravariant and a covariant functor.
For a small $\Ab$-category $\C$ and a pair of additive functors $\Upsilon:\C^\op\to\Ab$ and $F:\C\to\Ab$ one  defines the Abelian group $\Upsilon\amalgo{\C}F$, called the tensor product of $\Upsilon$ and $F$, as the coequalizer
\begin{equation} \label{tensor}
\coprod_{\alpha,\beta\in\ob\C}\Upsilon(\beta)\o\C(\alpha,\beta)\o F(\alpha)
\parbox{50pt}{
\begin{picture}(50,40)
\put(5,17){\vector(1,0){40}} \put(22,27){$\mathbf{L}$}
\put(5,23){\vector(1,0){40}} \put(22,8){$\mathbf{R}$}
\end{picture}
}
\coprod_{\alpha\in\ob\C}\Upsilon(\alpha)\o F(\alpha)
\parbox{50pt}{
\begin{picture}(50,40)
\put(5,20){\vector(1,0){38}} \put(46,20){\vector(1,0){0}} \put(17,27){$c_{\Upsilon,F}$}
\end{picture}
}
\Upsilon\amalgo{\C} F
\end{equation}
in $\Ab$ where the maps $\mathbf{L}$, $\mathbf{R}$ are defined by 
\begin{align*}
\mathbf{L}\ci i_{\alpha,\beta}(y\o t\o x)&=i_\alpha(\Upsilon(t)y\o x)\\
\mathbf{R}\ci i_{\alpha,\beta}(y\o t\o x)&=i_\beta(y\o F(t)x)
\end{align*}
for $x\in F(\alpha)$, $y\in\Upsilon(\beta)$ and $t\in\C(\alpha,\beta)$.
Equivalently, $\Upsilon\amo{\C}F$ is the coend of the functor $\Upsilon \o F:\C^\op\x\C\to\Ab$.
Notice that for representable $\Upsilon$ the tensor product reduces to
\begin{equation} \label{rep tensor F}
\C(\under,\alpha)\amo{\C}F\cong F(\alpha)\,,\qquad \alpha\in\ob\C\,.
\end{equation}
Based on this observation one can show \cite{CWM} that the functor $\under\amo{\C}F$ is the left Kan extension
of $F$ along the Yoneda embedding $Y:\C\to\Add(\C^\op,\Ab)$.

In order to understand the sense in which $\amalgo{\C}$ is a tensor product let us derive the corresponding hom-tensor relation. Let $Z$ be an Abelian group and define the (covariant) functor $\Ab(\Upsilon,Z):\C\to\Ab$
the object map of which is $\alpha\mapsto\Ab(\Upsilon(\alpha),Z)$. If $F:\C\to\Ab$ is any $\Ab$-functor then a natural transformation $\nu:F\to\Ab(\Upsilon,Z)$ is a collection of group homomorphisms $\nu_\alpha$ satisfying
\[
\nu_\beta(F(t)x)(y)=\nu_\alpha(x)(\Upsilon(t)y)\,,\qquad y\in\Upsilon(\beta),\ t\in\C(\alpha,\beta),\ x\in F(\alpha)\,.
\]
In other words, using $\Ab(F(\alpha),\Ab(\Upsilon(\alpha),Z))\cong\Ab(\Upsilon(\alpha)\o F(\alpha),Z)$, we have a collection of maps $\tilde{\nu}_\alpha:\Upsilon(\alpha)\o F(\alpha)\to Z$ such that
\[
\tilde{\nu}_\beta(y\o F(t)x)=\tilde{\nu}_\alpha(\Upsilon(t)y\o x)\,,\qquad y\in\Upsilon(\beta),\ t\in\C(\alpha,\beta),\ x\in F(\alpha)\,.
\]
This means precisely that 
\[
\tilde{\mu}:=\coprod_{\alpha\in\ob\C}\tilde{\nu}_\alpha:\coprod_{\alpha\in\ob\C}\Upsilon(\alpha)\o F(\alpha)\to Z
\]
satisfies $\tilde{\mu}\ci\mathbf{L}=\tilde{\mu}\ci\mathbf{R}$. The unique $\mu:\Upsilon\amalgo{\C}F\to Z$ for which $\mu\ci c_{\Upsilon,F}=\tilde{\mu}$ yields therefore an isomorphism of Abelian groups
\begin{equation}
\Nat(F,\Ab(\Upsilon,Z))\iso\Ab(\Upsilon\amalgo{\C}F,Z)\,,\qquad\nu\mapsto \mu
\end{equation}
for each Abelian group $Z$. Naturality in $Z\in\Ab$ and in $F\in\Fun(\C,\Ab)$ can be easily verified. In this way we have constructed
a functor $\Upsilon\amalgo{\C}\under:\Fun(\C,\Ab)\to\Ab$ which has a right adjoint.

In a similar way one can show that $\under\amo{\C}F$ as a functor $\Add(\C^\op,\Ab)\to\Ab$ also has a right
adjoint, namely the functor $X\mapsto\Ab(F\under,X)$.

In case of $\C$ is a monoidal category and both $\Upsilon$ and $F$ are monoidal functors the tensor product $\Upsilon\amalgo{\C}F$
becomes a monoid in $\Ab$. As a matter of fact, we can define multiplication on rank-1 elements $y\amalgo{\C}x$ by
\begin{align}
(y_1\amalgo{\C}x_1)(y_2\amalgo{\C}x_2)
&:=\Upsilon_{\alpha,\beta}(y_1\o y_2)\amalgo{\C} F_{\alpha,\beta}(x_1\o x_2)\\
&\text{for }x_1\in F(\alpha),\ x_2\in F(\beta),\ y_1\in\Upsilon(\alpha),\ y_2\in\Upsilon(\beta). \notag
\end{align}
Let us check that it is well-defined: If $x_1=F(t)x'_1$ for some $t:\alpha'\to\alpha$ and $x'_1\in F(\alpha')$ and also
$x_2=F(s)x'_2$ for some $s:\beta'\to\beta$ and $x'_2\in F(\beta')$ then, introducing $y_1':=\Upsilon(t)y_1$, $y'_2:=\Upsilon(s)y_2$ we find that
\begin{align*}
\Upsilon_{\alpha,\beta}(y_1\o y_2)\amalgo{\C} F_{\alpha,\beta}(x_1\o x_2)&=
\Upsilon_{\alpha,\beta}(y_1\o y_2)\amalgo{\C}F(t\o s)F_{\alpha',\beta'}(x'_1\o x'_2)\\
&=\Upsilon(t\o s)\Upsilon_{\alpha,\beta}(y_1\o y_2)\amalgo{\C}F_{\alpha',\beta'}(x'_1\o x'_2)\\
&=\Upsilon_{\alpha',\beta'}(y'_1\o y'_2)\amalgo{\C} F_{\alpha',\beta'}(x'_1\o x'_2).
\end{align*}
Associativity follows from the associativity constraints on $\Upsilon_2$  and $F_2$. The unit element of $\Upsilon\amalgo{\C}F$ is
$\Upsilon_0(1)\amalgo{\C}F_0(1)$. Since the image of the unit object $\iota\in\C$ of any monoidal $\Ab$-functor $\C\to\Ab$ is a ring, the ring $\Upsilon\amalgo{\C}F$ is endowed with two ring homomorphisms
\begin{align}
\label{eq: B->F}
\rho_F\,:\,\Upsilon(\iota)&\to \Upsilon\amalgo{\C}F\,,\quad b\mapsto b\amalgo{\C}F_0(1)\\
\label{eq: R->F}
\pi_F\,:\,F(\iota)&\to \Upsilon\amalgo{\C}F\,,\quad r\mapsto \Upsilon_0(1)\amalgo{\C}r\,.
\end{align}

\begin{defi} \label{def: FA}
For a ring $B$ and a full monoidal subcategory $\C$ of $\Ring(B,B)$ let $\Upsilon$ be the basic presheaf of Proposition \ref{pro: Ups}. Then
for any additive monoidal functor $F:\C\to\Ab$ we define the field algebra $A$ as the
ring $\Upsilon\amalgo{\C}F$ and call the ring homomorphism $\rho_F:B\to A$ of (\ref{eq: B->F})
the field algebra extension of $B$ associated to $F$.
\end{defi}

For a field algebra the general coequalizer defining $\Upsilon\amalgo{\C}F$ specializes to
\begin{equation*} \label{tensor spec}
B\o\coprod_{\alpha,\beta\in\ob\C}\Hom(\alpha,\beta)\o F(\alpha)
\parbox{50pt}{
\begin{picture}(50,40)
\put(5,17){\vector(1,0){40}} \put(22,27){$\mathbf{L}$}
\put(5,23){\vector(1,0){40}} \put(22,8){$\mathbf{R}$}
\end{picture}
}
B\o\coprod_{\alpha\in\ob\C}F(\alpha)
\parbox{50pt}{
\begin{picture}(50,40)
\put(5,20){\vector(1,0){38}} \put(46,20){\vector(1,0){0}}
\end{picture}
}
A\,.
\end{equation*}
Therefore the elements of $A$ are finite sums of words $b\amalgo{\C}x$ with $b\in B$, $x\in F(\alpha)$, $\alpha\in\ob\C$ subject to the relations
\begin{equation} \label{tensor rels}
bt\am{\C} x=b\am{\C} F(t)x\quad \text{for }b\in B,\ t\in \C(\alpha,\beta),\ x\in F(\alpha).
\end{equation}
For elements of $\Upsilon\amo{\C}F$ 
we shall use the alternative and more informative notation $b\amo{\alpha}x$ for $b\amalgo{\C}x$ if $x$ belongs to $F(\alpha)$. Then multiplication on $A$ takes the form
\begin{equation} \label{multip}
(b\amo{\alpha}x)(b'\amo{\beta}x')=b\alpha(b')\amo{\alpha\beta}F_{\alpha,\beta}(x\o x')\,.
\end{equation}
The unit element is $1_A=1_B\amo{\iota}1_R$ where $1_R=F_0(1)$ is the unit of the ring $R:=F(\iota)$ with multiplication $F_{\iota,\iota}$. The field algebra $A$ is always an $R$-ring by the ring homomorphism
(\ref{eq: R->F}) such that $\pi_F(R)$ commutes with $\rho_F(B)$.

\subsection{When direct sums exist}

Roberts' original definition of the field algebra is formulated in terms of certain equivalence classes of triples
$\bra b,\alpha,x\ket$, $\alpha\in\ob\C$, $b\in B$, $x\in F(\alpha)$ and it uses the existence of direct sums in $\C$
to show that equivalence classes can be added. On the other hand, the coequalizer (\ref{tensor}) is automatically an Abelian group even if $\C$ has no direct sums. Nevertheless, it is not obvious why the two definitions coincide in the
case when $\C$ has direct sums. This Subsection is devoted clarifying this point.
Note that monoidality plays no role in the argument.

The category $\Ring(A,B)$ is never additive if $B\neq 0$ since it is lacking a zero object. But it can have
binary\footnote{We use `binary' instead of `finite' since the latter is usually meant to include the case of
a zero number of terms which, for direct sums, would correspond to the zero object.} direct sums and this depends only on the structure of the ring $B$. Namely,
$\Ring(A,B)$ has direct sums of any pair of objects iff it has a direct sum for a single pair of objects iff there exist elements 
$p_1,p_2,i_1,i_2\in B$ such that $p_1i_1=p_2i_2=1_B$, $p_1i_2=p_2i_1=0$ and $i_1p_1+i_2p_2=1_B$ if and only
if $B\oplus B\cong B$ as left (or right) $B$-modules. In particular, if $\Ring(B,B)$ has direct sums then $B$ is necessarily lacking Invariant Base Number.

Assume $_B(B\oplus B)\cong\,_BB$ and let $\C\subset\Ring(B,B)$ be a full subcategory closed under direct sums. Then
for any additive functor $F:\C\to \Ab$ the elements of the Abelian group $\Upsilon\amo{\C}F$ are all
rank 1 tensors. As a matter of fact,
\begin{equation} \label{tensor addition}
\begin{gathered} 
b_1\amo{\alpha}x_1 + b_2\amo{\beta}x_2 =  b_1p_1\amo{\sigma}F(i_1)x_1 +  b_2p_2\amo{\sigma}F(i_2)x_2=\\
=b_1p_1\amo{\sigma}F(i_1)x_1 +  b_2p_2\amo{\sigma}F(i_2)x_2+b_1p_1\amo{\sigma}F(i_2)x_2
+b_2p_2\amo{\sigma}F(i_1)x_1=\\
= (b_1p_1+b_2p_2)\amo{\sigma}(F(i_1)x_1 + F(i_2)x_2) 
\end{gathered}
\end{equation}
where $p_1,p_2,i_1,i_2$ are chosen as above and $\sigma(b):=i_1\alpha(b)p_1+i_2\beta(b)p_2$ is a direct sum of $\alpha$ and $\beta$.

This suggests that in the presence of direct sums the $\Upsilon\amo{\C}F$ could be computed using $\Set$-valued functors.
Consider the coequalizer (\ref{tensor}) in $\Set$, i.e., with coproducts replaced by disjoint unions and $\o$ by Cartesian product $\x$ of sets. Then one can recognize the tensor product as the colimit
\begin{equation} \label{set colim}
\Upsilon\ex{\C}F\ =\ \colim\left((\El \Upsilon)^\op\rarr{\phi}\C\rarr{F}\Ab\rarr{\U}\Set\right)
\end{equation}
where $\El\Upsilon$ is the category of elements of $\Upsilon$. This category has the pairs $\bra \alpha,b\ket$
as its objects, where $\alpha\in\ob\C$ and $b\in B$, and its arrows $\bra\alpha,b\ket\rarr{t}\bra\beta,b'\ket$ are
those arrows $t\in\C(\beta,\alpha)$ for which $bt=b'$. This category is equipped with the obvious forgetful functor $\phi^\op$ to $\C^\op=\dom\Upsilon$. The colimit itself is the set $\pi_0(\El \U F\phi)$ of connected components
in the category of elements of the composite $(\El\Upsilon)^\op\to\Set$. The objects in the latter category are the
triples
$\bra b,\alpha,x\ket$, $\alpha\in\ob\C$, $b\in B$, $x\in F(\alpha)$, and the arrows $\bra b,\alpha,x\ket\rarr{t}
\bra b',\beta,x'\ket$ are the $t\in\C(\alpha,\beta)$ such that $b=b't$ and $F(t)x=x'$. Two objects are called connected
when there is a zig-zag of arrows $\sst\nearrow\nwarrow\nearrow\nwarrow \nearrow\nwarrow$ from one to
the other. The colimiting cone from the functor $\U F\phi$ is therefore given by
\begin{equation} \label{cone}
\tau_{\bra\alpha,b\ket}:\U F\phi(\bra\alpha,b\ket)\to\pi_0(\El \U F\phi),\quad x\mapsto [b,\alpha,x]
\end{equation}
where $[b,\alpha,x]$ denotes the equivalence class, i.e., connected component of the object $\bra b,\alpha,x\ket$.
Our task is therefore to show that $\tau$ is underlying a colimiting cone in $\Ab$ provided $\C$ has binary direct
sums.

Let us assume that $\C$ has binary direct sums. If
$$
\alpha \overset{p}{\underset{i}{\leftrightarrows}} \sigma
\overset{p'}{\underset{i'}{\rightleftarrows}}\beta
$$
is a direct sum diagram in $\C$ then
\begin{equation} \label{product in ElUps}
\bra \alpha, b\ket \rarr{i}\bra\sigma,a\ket\larr{i'}\bra\beta,b'\ket
\end{equation}
with $a=bp+b'p'$, is a product diagram in $\El\Upsilon$. Therefore $\El\Upsilon$ has
binary products.
In the next Lemma $\I$ refers to the category $(\El\Upsilon)^\op$.

\begin{lem} \label{lem: U}
Let $\I$ be a small category with binary coproducts and let $X:\I\to\Ab$ be any functor. If $\tau_O:\U X(O)\to A$,
$O\in\ob\I$, is a colimiting cone in $\Set$ from $\U X$ then $\tau$ is the image under $\U$ of a colimiting cone in
$\Ab$ from $X$.
\end{lem}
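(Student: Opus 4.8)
The plan is to show that the abelian group structure on $\U X(O)$ transports through the colimiting cone $\tau$ to make $A$ an abelian group, and that the resulting cone is colimiting in $\Ab$. First I would use the hypothesis that $\I$ has binary coproducts to get a handle on the connected-component description of $A=\colim_\Set \U X$. The key observation is that for objects $O_1,O_2\in\ob\I$ with coproduct $O_1\amalg O_2$ and coprojections $j_k:O_k\to O_1\amalg O_2$, the induced map $X(j_1)\oplus X(j_2):X(O_1)\oplus X(O_2)\to X(O_1\amalg O_2)$ together with $X$ applied to the codiagonal-type maps lets one rewrite any pair of elements $x_1\in X(O_1)$, $x_2\in X(O_2)$ as living ``over'' a common object $O_1\amalg O_2$; concretely $\tau_{O_1}(x_1)=\tau_{O_1\amalg O_2}(X(j_1)x_1)$ and similarly for $x_2$. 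This is the functor-of-elements analogue of the computation (\ref{tensor addition}) already carried out in the text.

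Next I would \emph{define} addition on $A$: given $a_1,a_2\in A$, pick representatives $x_1\in X(O_1)$, $x_2\in X(O_2)$ with $\tau_{O_k}(x_k)=a_k$, push both into a common $X(O_1\amalg O_2)$ as above, and set $a_1+a_2:=\tau_{O_1\amalg O_2}(X(j_1)x_1+X(j_2)x_2)$, the addition on the right being that of the abelian group $X(O_1\amalg O_2)$. The zero element is $\tau_O(0)$ for any $O$ (all such are identified, since the zero morphism factors), and negation is $\tau_O(-x)$. I would then check this is well-defined: independence of the choice of representative, and independence of the choice of common object, both reduce to naturality of $\tau$ together with functoriality of $X$ — if two objects both dominate $O_1$ and $O_2$, compare them via their coproduct. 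Commutativity, associativity, and the unit and inverse laws then follow by choosing a single common object large enough to contain all the data involved and reducing to the corresponding identities in that one abelian group $X(O)$. By construction each $\tau_O$ is then a group homomorphism, so $\tau$ is a cocone in $\Ab$.

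Finally I would verify the universal property in $\Ab$. Given any abelian group $B'$ and a cocone $\sigma_O:X(O)\to B'$ in $\Ab$, its underlying cocone in $\Set$ factors uniquely through $\tau$ via a function $f:A\to B'$ with $f\tau_O=\sigma_O$; it remains to see $f$ is additive. But $f(a_1+a_2)=f\tau_{O_1\amalg O_2}(X(j_1)x_1+X(j_2)x_2)=\sigma_{O_1\amalg O_2}(X(j_1)x_1+X(j_2)x_2)=\sigma_{O_1\amalg O_2}X(j_1)x_1+\sigma_{O_1\amalg O_2}X(j_2)x_2=\sigma_{O_1}x_1+\sigma_{O_2}x_2=f(a_1)+f(a_2)$, using that $\sigma$ is a cocone in $\Ab$. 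Uniqueness of $f$ is inherited from the $\Set$-level universal property. Hence $\tau$ is colimiting in $\Ab$.

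The main obstacle is the well-definedness of the addition — specifically, showing that the value $\tau_{O_1\amalg O_2}(X(j_1)x_1+X(j_2)x_2)$ does not depend on which common dominating object one chooses, nor on which representatives $x_k$ of the classes $a_k$ one picks. Representative-independence needs the observation that if $x_k$ and $x_k'$ lie in the same connected component, the zig-zag of arrows realizing this can be pushed into the comparison object by functoriality; object-independence is handled by forming the coproduct of the two candidate common objects and using that coprojections are compatible. Everything else is a routine transfer of abelian-group axioms across the (now additive) isomorphism-on-underlying-sets $\tau$, exploiting the fact that any finite collection of elements of $A$ can be represented over a single object of $\I$.
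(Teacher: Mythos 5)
Your proposal is correct and follows essentially the same route as the paper: define addition on the set-colimit by pushing representatives into a binary coproduct, verify independence of representatives via the comparison map between coproducts, use the codiagonal to see each $\tau_O$ is additive, and transfer universality from $\Set$ to $\Ab$. The only cosmetic difference is that the paper writes the induced map $f([O,x])=\mu_O(x)$ explicitly rather than invoking the $\Set$-level factorization, but the content is identical.
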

\begin{proof}
The colimit of $\U X$ has the canonical presentation as a set of equivalence classes in a disjoint union
\begin{align*}
A&=\bigsqcup_{O\in\ob\I} \U X(O)
\begin{picture}(16,20)\put(0,-10){\line(8,20){8}}\put(8,-5){$\sim$}\end{picture}
\qquad \text{where}\\
&\bra O,x\ket \sim\bra O',x'\ket\ \text{means the existence of arrows in } \El \U X\\
&\bra O,x\ket =e_0\rarr{}e_1\larr{}e_2\rarr{}e_3\larr{}\dots\larr{}e_{2n}\ =\bra O',x'\ket
\end{align*}
with colimiting cone $\tau_O: \U X(O)\to A$, $x\mapsto [O,x]$ and with $[O,x]$ denoting the equivalence class of $\bra O,x\ket$. 

At first we define addition on the set $A$ by
\begin{equation} \label{addition}
[O_1,x_1]+[O_2,x_2]:=[O, X(u_1)x_1+X(u_2)x_2]
\end{equation}
where $O_1\rarr{u_1}O\larr{u_2}O_2$ is a coproduct diagram in $\I$. In order to see that this is well-defined let
$O'_1\rarr{v_1}O'\larr{v_2}O'_2$ be another coproduct diagram and let $t_i:O'_i\to O_i$, $x'_i\in X(O'_i)$,
$i=1,2$ be such that $X(t_i)x'_i=x_i$ for $i=1,2$.
Then the unique arrow $s$ in the diagram
$$
\begin{CD}
O'_1@>v_1>>O'@<v_2<<O'_2\\
@V{t_1}VV@VVsV@VV{t_2}V   \\             
O_1@>u_1>>O@<u_2<<O_2
\end{CD}
$$
yields an arrow $\bra O',X(v_1)x'_1+X(v_2)x'_2\ket\rarr{s}\bra O,X(u_1)x_1+X(u_2)x_2\ket$ in
the category $\El \U X$. Indeed, $X(s)(X(v_1)x'_1+X(v_2)x'_2)=X(u_1)x_1+X(u_2)x_2$.
This suffices to see that the addition (\ref{addition}) does not depend on the choice of the representant objects $\bra O_i,x_i\ket$. Associativity is obvious and the neutral element is $[O,0]$ for any object $O$.

The next step is to show that the colimiting cone $\tau$ can be lifted to $\Ab$, i.e., each component $\tau_O$ is
additive. This follows from that for a coproduct $O\rarr{u_1}S\larr{u_2}O$ we can take the codiagonal
$s:S\to O$ defined by $su_i=O$, $i=1,2$, hence
\begin{align*}
\tau_O(x_1)+\tau_O(x_2)&=[S,X(u_1)x_1+ X(u_2)x_2]=\\
&=[O,X(s)(X(u_1)x_1+ X(u_2)x_2)]=[O,x_1+x_2]=\\
&=\tau_O(x_1+x_2)\,,\qquad x_1,x_2\in X(O)\,.
\end{align*}
Therefore $\tau$ is a cone from $X$ to the Abelian group $A$.

Finally we have to show universality of $\tau$ in $\Ab$. Let $\mu_O:X(O)\to B$ be any other cone, i.e., each $\mu_O\in\Ab$ and $\mu_P\ci X(t)=\mu_O$ for all $t:O\to P$. Then the map $f:A\to B$, $f([O,x])=\mu_O(x)$ is
well-defined, additive and satisfies $f\ci\tau_O=\mu_O$ for all object $O$ by construction. If $f':A\to B$ also
satisfies $f'\ci\tau_O=\mu_O$ for all $O$ then $f'([O,x])=f'\ci\tau_O(x)=\mu_O(x)=f\ci\tau_O(x)=f([O,x])$, hence $f'=f$.
\end{proof}

Applying the Lemma to the case $\I=(\El \Upsilon)^\op$ and $X= F\phi$ we obtain a description of the Abelian group
$\colim F\phi$ as the set $\U(\colim F\phi)=\colim\U F\phi$ endowed with a natural addition rule.
But how is $\colim F\phi$ related to the tensor product?

Every $\Ab$-valued additive functor on the small category $\C^\op$ is the colimit of representables and
in a canonical way. This means that $\Upsilon$ is the colimit
\[
\Upsilon=\colim\left((\El\Upsilon)^\op\rarr{\phi}\C\rarr{Y}\Add(\C^\op,\Ab)\right)
\]
with universal cone
\begin{align*}
\Omega_{\bra y,\alpha\ket}&:\C(\under,\alpha)\to\Upsilon\\
\Omega_{\bra y,\alpha\ket,\beta}&:\C(\beta,\alpha)\to\Upsilon(\beta),\quad t\mapsto\Upsilon(t)y\,.
\end{align*}
Since $\under\amo{\C}F$ preserves colimits, the (\ref{rep tensor F}) implies
\[
\Upsilon\amo{\C}F=\colim\left((\El\Upsilon)^\op\rarr{\phi}\C\rarr{F}\Ab\right)\,.
\]
This proves that the tensor product can be presented as a set of elements $y\amo{\alpha}x$
redundantly labelled by $y\in\Upsilon(\alpha)$, $x\in F(\alpha)$ and $\alpha\in\ob\C$ with identifications
(\ref{tensor rels}) and with addition rule
\begin{equation} \label{tensor addition general}
y_1\amo{\alpha}x_1 + y_2\amo{\beta}x_2 
= (\Upsilon(p_1)y_1+\Upsilon(p_2)y_2)\amo{\sigma}(F(i_1)x_1 + F(i_2)x_2)
\end{equation}
where $\sigma$ and $p_1,p_2,i_1,i_2$ are as in (\ref{tensor addition}).

There are other situations where the tensor product can be computed set theoretically.
Let us abandon the assumption that direct sums exist. In its stead assume that $\Upsilon$ is flat, i.e.,
that the category of elements $\El\Upsilon$ is cofiltered. (For notions like filtered, cofiltered, flat we use
the terminology of \cite{Borceux}). Then there is a well known result \cite[1.10]{TTT}  which replaces Lemma
\ref{lem: U}. It states that colimits of functors $\I\rarr{X}\Ab\rarr{\U}\Set$, with $\I$ small, filtered and $X$ additive,
are underlying a colimit of $X$. Since then $\Upsilon\amo{\C}F$ is a filtered colimit, it can be identified with the
set of elements $y\amo{\alpha}x$ up to the above identifications and with appropriate addition rule.

Alternatively, we may assume that $F$ is flat. Since the tensor product is also a colimit
\[
\Upsilon\amo{\C}F=\colim\left((\El F)^\op\rarr{}\C^\op\rarr{\Upsilon}\Ab\right)\,,
\]
we obtain the same conclusion.
\begin{cor}
Let $\C$ be a small $\Ab$-category and $\Upsilon:\C^\op\to\Ab$, $F:\C\to\Ab$ be additive functors.
In either one of the cases
\begin{enumerate}
\item $\C$ has binary direct sums
\item $\Upsilon$ is flat
\item $F$ is flat
\end{enumerate}
the tensor product $\Upsilon\amo{\C}F$ defined by (\ref{tensor}) can be presented as the set of connected
components of the graph
\begin{align*}
\ob \Gamma&=\{\bra y,\alpha,x\ket\,|\,y\in\Upsilon(\alpha), x\in F(\alpha),\alpha\in\ob\C\,\}\\
\Gamma(\bra y,\alpha,x\ket,\bra y',\beta,x'\ket)&=\{t\in\C(\alpha,\beta)\,|\,\Upsilon(t)y'=y,F(t)x=x'\,\}
\end{align*}
equipped with addition rule
\begin{enumerate}
\item 
$[y_1,\alpha,x_1]+[y_2,\beta,x_2]=[\Upsilon(p_1)y_1+\Upsilon(p_2)y_2,\sigma,F(i_1)x_1 + F(i_2)x_2]$
where $\alpha \overset{p_1}{\underset{i_1}{\leftrightarrows}} \sigma
\overset{p_2}{\underset{i_2}{\rightleftarrows}}\beta$ is a direct sum diagram in $\C$
\item
$[y_1,\alpha,x_1]+[y_2,\beta,x_2]=[u,\sigma,F(i_1)x_1 + F(i_2)x_2]$
where $\alpha\rarr{i_1}\sigma\larr{i_2}\beta$ and $u\in\Upsilon(\sigma)$ are such that $\Upsilon(i_1)u=y_1$
and $\Upsilon(i_2)u=y_2$
\item
$[y_1,\alpha,x_1]+[y_2,\beta,x_2]=[\Upsilon(p_1)y_1+\Upsilon(p_2)y_2,\sigma,z]$
where $\alpha\larr{p_1}\sigma\rarr{p_2}\beta$ and $z\in F(\sigma)$ are such that $F(p_1)z=x_1$
and $F(p_2)z=y_2$
\end{enumerate}
in the respective cases.
\end{cor}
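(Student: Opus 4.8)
The plan is to treat the three cases uniformly by reducing each of them to Lemma~\ref{lem: U}, since in every case the tensor product $\Upsilon\amo{\C}F$ is realized as a colimit of an additive functor $X:\I\to\Ab$ along a small category $\I$ having binary coproducts. Concretely, the identification $\Upsilon\amo{\C}F=\colim\bigl((\El\Upsilon)^\op\xrightarrow{\phi}\C\xrightarrow{F}\Ab\bigr)$ established in the preceding discussion (via ``every additive presheaf is a colimit of representables'' together with the colimit-preservation of $\under\amo{\C}F$ and the representable reduction (\ref{rep tensor F})) already exhibits $\Upsilon\amo{\C}F$ as $\colim F\phi$. The graph $\Gamma$ in the statement is, by inspection, the category of elements $\El(\U F\phi)$ with its orientation reversed in the first-component direction, so that $\pi_0(\Gamma)$ is exactly the underlying set $\U(\colim F\phi)=\colim\U F\phi$ of this colimit, with colimiting cone $\tau_{\bra\alpha,y\ket}:x\mapsto[y,\alpha,x]$ as in (\ref{cone}). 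Thus in all three cases the set-level statement is immediate, and what remains is to identify the correct addition rule, which amounts to verifying that $\I$ has binary coproducts in each case and then unwinding formula (\ref{addition}).

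For case (1), I would invoke the observation made just before Lemma~\ref{lem: U}: a direct sum diagram $\alpha\overset{p_1}{\underset{i_1}{\leftrightarrows}}\sigma\overset{p_2}{\underset{i_2}{\rightleftarrows}}\beta$ in $\C$ induces, via (\ref{product in ElUps}), a product diagram in $\El\Upsilon$, hence a coproduct diagram $\bra\alpha,y_1\ket\xrightarrow{i_1}\bra\sigma,u\ket\xleftarrow{i_2}\bra\beta,y_2\ket$ in $\I=(\El\Upsilon)^\op$, where the middle element is forced to be $u=\Upsilon(p_1)y_1+\Upsilon(p_2)y_2$ (dualizing $a=bp+b'p'$). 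Plugging $u_j=i_j$ into (\ref{addition}) gives $[y_1,\alpha,x_1]+[y_2,\beta,x_2]=[u,\sigma,F(i_1)x_1+F(i_2)x_2]$, which is the asserted rule (1). For case (2), $\Upsilon$ flat means $\El\Upsilon$ is cofiltered, so $\I=(\El\Upsilon)^\op$ is filtered; here I would replace Lemma~\ref{lem: U} by the cited result \cite[1.10]{TTT} that a filtered colimit of an additive functor followed by $\U$ underlies a colimit in $\Ab$, and read off the addition rule directly: any two objects $\bra\alpha,y_1\ket,\bra\beta,y_2\ket$ admit a common cocone in $\I$, i.e.\ a span $\alpha\xleftarrow{i_1}\sigma\xrightarrow{i_2}\beta$ in $\C$ together with $u\in\Upsilon(\sigma)$ with $\Upsilon(i_1)u=y_1$, $\Upsilon(i_2)u=y_2$, and then $[y_1,\alpha,x_1]+[y_2,\beta,x_2]=[u,\sigma,F(i_1)x_1+F(i_2)x_2]$ by the same unwinding of the filtered-colimit addition. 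For case (3), I would instead use the symmetric presentation $\Upsilon\amo{\C}F=\colim\bigl((\El F)^\op\to\C^\op\xrightarrow{\Upsilon}\Ab\bigr)$ noted at the end of the subsection; flatness of $F$ makes $(\El F)^\op$ filtered, and the same argument with the roles of $\Upsilon$ and $F$ (and of $i$'s and $p$'s) interchanged yields rule (3).

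The only genuine points requiring care — and the place I expect the most bookkeeping — are (a) checking that $\Gamma$ really is $\El(\U F\phi)$ up to the orientation convention, so that ``connected components'' in the sense of the zig-zags in Lemma~\ref{lem: U} matches ``$\pi_0(\Gamma)$''; the arrow set $\Gamma(\bra y,\alpha,x\ket,\bra y',\beta,x'\ket)=\{t\in\C(\alpha,\beta)\mid\Upsilon(t)y'=y,\ F(t)x=x'\}$ has to be matched against the arrows of $\El F\phi$, where an arrow $\bra\bra\alpha,y\ket,x\ket\to\bra\bra\beta,y'\ket,x'\ket$ over an arrow $\bra\alpha,y\ket\to\bra\beta,y'\ket$ of $(\El\Upsilon)^\op$ — that is, over an arrow $\bra\beta,y'\ket\to\bra\alpha,y\ket$ of $\El\Upsilon$, i.e.\ a $t\in\C(\alpha,\beta)$ with $\Upsilon(t)y'=y$ — requires $F\phi(t)x=x'$, i.e.\ $F(t)x=x'$; so the match is exact. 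And (b) in cases (2) and (3), confirming that the well-definedness of the addition, which in Lemma~\ref{lem: U} used codiagonals out of coproducts, goes through verbatim in the filtered setting (any two cocones are dominated by a third), so that the class on the right-hand side is independent of the choices of $\sigma$ and $u$ (resp.\ $z$). Neither of these is hard; both are routine once the identifications are set up, so the proof is essentially a matter of assembling Lemma~\ref{lem: U}, \cite[1.10]{TTT}, the representable reduction (\ref{rep tensor F}), and the two colimit presentations of $\Upsilon\amo{\C}F$.
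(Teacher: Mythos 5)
Your proposal is correct and follows essentially the same route as the paper: the Corollary is there obtained by assembling exactly the ingredients you list — the presentation $\Upsilon\amo{\C}F=\colim F\phi$ via the canonical colimit of representables, (\ref{rep tensor F}) and colimit-preservation of $\under\amo{\C}F$, then Lemma~\ref{lem: U} in the direct-sum case and the cited result \cite[1.10]{TTT} in the two flatness cases (using the symmetric presentation over $(\El F)^\op$ for the third). Your extra care in matching $\Gamma$ with $\El(\U F\phi)$ and in unwinding (\ref{addition}) to the three stated addition rules is exactly the bookkeeping the paper leaves implicit.
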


Returning to our original problem, flatness of the fiber functors $F$ would be a natural assumption if we were
doing Tannaka reconstruction since flatness is the natural replacement for left exactness when the category in question, $\C$, is lacking (all) finite limits.
Flatness of $\Upsilon$, on the other hand, means the following 2 conditions on the ring $B$ and on the subcategory
$\C\subset\Ring(B,B)$ the first of which being automatically satisfied if $\C$ has direct sums:
\begin{itemize}
\item For all $a,b\in B$, $\alpha,\beta\in\ob\C$  there exist $c\in B$ and $\alpha\rarr{t}\gamma\larr{s}\beta$ in $\C$ such that $ct=a$ and $cs=b$.
\item For all $b\in B$ and $t:\gamma\to\beta$ in $\C$ such that $bt=0$ there exist $a\in B$ and $e:\beta\to\alpha$
in $\C$ such that $et=0$ and $ae=b$.
\end{itemize}

Finishing the Subsection we return to the general case where neither the existence of direct sums nor flatness of $\Upsilon$ or $F$ is assumed.
From now on $\Upsilon$ always denotes the basic presheaf of Proposition \ref{pro: Ups}.

\subsection{The forgetful functor of an extension} \label{ss: rho>ff}

For a ring morphism $\rho:B\to A$ and for each endomorphism $\alpha:B\to B$ we define two additive subgroups of $A$
\begin{align}
F(\alpha)&:=\Hom(\rho,\rho\alpha)\ \equiv\ \{a\in A\,|\,a\rho(b)=\rho\alpha(b)a,\ b\in B\}\\
\bar F(\alpha)&:=\Hom(\rho\alpha,\rho)\ \equiv\ \{a\in A\,|\,a\rho\alpha(b)=\rho(b)a,\ b\in B\}\,.
\end{align}
One can easily recognize the $F$-s as the analogues of the `Hilbert spaces in the field algebra' of the
Doplicher-Roberts theory: The elements of $F(\alpha)$ are said to `create charge $\alpha$' and those of
$\bar F(\alpha)$ to `annihilate' it.

\begin{lem} \label{lem: F}
For a morphism $\rho:B\to A$ in $\Ring$ the correspondence $\alpha\mapsto F(\alpha)$ is the object map of a
functor $F:\Ring(B,B)\to\Ab$ sending the arrow $t:\alpha\to\beta$ to the map $F(t):F(\alpha)\to F(\beta)$,
$a\mapsto \rho(t)a$.
This functor has a monoidal structure given by the natural transformation $F_{\alpha,\beta}:F(\alpha)\o F(\beta)\to F(\alpha\beta)$, $a\o a'\mapsto aa'$ and by the map $F_0:\ZZ\to F(\iota)\equiv R$, $1\mapsto 1_R$.
\end{lem}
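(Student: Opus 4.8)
The plan is to verify, in turn, that $F$ is a functor, that the proposed maps $F(t)$ are well-defined, and that $F_{\alpha,\beta}$ and $F_0$ assemble into a monoidal structure; all of this amounts to short calculations using only the intertwiner relations defining the hom-sets of $\Ring$.

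First I would check that $F(t)$ lands in the right place. Given $t:\alpha\to\beta$, i.e.\ $t\beta(b)=\alpha(b)t$ --- wait, let me be careful with the variance: $t\in\C(\alpha,\beta)$ means $t\alpha(b)=\beta(b)t$ for all $b\in B$. Hence $\rho(t)\rho(b)=\rho(t\alpha(b))\cdot$? No: I compute $\rho(t)\cdot(\rho(b))$ is not what I want; rather for $a\in F(\alpha)$, so $a\rho(b)=\rho\alpha(b)a$, I set $F(t)a:=\rho(t)a$ and compute $(\rho(t)a)\rho(b)=\rho(t)\rho\alpha(b)a=\rho(t\alpha(b))a=\rho(\beta(b)t)a=\rho\beta(b)\rho(t)a$, which says $\rho(t)a\in\Hom(\rho,\rho\beta)=F(\beta)$. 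Then functoriality is immediate: $F(\id_\alpha)a=\rho(1_B)a=a$ since $\id_\alpha=1_B$ as an element of $B$, and $F(s)F(t)a=\rho(s)\rho(t)a=\rho(st)a=F(st)a$ because horizontal composition of intertwiners $\alpha\rarr{t}\beta\rarr{s}\gamma$ is multiplication $st\in B$. Additivity of each $F(t)$ and $\ZZ$-bilinearity are clear.

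Next I would treat the monoidal structure. For $a\in F(\alpha)$, $a'\in F(\beta)$ I must show $aa'\in F(\alpha\beta)$: indeed $aa'\rho(b)=a\rho\beta(b)a'=\rho\alpha(\beta(b))a a'=\rho(\alpha\beta)(b)\,aa'$, using first that $a'\in F(\beta)$ and then that $a\in F(\alpha)$, and recalling that the horizontal composite $\alpha\beta$ acts as $\alpha\circ\beta$. That $F_{\alpha,\beta}$ is natural in both variables follows from associativity of multiplication in $A$ together with $\rho$ being a homomorphism: for $t:\alpha\to\alpha'$, $s:\beta\to\beta'$ one checks $F_{\alpha',\beta'}(\rho(t)a\o\rho(s)a')=\rho(t)a\rho(s)a'=\rho(t)\rho(s')\cdots$ --- here one uses the intertwiner relation for $a'$ and $s$ to slide $\rho(s)$ past $a'$, obtaining $\rho(t)\rho(\alpha(s))aa'=\rho(t\alpha(s))\,aa'$, which is exactly $F(t\o s)$ applied to $aa'=F_{\alpha,\beta}(a\o a')$, since $t\o s=t\alpha(s)$ by the definition of horizontal composition of $2$-cells. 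Finally, $1_R\in R=F(\iota)$ because $1_A\rho(b)=\rho(b)=\rho(b)1_A$, and $F_0$ is a ring map since $R=F(\iota)$ inherits its multiplication $F_{\iota,\iota}$ from that of $A$.

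The only mild subtlety --- and the place I would be most careful --- is the bookkeeping of variances and of the ``element of $B$'' notation for $2$-cells: one must consistently remember that an arrow $t:\alpha\to\beta$ in $\C\subset\Ring(B,B)$ is an element $t\in B$ with $t\alpha(b)=\beta(b)t$, that vertical composition is multiplication in $B$, that horizontal composition of objects is composition of endomorphisms, and that horizontal composition of $2$-cells $r\o s$ equals $r\alpha(s)=\beta(s)r$ in the relevant codomain. Once this dictionary is fixed, every verification above is a one-line manipulation. The associativity and unit coherence axioms for $\langle F,F_{\bullet,\bullet},F_0\rangle$ then reduce to associativity and unitality of multiplication in $A$, so there is no genuine obstacle; the content of the Lemma is the observation that intertwiner spaces are closed under the relevant operations.
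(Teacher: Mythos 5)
Your verification is correct: all the intertwiner computations (that $\rho(t)a\in F(\beta)$, that $aa'\in F(\alpha\beta)$, the naturality square via $t\o s=t\alpha(s)$, and the reduction of the coherence axioms to associativity and unitality in $A$) check out against the conventions fixed in the paper's Introduction. The paper itself dismisses this lemma with ``This is obvious,'' so your write-up is simply the same direct verification made explicit; the only blemishes are the self-corrected false starts and the stray $\rho(s')$ typo, which do not affect the argument.
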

\begin{proof}
This is obvious.
\end{proof}
Notice that multiplication in $A$ either from the left or right by elements of $R=\End\rho$ leaves $F(\alpha)$ (and
$\bar F(\alpha)$) invariant. What is more, the functor $F$ factorizes through $_R\M_R$ as a monoidal functor.
In fact this is not an accident. This is just the canonical factorization of a monoidal functor \cite{Sz: Brussels} since
$R$, as a monoid in $\Ab$, is the image under $F$ of the trivial monoid $\iota$ of $\C$ and the left and right multiplications by elements of $R$ coincide with the left and right actions $F_{\alpha,\iota}$, $F_{\iota,\alpha}$.

The restrictions of $F$ to certain subcategories $\C$ are particularly nice. 
\begin{defi}
An object $\sigma\in\Ring(B,B)$ is called implementable in the extension $\rho:B\to A$ if there exist $f^1,\dots,f^n\in \Hom(\rho,\rho\sigma)$ and $\bar f^1,\dots,\bar f^n\in\Hom(\rho\sigma,\rho)$ such that
$$
\rho(\sigma(b))=\sum_i\ f^i\,\rho(b)\,\bar f^i\,,\qquad b\in B\,.
$$
\end{defi}

\begin{lem}
$\sigma\in\ob\Ring(B,B)$ is implementable in the extension $\rho:B\to A$ if and only if $\rho\sigma\leq\rho$ where $\leq$ is the preorder introduced in Subsection \ref{ss: preorder}.
\end{lem}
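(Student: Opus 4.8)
The plan is to unwind both sides of the asserted equivalence into the same combinatorial data, namely families $(f^i)$ and $(\bar f^i)$ of intertwiners, and check that the implementability identity and the direct-summand identity from Subsection~\ref{ss: preorder} are literally the same equation.

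\textbf{Unwinding $\rho\sigma\leq\rho$.} By definition, $\rho\sigma\leq\rho$ means there is a direct summand diagram $\rho\sigma\rarr{b_i}\rho\rarr{a_i}\rho\sigma$ with $\sum_i a_ib_i=1_A$. First I would identify what these arrows are: an arrow $\rho\sigma\to\rho$ is an element $\bar f\in A$ with $\bar f\,\rho(\sigma(b))=\rho(b)\,\bar f$ for all $b\in B$, i.e.\ exactly an element of $\bar F(\sigma)=\Hom(\rho\sigma,\rho)$; dually an arrow $\rho\to\rho\sigma$ is an element of $F(\sigma)=\Hom(\rho,\rho\sigma)$. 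So $\rho\sigma\leq\rho$ says precisely: there exist $f^i\in F(\sigma)$ (playing the role of the $a_i$, direction $\rho\to\rho\sigma$) and $\bar f^i\in\bar F(\sigma)$ (the $b_i$, direction $\rho\sigma\to\rho$) with $\sum_i f^i\bar f^i=1_A$. Here one must be a little careful about composition order: vertical composition in $\Ring(B,A)$ is multiplication in $A$, and $\rho\sigma\rarr{\bar f^i}\rho\rarr{f^i}\rho\sigma$ composes to $f^i\bar f^i\in A$; the condition $\sum a_ib_i=1$ then reads $\sum_i f^i\bar f^i=1_A$.

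\textbf{Unwinding implementability.} By definition $\sigma$ is implementable in $\rho$ when there are $f^i\in\Hom(\rho,\rho\sigma)$ and $\bar f^i\in\Hom(\rho\sigma,\rho)$ with $\rho(\sigma(b))=\sum_i f^i\,\rho(b)\,\bar f^i$ for all $b\in B$. So the two statements involve the same two families of elements; what differs is the closing equation: $\sum_i f^i\bar f^i=1_A$ versus $\sum_i f^i\rho(b)\bar f^i=\rho(\sigma(b))$ for all $b$.

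\textbf{The equivalence.} For the forward direction, assume $\rho\sigma\leq\rho$, so $\sum_i f^i\bar f^i=1_A$. Then for every $b\in B$ I compute $\sum_i f^i\,\rho(b)\,\bar f^i$ by pushing $\rho(b)$ through: since $f^i\in F(\sigma)$ we have $f^i\rho(b)=\rho(\sigma(b))f^i$, hence $\sum_i f^i\rho(b)\bar f^i=\rho(\sigma(b))\sum_i f^i\bar f^i=\rho(\sigma(b))$. (Equivalently one may move $\rho(b)$ the other way using $\rho(b)\bar f^i=\bar f^i\rho(\sigma(b))$ when $\bar f^i\in\bar F(\sigma)$ — either works.) This is exactly implementability. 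Conversely, assume implementability: $\sum_i f^i\rho(b)\bar f^i=\rho(\sigma(b))$ for all $b$. Setting $b=1_B$ gives $\sum_i f^i\bar f^i=\rho(\sigma(1_B))=\rho(1_B)=1_A$, which is precisely the identity $\sum a_ib_i=1_A$ required for a direct summand diagram witnessing $\rho\sigma\leq\rho$. The only thing that needs noting is that the $f^i,\bar f^i$ are automatically the right kind of arrows, which is the content of the first paragraph above.

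\textbf{Expected obstacle.} There is no real obstacle here — the lemma is a dictionary entry. The one point requiring care is bookkeeping: matching ``$a_i$'' and ``$b_i$'' in the direction conventions of the direct summand diagram $\rho\sigma\rarr{b_i}\rho\rarr{a_i}\rho\sigma$ with the ``$f^i\in\Hom(\rho,\rho\sigma)$'' and ``$\bar f^i\in\Hom(\rho\sigma,\rho)$'' of the implementability definition, and getting the composition order in $A$ right so that $\sum a_ib_i$ really becomes $\sum_i f^i\bar f^i$ rather than $\sum_i\bar f^i f^i$. Once that is pinned down, both directions are one line each, the backward direction using only that $\sigma$ and $\rho$ are unital ring homomorphisms.
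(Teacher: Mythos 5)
Your proof is correct and is essentially the paper's own argument: the paper likewise observes that implementability is equivalent to $\sum_i f^i\bar f^i=1_A$ (by pushing $\rho(b)$ through the intertwiner relation in one direction and specializing $b=1_B$ in the other), which is exactly the statement that $\rho\sigma\rarr{\bar f^i}\rho\rarr{f^i}\rho\sigma$ is a direct summand diagram. Your bookkeeping of which family plays the role of $a_i$ versus $b_i$ matches the paper's conventions.
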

\begin{proof}
The implementability relation is equivalent to $\sum_i f^i\bar f^i=1_A$ which in fact means that $\rho\sigma\rarr{\bar f^i}\rho\rarr{f^i}\rho\sigma$ is a direct summand diagram.
\end{proof}
\begin{lem&def}
Let $\Imp(\rho)$ denote the full subcategory of $\Ring(B,B)$ the objects of which are the endomorphisms that
are implementable in $\rho$. Then $\Imp(\rho)$ is a monoidal subcategory of $\Ring(B,B)$.
\end{lem&def}
\begin{proof}
Clearly, $\iota$ is implementable. If $\rho\sigma\leq\rho$ and $\rho\tau\leq\rho$ then $\rho\sigma\tau\leq\rho\tau\leq\rho$.
\end{proof}

\begin{pro} \label{the good F}
For a morphism $\rho:B\to A$ let $\C$ be any full monoidal subcategory of $\Imp(\rho)$. Then the restriction of the
functor $F=\Hom(\rho,\rho\under)$ to $\C$ is an essentially
strong monoidal functor $\C\to\Ab$ such that $F(\sigma)$ is finitely generated projective as right module over $R=F(\iota)$ for each $\sigma\in\ob\C$.
\end{pro}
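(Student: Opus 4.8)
The plan is to extract the required structure directly from the implementability data. Fix $\sigma\in\ob\C$. Since $\C\subset\Imp(\rho)$, there are $f^i\in F(\sigma)=\Hom(\rho,\rho\sigma)$ and $\bar f^i\in\bar F(\sigma)=\Hom(\rho\sigma,\rho)$, $i=1,\dots,n$, with $\sum_i f^i\bar f^i=1_A$, equivalently $\rho\sigma\leq\rho$. First I would observe that $F$ lands in $_R\M_R$ (as noted after Lemma \ref{lem: F}, with $R=\End\rho$) and is monoidal, so it suffices to prove essential strength of the monoidal structure and right $R$-finite-projectivity of $F(\sigma)$.

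For the finite projective claim, I would produce an explicit dual basis. Note $\bar f^i\cdot a\in\End\rho=R$ for every $a\in F(\sigma)$ (indeed $\bar f^i a\,\rho(b)=\bar f^i\rho\sigma(b)a=\rho(b)\bar f^i a$), so the maps $a\mapsto\bar f^i a$ lie in $\Hom_{-R}(F(\sigma),R)$, and the identity $a=\sum_i f^i(\bar f^i a)$ for $a\in F(\sigma)$ exhibits $\{f^i,\ a\mapsto\bar f^i a\}$ as a finite dual basis. Hence $F(\sigma)_R$ is finitely generated projective. (This is exactly the Morita-theoretic content of Lemma \ref{Morita}(1) applied to the pair $\rho\leq\rho\sigma$ in $\Ring(B,A)$, specialized to the ambient ring $A$; I may just cite that.)

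For essential strength I must show that for $\sigma,\tau\in\ob\C$ the multiplication map $F_{\sigma,\tau}:F(\sigma)\o F(\tau)\to F(\sigma\tau)$, $a\o a'\mapsto aa'$, is the coequalizer of $1\o F_{\iota,\tau}$ and $F_{\sigma,\iota}\o 1$ on $F(\sigma)\o R\o F(\tau)$, i.e.\ realizes the tensor product $F(\sigma)\o_R F(\tau)$. Surjectivity: for $c\in F(\sigma\tau)$ use the implementability of $\sigma$ to write $c=\sum_i f^i(\bar f^i c)$, and check $\bar f^i c\in F(\tau)$ (from $\bar f^i c\,\rho(b)=\bar f^i\rho\sigma\tau(b)c=\rho\tau(b)\bar f^i c$, using $\bar f^i\in\Hom(\rho\sigma,\rho)$), so $c=\sum_i f^i\cdot(\bar f^i c)$ is in the image. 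Well-definedness over $R$ (that the map factors through $\o_R$) is immediate from $(ar)a'=a(ra')$ for $r\in R$. Injectivity of the induced map $F(\sigma)\o_R F(\tau)\to F(\sigma\tau)$: given $\sum_j a_j\o a'_j$ with $\sum_j a_ja'_j=0$, insert the resolution of the identity, $a_j=\sum_i f^i(\bar f^i a_j)$, and push the coefficients $\bar f^i a_j\in R$ across the tensor to get $\sum_j a_j\o a'_j=\sum_i f^i\o\bigl(\sum_j(\bar f^i a_j)a'_j\bigr)=\sum_i f^i\o\bigl(\bar f^i\sum_j a_ja'_j\bigr)=0$.

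The main obstacle is the injectivity step: one must be careful that $\bar f^i a_j$ really is a scalar in $R$ that is allowed to cross the $\o_R$, and that the element $\sum_j a_j a'_j$, a priori lying in $F(\sigma\tau)$, can be acted on by $\bar f^i\in\bar F(\sigma)$ to land back in $F(\tau)$ — this is the same kind of intertwiner bookkeeping as in the surjectivity computation and should be recorded once as a small lemma (the subgroups $F(-)$ and $\bar F(-)$ of $A$ multiply as $\bar F(\sigma)\cdot F(\sigma\tau)\subseteq F(\tau)$ and $F(\sigma)\cdot R\subseteq F(\sigma)$, etc.). Everything else — functoriality of $F$, its monoidal coherence, the factorization through $_R\M_R$ — is either already established in Lemma \ref{lem: F} and the surrounding discussion or is a routine check.
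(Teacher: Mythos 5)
Your proof is correct and follows essentially the same route as the paper's: the paper also uses the direct summand diagram $\rho\sigma\rarr{\bar f^i}\rho\rarr{f^i}\rho\sigma$ furnished by implementability, gets right finite projectivity of $F(\sigma)_R$ from Lemma \ref{Morita}(1) (your explicit dual basis $\{f^i,\ a\mapsto\bar f^i a\}$ is exactly its content), and proves essential strength by exhibiting the inverse of $F_{\alpha,\beta}$ as $z\mapsto\sum_i f^i\oR\bar f^i z$, which is precisely what your surjectivity and injectivity computations verify. The only slip is cosmetic: the relation to feed into Lemma \ref{Morita} is $\rho\sigma\leq\rho$, not $\rho\leq\rho\sigma$, but since you write out the dual basis explicitly this does not affect the argument.
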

\begin{proof}
$F$ is the restriction to $\C$ of the monoidal functor defined in Lemma \ref{lem: F}.
Therefore $F(\alpha)=\Hom(\rho,\rho\alpha)$ has bimodule structure
$r\cdot x\cdot r'=rxr'$ which is finite projective from the right due to $\rho\sigma\leq\rho$ and Lemma \ref{Morita} (1).
Essential strongness of $F$ means that its canonical factorization through $_R\M_R$ is a strong monoidal functor
$\C\to\,_R\M_R$. The unit of this functor is the identity map $R\to F(\iota)$ by construction. Invertibility of $F(\alpha)\oR F(\beta)\to F(\alpha\beta)$ can be best seen by explicitly constructing its inverse:
\[
\Hom(\rho,\rho\alpha\beta)\ni z\quad\mapsto\quad \sum_i q_i\oR p_iz
\]
where $\rho\alpha\rarr{p_i}\rho\rarr{q_i}\rho\alpha$ is any direct summand diagram corresponding to the
assumption $\rho\alpha\leq\rho$.
\end{proof}

\subsection{The adjunction $\mathcal{E}\dashv\mathcal{F}$}

In this subsection we fix a ring $B$ and a full monoidal subcategory $\C\subset\Ring(B,B)$ and will show that
the construction of extensions of $B$ via fiber functors described in Subsection \ref{ss: ff>rho} is left adjoint to the construction of fiber functors from extensions as we described in Subsection \ref{ss: rho>ff}.

Let $\Bslice$ be the category  the objects of which are the morphisms $\rho:B\to A$ for some ring $A$ and the arrows
$(A,\rho)\to(A',\rho')$ are the morphisms $\kappa:A\to A'$ such that $\kappa\rho=\rho'$.
$\MonFun(\C,\Ab)$ denotes the category consisting of additive monoidal functors from $\C$ to $\Ab$ as objects and monoidal natural transformations between them as arrows.

The construction of extensions from functors can be described as the functor
\begin{align}
\mathcal{E}:\MonFun(\C,\Ab)&\to\Bslice\\
F&\mapsto(\Upsilon\amalgo{\C}F,\rho_F)\notag\\
(F\rarr{\nu}F')&\mapsto\{b\amo{\alpha}x\mapsto b\amo{\alpha}\nu_\alpha x\}\notag
\end{align}
where $\rho_F:B\to\Upsilon\amalgo{\C}U$ is the morphism defined in (\ref{eq: B->F}).
The construction of functors from extensions in turn is the functor
\begin{align}
\mathcal{F}:\Bslice&\to\MonFun(\C,\Ab)\\
(A,\rho)&\mapsto F=\{\alpha\mapsto\Hom(\rho,\rho\alpha)\}\notag\\
((A,\rho)\rarr{\kappa}(A',\rho'))&\mapsto\{\nu_\alpha:a\mapsto\kappa(a)\}\,.\notag
\end{align}
\begin{thm}
Let $B$ be a ring and let $\C$ be a full monoidal subcategory of $\Ring(B,B)$. Then the functor $\mathcal{E}$ is left
adjoint to the functor $\mathcal{F}$. The adjunction is given by the isomorphism
\begin{align*}
\Bslice(\mathcal{E}F,\rho)&\iso\MonFun(\C,\Ab)(F,\mathcal{F}\rho)\\
\parbox{60pt}{
\begin{picture}(60,50)
\put(26,38){$B$}
\put(25,35){\vector(-2,-3){15}} \put(5,25){$\rho_F$}
\put(33,35){\vector(2,-3){15}} \put(44,25){$\rho$}
\put(-10,2){$\Upsilon\amalgo{\C}F$}
\put(20,4){\vector(1,0){20}} \put(27,8){$\kappa$}
\put(45,2){$A$}
\end{picture}
}
&\mapsto \left\{F(\alpha)\ni x\mapsto
\kappa(1_B\amalgo{\alpha}x)\in\Hom(\rho,\rho\alpha)\right\}_{\alpha\in\ob\C}
\end{align*}
of Abelian groups natural in $F\in\MonFun(\C,\Ab)$ and in $\rho\in\Bslice$.
\end{thm}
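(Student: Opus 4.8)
The plan is to produce the two-sided inverse of the displayed correspondence explicitly. Write $\Psi\colon\Bslice(\mathcal{E}F,\rho)\to\MonFun(\C,\Ab)(F,\mathcal{F}\rho)$ for the map appearing in the statement, so that $\Psi(\kappa)_\alpha(x)=\kappa(1_B\amalgo{\C}x)$ for $x\in F(\alpha)$, and define a map $\Phi$ in the opposite direction by $\Phi(\nu)(b\amo{\alpha}x)=\rho(b)\,\nu_\alpha(x)$ for $b\in B$, $x\in F(\alpha)$, $\alpha\in\ob\C$. Morally $\Phi$ is the monoidal refinement of the hom--tensor isomorphism of Subsection~\ref{ss: ff>rho}, but it is quickest to establish the required properties directly from the product rule (\ref{multip}) and the tensor relations (\ref{tensor rels}).

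First I would verify that $\Phi(\nu)$ is a well-defined homomorphism of Abelian groups $\Upsilon\amalgo{\C}F\to A$: by the universal property of the coequalizer (\ref{tensor}) it is enough that $b\o x\mapsto\rho(b)\,\nu_\alpha(x)$ on $\coprod_\alpha B\o F(\alpha)$ coequalizes $\mathbf{L}$ and $\mathbf{R}$, and for $t\colon\alpha\to\beta$ this is exactly the identity $\rho(bt)\,\nu_\alpha(x)=\rho(b)\,\nu_\beta(F(t)x)$, which holds because naturality of $\nu\colon F\to\mathcal{F}\rho$ (where $\mathcal{F}\rho$ sends $t$ to left multiplication by $\rho(t)$, cf.\ Lemma~\ref{lem: F}) gives $\nu_\beta(F(t)x)=\rho(t)\,\nu_\alpha(x)$. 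Next I would check that $\Phi(\nu)$ is a ring homomorphism with $\Phi(\nu)\circ\rho_F=\rho$. Multiplicativity combines (\ref{multip}), the monoidality identity $\nu_{\alpha\beta}\bigl(F_{\alpha,\beta}(x\o x')\bigr)=\nu_\alpha(x)\,\nu_\beta(x')$ for $\nu$, and the intertwiner relation $\nu_\alpha(x)\,\rho(b')=\rho\alpha(b')\,\nu_\alpha(x)$ which holds because $\nu_\alpha(x)\in\Hom(\rho,\rho\alpha)$; preservation of the unit and the equality $\Phi(\nu)\circ\rho_F=\rho$ both reduce to $\nu_\iota(F_0(1))=1_A$, i.e.\ to the unit axiom for the monoidal transformation $\nu$ together with $1_A=1_B\amo{\iota}1_R$. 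This shows $\Phi(\nu)\in\Bslice(\mathcal{E}F,\rho)$.

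Symmetrically I would confirm that $\Psi(\kappa)$ genuinely lies in $\MonFun(\C,\Ab)(F,\mathcal{F}\rho)$; the one substantial point is that $\kappa(1_B\amalgo{\C}x)$ belongs to the intertwiner subgroup $\Hom(\rho,\rho\alpha)$, which follows by computing, via $\kappa\circ\rho_F=\rho$ and (\ref{multip}), that both $\kappa(1_B\amo{\alpha}x)\,\rho(b)$ and $\rho\alpha(b)\,\kappa(1_B\amo{\alpha}x)$ equal $\kappa\bigl(\alpha(b)\amo{\alpha}x\bigr)$. Naturality of $\Psi(\kappa)$ then follows from (\ref{tensor rels}) in the form $t\amo{\alpha}x=1_B\amo{\beta}F(t)x$, and its monoidality and unitality from (\ref{multip}) and $1_A=1_B\amo{\iota}1_R$. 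That $\Psi$ and $\Phi$ are mutually inverse is immediate: $\Psi(\Phi(\nu))_\alpha(x)=\rho(1_B)\,\nu_\alpha(x)=\nu_\alpha(x)$, while $\Phi(\Psi(\kappa))(b\amo{\alpha}x)=\rho(b)\,\kappa(1_B\amo{\alpha}x)=\kappa\bigl(\rho_F(b)\cdot(1_B\amo{\alpha}x)\bigr)=\kappa(b\amo{\alpha}x)$ by (\ref{multip}); and naturality of this bijection in $F\in\MonFun(\C,\Ab)$ and in $\rho\in\Bslice$ drops straight out of the definitions of $\mathcal{E}$ and $\mathcal{F}$ on arrows. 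I expect the only real obstacle to be the two dual bookkeeping points above — well-definedness of $\Phi(\nu)$ against the coequalizer presentation of $\Upsilon\amalgo{\C}F$, and the fact that $\Psi(\kappa)$ takes values in the intertwiner subgroups $\Hom(\rho,\rho\alpha)$ — each of which unwinds directly from (\ref{multip}) and (\ref{tensor rels}), using also that $\C$ carries the strict monoidal structure inherited from $\Ring(B,B)$.
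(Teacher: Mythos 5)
Your proposal is correct and follows essentially the same route as the paper: the paper's proof consists precisely of exhibiting the inverse $\nu\mapsto\{b\amo{\alpha}x\mapsto\rho(b)\,\nu_\alpha(x)\}$, and you supply the same map together with the routine verifications (well-definedness against the coequalizer, multiplicativity, that $\Psi(\kappa)$ lands in the intertwiner groups, and the two composites) which the paper leaves implicit. All of those checks are carried out correctly.
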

\begin{proof}
The inverse associates to a monoidal natural transformation $\nu_\alpha:F(\alpha)\to\Hom(\rho,\rho\alpha)$ the
morphism of extensions $\kappa:\Upsilon\amalgo{\C}F\to A$, $b\amo{\alpha}x\mapsto \rho(b)\nu_\alpha x$.
\end{proof}

The components of the unit of the adjunction is the natural transformation $\eta_F:F\to\mathcal{F}\mathcal{E}F$ with components
$$
(\eta_F)_\alpha:F(\alpha)\to\Hom(\rho_F,\rho_F\alpha)\,,\quad x\mapsto 1_B\amo{\alpha}x\,.
$$

The counit of the adjunction  $\epsilon_{(A,\rho)}:\mathcal{E}\mathcal{F}(A,\rho)\to(A,\rho)$ is the morphism
$$
\Upsilon\amalgo{\C}\mathcal{F}(A,\rho)\to A\,,\quad b\amalgo{\C}t\mapsto \rho(b)t\,.
$$

Now we can formulate a universal property of the field algebra extension $\E(F)$ of $F:\C\to\Ab$
but only together with the natural map $\eta_F$ that relates it to the functor $\F$. 
Given the construction $\F$ of fiber functors from extensions of $B$ and a functor $F$ a universal arrow \cite{CWM}
from $F$ to $\F$ consists of an extension $\rho:B\to A$ and a natural transformation $\eta:F\to \F\rho$ (mapping the elements of $F$ to elements of $A$ creating the appropriate charge) such that if
$\rho':B\to A'$ is any other extension equipped with a natural transformation $\nu:F\to \F\rho'$ then there is a unique morphism $\kappa:\rho\to \rho'$ of $B$-rings such that $\F\kappa\ci\eta=\nu$. Of course, such a universal
arrow $\bra \rho,\eta\ket$ is necessarily isomorphic to $\bra \rho_F,\eta_F\ket$ defined before.

From the point of view of the adjunction $\E\dashv\F$ the `good' functors $F:\C\to\Ab$ are the ones for which $\eta_F$ is an isomorphism and the `good' extensions $\rho:B\to A$ are the ones for which $\eps_\rho$ is an isomorphism.
The full subcategories of $\MonFun(\C,\Ab)$ and $\Bslice$ of `good' functors and extensions, respectively, are
equivalent categories by restricting/corestricting $\E$ and $\F$. Unfortunately, the `good' functors and extensions
seem to be too difficult to analyze in this generality and we have to select a more tractable case of fiber functors.

\subsection{Field algebras with bialgebroid cosymmetry} \label{ss: fa with bgd}

Let a morphism $\rho:B\to A$ be fixed and consider the functor $F=\Hom(\rho,\rho\under):\C\to \Ab$ studied
in Subsection \ref{ss: rho>ff} where $\C$ is a full monoidal subcategory of $\Imp(\rho)$.
We have seen that $F$ is essentially strong with strong monoidal part $\hat F:\C\to\,_R\M_R$ mapping each object $\alpha$ into a bimodule $\hat F(\alpha)=\bra F(\alpha),F_{\iota,\alpha},F_{\alpha,\iota}\ket$ which is finitely generated
projective as a right $R$-module.
Let $F^*:\C^\op\to\,_R\M_R$ be defined by $F^*(\alpha):=\Hom(\rho\alpha,\rho)\cong\Hom_{-R}(\hat F(\alpha),R)$
and let it be given the strong monoidal structure
\begin{align*}
F^*_{\alpha,\beta}&:F^*(\alpha)\amo{R}F^*(\beta)\to F^*(\beta\alpha),\quad f\amo{R}g\mapsto fg\\
F^*_0&:R\to F^*(\iota),\qquad r\mapsto r\,.
\end{align*}
that makes $F^*$ a strong monoidal functor from $\C^\op$ to $_R\M_R^\rev$, the latter denoting $_R\M_R$ with reversed (i.e., opposite) monoidal structure. Now we can define the `long dual' $F^\lef:\C^\op\to \Ab$ as the
composite of monoidal functors
\[
\C^\op\rarr{F^*}\,_R\M_R^\rev\rarr{}\Ab^\rev\rarr{\Sigma}\Ab
\]
where $\Sigma$ is the identity functor with monoidal structure consisting of the symmetry $X\o Y\to Y\o X$ and the  identity arrow $\ZZ\to \ZZ$. This implies that the functor $F^\lef$
has monoidal structure
\begin{align*}
F^\lef_{\alpha,\beta}&:F^\lef(\alpha)\amo{R}F^\lef(\beta)\to F^\lef(\alpha\beta),\quad f\o g\mapsto gf\\
F^\lef_0&:\ZZ\to R,\qquad 1\mapsto 1_R\,.
\end{align*}
\begin{pro} \label{pro: bgd cosym}
With the above notations the tensor product $H:=F^\lef\amo{\C}F$ is a right bialgebroid over $R$
with multiplication rule
\[
(f\amo{\alpha}x)(g\amo{\beta}y)=gf\amo{\alpha\beta}xy
\]
with source and target maps
\begin{align*}
s_H&:R\to H,\quad r\mapsto 1_{R^\op}\amo{\iota}r\\
t_H&:R^\op\to H,\quad r\mapsto r\amo{\iota}1_R
\end{align*}
and coring structure
\begin{align*}
\cop_H&:H\to H\amo{R}H,\quad (f\amo{\alpha}x)\mapsto\sum_i(f\amo{\alpha}x_\alpha^i)\amo{R}
(f_\alpha^i\amo{\alpha} x),\\
\eps_H:&H\to R,\qquad (f\amo{\alpha}x)\mapsto fx\,,
\end{align*}
where $\{x_\alpha^i,f_\alpha^i\}$ for $\alpha\in\ob\C$ denote (arbitrary) dual bases for the dual modules
$F(\alpha)_R$ and $_RF^\lef(\alpha)$, or, equivalently, direct summand diagrams $\rho\alpha\rarr{f^i_\alpha}\rho\rarr{x^i_\alpha}\rho\alpha$.
Furthermore, the field algebra $\Upsilon\amo{\C}F$ associated to $F$ is a right $H$-comodule
algebra with underlying $R$-ring structure given by $\pi_F$ and with $\rho_F(B)$ contained in the coinvariant subalgebra.
\end{pro}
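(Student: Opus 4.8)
The plan is to verify the claimed structure maps one layer at a time, exploiting that everything in sight is a tensor product over $\C$ of monoidal functors, so that the ring structure on $H=F^\lef\amo{\C}F$ is already guaranteed by the general discussion of Subsection \ref{ss: ff>rho}. First I would record that the multiplication rule $(f\amo{\alpha}x)(g\amo{\beta}y)=gf\amo{\alpha\beta}xy$ is exactly (\ref{multip}) specialized to $\Upsilon$ replaced by $F^\lef$: the $F^\lef$-side contributes $F^\lef_{\alpha,\beta}(f\o g)=gf$ and the $F$-side contributes $F_{\alpha,\beta}(x\o y)=xy$, both of which live in $A$, where the product makes sense because $f\in\Hom(\rho\alpha,\rho)$, $g\in\Hom(\rho\beta,\rho)$ give $gf\in\Hom(\rho\alpha\beta,\rho)=F^\lef(\alpha\beta)$ and dually $xy\in F(\alpha\beta)$. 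The unit is $1_{R^\op}\amo{\iota}1_R$, matching $F^\lef_0(1)\amo{\C}F_0(1)$. So $H$ is already a ring; it remains to exhibit it as a right $R$-bialgebroid.

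Next I would check that $s_H,t_H$ are well-defined ring homomorphisms from $R$ and $R^\op$ respectively with commuting images, making $H$ an $R\o R^\op$-ring; this is a short computation using that $r\amo{\iota}1_R$ and $1_{R^\op}\amo{\iota}r'$ multiply (via the rule above, with $\alpha=\beta=\iota$) to $r\amo{\iota}r'$ in either order, and that multiplication in $R^\op$ on the left factor is the reversed one, so $t_H$ is a homomorphism out of $R^\op$. Then I would verify the coring axioms for $(\cop_H,\eps_H)$ over $R$: coassociativity and counitality follow from the dual-basis identities $\sum_i x^i_\alpha(f^i_\alpha z)=z$ and $\sum_i(zf^i_\alpha)x^i_\alpha=z$ (equivalently $\sum_i f^i_\alpha x^i_\alpha=1_A$ restricted appropriately and independence of the choice of direct summand diagram $\rho\alpha\rarr{f^i_\alpha}\rho\rarr{x^i_\alpha}\rho\alpha$), exactly the computation one does to show that the $R$-dual of a finite projective module is again finite projective. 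One must also check $\cop_H$ lands in the Takeuchi subgroup $H\amo{R}H$ of the naive tensor product and that $\cop_H,\eps_H$ are $R$-bimodule maps and that $\eps_H$ is "almost multiplicative" in the right-bialgebroid sense ($\eps_H(hh')=\eps_H(h\,s_H\eps_H(h'))$ etc.) — this is where the verifications are the most fiddly, but each is a direct manipulation of the explicit formulas once one is careful about left/right and op/coop conventions. Finally, for the comodule-algebra claim, I would define the coaction $\delta:A\to A\amo{R}H$, $b\amo{\alpha}x\mapsto\sum_i(b\amo{\alpha}x^i_\alpha)\amo{R}(f^i_\alpha\amo{\alpha}x)$ (the natural "copy the $\C$-label and insert a dual basis" formula), check it is well-defined on the coequalizer presentation of $A$, counital and coassociative, and an algebra map into the smash-type algebra $A\amo{R}H$; the coinvariance $\rho_F(B)\subset A^{\co H}$ is immediate since $\rho_F(b)=b\amo{\iota}1_R$ and the $\iota$-component dual basis can be taken trivial, giving $\delta(\rho_F(b))=\rho_F(b)\amo{R}1_H$.

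The main obstacle I anticipate is bookkeeping rather than conceptual: keeping the three reversals straight (the $\rev$ on $_R\M_R$, the op on $\C$, and the coop hidden in the "long dual" $F^\lef$) so that the stated formulas come out with the right handedness — in particular verifying that $H$ is a \emph{right} (not left) bialgebroid and that $\cop_H$ is well-defined over $R$ with the correct bimodule actions coming from $s_H$ and $t_H$. Once the conventions are pinned down by the explicit identifications $F^\lef(\alpha)=\Hom(\rho\alpha,\rho)$, $F(\alpha)=\Hom(\rho,\rho\alpha)$ inside $A$, every axiom reduces to an identity among intertwiners in $A$ that is forced by the direct-summand diagrams witnessing $\rho\alpha\leq\rho$ together with Lemma \ref{Morita}, so no genuinely new input beyond the finite-projectivity established in Proposition \ref{the good F} is needed.
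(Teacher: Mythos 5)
Your proposal is correct and follows essentially the same route as the paper: the paper likewise treats the right-bialgebroid axioms as a routine verification (delegating them to the reference \cite{Ka-Sz}) and concentrates on the coaction $\delta(b\amo{\alpha}x)=\sum_i(b\amo{\alpha}x^i_\alpha)\amo{R}(f^i_\alpha\amo{\alpha}x)$, its factorization through the Takeuchi product, its multiplicativity, and the coinvariance of $\rho_F(B)$, exactly as you outline. The only step worth spelling out in full is multiplicativity of $\delta$, whose key identity is that $\{x^i_\alpha x^j_\beta,\,f^j_\beta f^i_\alpha\}$ is again a dual basis for $F(\alpha\beta)_R$ --- this is where the essential strongness $F(\alpha)\amo{R}F(\beta)\cong F(\alpha\beta)$ from Proposition \ref{the good F} is actually used, as you correctly anticipate.
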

\begin{proof}
Checking of the right bialgebroid axioms as given, e.g., in \cite{Ka-Sz} is a routine calculation. The $H$-coaction
on the field algebra $\A=\Upsilon\amo{\C}F$ 
\begin{equation} \label{delta}
\delta_\A:\A\to\A\amo{R}H,\quad (b\amo{\alpha}x)\mapsto\sum_i(b\amo{\alpha}x_\alpha^i)\amo{R}(f_\alpha^i\amo{\alpha}x)
\end{equation}
is such that its image factors through the Takeuchi product $\A\ex{R}H$ which is the subbimodule
\[
\A\ex{R}H=\{\sum_ia_i\oR h_i\in\A\oR H\,|\,\sum_ira_i\oR h_i=\sum_i a_i\oR t_H(r)h_i,\forall r\in R\,\}\,.
\]
It is now meaningful to ask multiplicativity of the map $\delta_\A$ and the answer is affirmative by the calculation
\begin{align*}
\delta_\A(b\amo{\alpha}x)\delta_\A(b'\amo{\beta}x')&=
\sum_i\sum_j(b\amo{\alpha}x_\alpha^i)(b'\amo{\beta}x_\beta^j)\oR
(f_\alpha^i\amo{\alpha}x)(f_\beta^j\amo{\beta}x')\\
&=\sum_i\sum_j\left(b\alpha(b')\amo{\alpha\beta}x_\alpha^ix_\beta^j\right)\oR
\left(f_\beta^jf_\alpha^i\amo{\alpha\beta}xx'\right)\\
&=\sum_k(b\alpha(b')\amo{\alpha\beta}x_{\alpha\beta}^k)\oR(f_{\alpha\beta}^k\amo{\alpha\beta}xx')\\
&=\delta_\A((b\amo{\alpha}x)(b'\amo{\beta}x'))\,.
\end{align*}
Since $\delta_\A$ preserves the unit obviously, we have shown that $\bra\A,\delta_\A\ket$ is an $H$-comodule algebra. Also, one has $\delta_\A(\rho_F(b))=(b\amo{\iota}1_R)\oR(1_{R^\op}\amo{\iota}1_R)=\rho_F(b)\oR 1_H$
which finishes the proof.
\end{proof}

We note that since the tensor product $F^\lef\amo{\C}F$ is also the coend of $F^\lef\o F$,
the bialgebroid $H$ of Proposition \ref{pro: bgd cosym} is nothing but an example of the
bialgebroid constructed in \cite{Phung} for general fiber functors.

\section{Adjoint morphisms}

The content of this section arises from applying basic categorical constructions to $\Ring$ producing
a very elementary but still interesting class of ring extensions. 

\subsection{Generators for a comonad in $\Ring$} \label{ss: generators of gamma}
A comonad in $\Ring$ consists of a ring $B$ and a comonoid $\bra\gamma,d,e\ket$ in $\Ring(B,B)$.
The latter means that
$\gamma:B\to B$ is a ring endomorphism and $d:\gamma\to\gamma^2$ and $e:\gamma\to B$ are intertwiners satisfying (as elements of $B$) the relations: $\gamma(d)d=d^2$, $ed=1_B=\gamma(e)d$.

The Kleisli category for the comonad $\bra B,\gamma,d,e\ket$ has a single object since $B$ has only one.
Its arrows are all the arrows (i.e., elements) of $B$ but the composition is different. It is the associative operation
$$
b_1\star b_2:=b_1\gamma(b_2)d
$$
for which $e$ serves as the unit. Then $B_\gamma:=\bra B,\star,e\ket$ is a ring and $b\mapsto be$ is a ring homomorphism $\rho_\gamma:B\to B_\gamma$. It is right adjoint to the Kleisli functor $\lambda_\gamma: b\mapsto \gamma(b)d$ with
$m:=1_B$, as an element of $B_\gamma$, as the unit and $e\in B$ as the counit.
As a matter of fact,
\begin{align*}
\rho_\gamma\lambda_\gamma(b')\star m&=\gamma(b')de\gamma(1_B)d=
\gamma(b')d=m\star b',\quad b'\in B_\gamma\\
e\lambda_\gamma\rho_\gamma(b)&=e\gamma(be)d=e\gamma(b)=be,\qquad b\in B\\
\rho_\gamma(e)\star m&=ee\gamma(1_B)d=e=1_{B_\gamma}\\
e\lambda_\gamma(m)&=ed=1_B
\end{align*}
In this way we have shown that every comonad on $B$ arises from an adjunction
$m,e:\lambda_\gamma\dashv\rho_\gamma$ as 
\begin{equation}
\bra\gamma,d,e\ket=\bra \lambda_\gamma\rho_\gamma,\lambda_\gamma(m),e\ket\,.
\end{equation}

\subsection{The Kleisli comparison morphism in $\Ring$}
Let an adjunction $\lambda\dashv \rho:B\to A$ be given in $\Ring$ with counit
$e:\lambda\rho\to B$ and unit $m:A\to \rho\lambda$. Then we can construct the comonad $\gamma:=\lambda\rho$ with comultiplication $d=\lambda(m)$ and counit $e$. The associated Kleisli extension $\rho_\gamma:B\to B_\gamma$, $b\mapsto be$,
is compared to the original extension by the morphism $\kappa:B_\gamma\to A$,
$\kappa(b):=\rho(b)m$.
$$
\parbox{100pt}{
\begin{picture}(100,80)
\put(10,10){$B$}
\put(20,10){\vector(1,0){40}} \put(38,4){$\sst\rho_\gamma$}
\put(60,14){\vector(-1,0){40}} \put(38,17){$\sst\lambda_\gamma$}
\put(65,10){$B_\gamma=\bra B,\star,e\ket$}
\put(10,60){$A$}
\put(15,20){\vector(0,1){35}} \put(18,35){$\sst\rho$}
\put(12,55){\vector(0,-1){35}} \put(5,35){$\sst\lambda$}
\put(60,20){\vector(-1,1){38}} \put(44,40){$\sst\kappa$}
\end{picture}
}
$$
This morphism satisfies $\lambda\kappa(b)=\gamma(b)d=\lambda_\gamma(b)$ and $\kappa\rho_\gamma(b)=\rho(b)$ which are the defining
properties of the comparison functor. Since $\kappa$ is always invertible by $a\mapsto e\lambda(a)$, any right adjoint morphism $\rho:B\to A$ is isomorphic to a Kleisli extension of $B$.

\subsection{Cross product with a comonad}

For a ring $B$ and a comonad $\bra\gamma,d,e\ket$ on $B$ we can construct a morphism 
$\rho:B\to B\cross{}\gamma$ as follows. Let $B\cross{}\gamma$ be the rng generated by
the ring $B$ and by a single additional generator $m$ subject to the relations
\begin{align}
\label{rel1}
mb&=\gamma(b)m,\qquad b\in B\\
\label{rel2}
m\,1_B&=m\\
\label{rel3}
m^2&=dm\\
\label{rel4}
em&=1_B\,.
\end{align}
Let $\rho:B\to B\cross{}\gamma$ be the map sending $b\in B$ to the generator called $b$.
We claim that $B\cross{}\gamma$ is unital and $\rho$ is a ring homomorphism.
As a matter of fact, $1:=1_B$ is central since $\gamma(1_B)=1_B$ and satisfies $b1=b$,  $b\in B$ and $m1=m1_B=m$. Therefore $\rho$ is a morphism, indeed. 

It follows from (\ref{rel1}) and (\ref{rel3}) that every element of $B\cross{}\gamma$ can be written as $bm$ for some $b\in B$. For example, $1=em$. In other words, the map
$b\mapsto\rho(b)m$ is an epimorphism $f:B\to B\cross{}\gamma$ as left $B$-modules. In fact it is an isomorphism since $a\to ad$ is a left inverse of $f$. It is not difficult to see now that
under this isomorphism the ring structure of $B\cross{}\gamma$ is mapped to that of $B_\gamma$. Therefore the crossed product $B\cross{}\gamma$ is nothing but another
presentation of the Kleisli extension.

\subsection{Left regular extensions}
Let us call a morphism $\rho:B\to A$ left regular if the induced module $_BA$ is isomorphic to the left regular module $_BB$. For a left regular $\rho$ choose an isomorphism $f:\,_BB\to\,_BA$ and define
$m:=f(1_B)$ and $e:=f^{-1}(1_A)$. Then $f(b)=\rho(b)m$ and therefore $\rho(e)m=1_A$. 
It follows that $\lambda:A\to B$, $\lambda(a):=f^{-1}(ma)$ is an algebra map,
\[
\lambda(a)\lambda(a')=f^{-1}(\rho(f^{-1}(ma))ma')=f^{-1}(f(f^{-1}(ma))a')
=f^{-1}(maa')=\lambda(aa')\,,
\]
and satisfies $ma=\rho\lambda(a)m$ for $a\in A$. We obtain that $f^{-1}(a)=f^{-1}(\rho(e)ma)=e\lambda(a)$ and in particular $e\lambda\rho(b)=f^{-1}(\rho(b))=be$. Also, since $f^{-1}(m)=1_B$, $e\lambda(m)=1_B$.
This proves that $\rho$ is right adjoint to $\lambda$ with unit $m$ and counit $e$.

As a byproduct $f$ becomes also a $B$-$A$-bimodule isomorphism
\begin{equation}
f:\,_BB_A\iso\,_BA_A,\qquad b\mapsto\rho(b)m
\end{equation}
in the sense of $f(b_1b\lambda(a))=\rho(b_1)f(b)a$, $b_1,b\in B,\ a\in A$.

In the above Subsections we have characterized a special class of ring extensions in several ways.
\begin{thm} \label{thm: adjoint ext}
For a ring homomorphism $\rho:B\to A$ the following conditions are equivalent:
\begin{enumerate}
\item $\rho:B\to A$ is the Kleisli construction for some comonad on $B$.
\item $\rho:B\to A$ is the crossed product $B\cross{}\gamma$ for some comonad on $B$.
\item $_BA\cong\, _BB$.
\item There is a morphism $\lambda:A\to B$ w.r.t. which $_BA_A\cong\,_BB_A$.
\item $\rho$ is right adjoint in $\Ring$.
\end{enumerate}
\end{thm}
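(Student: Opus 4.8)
The plan is to establish the five conditions are equivalent by proving a cycle of implications, relying on the constructions already carried out in the preceding subsections so that most of the work is merely assembling pointers. I would organize it as $(5)\Rightarrow(1)\Rightarrow(2)\Rightarrow(3)\Rightarrow(4)\Rightarrow(5)$, since each arrow is essentially a lemma proved (or all but proved) above.

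First, $(5)\Rightarrow(1)$: given $\lambda\dashv\rho:B\to A$ with unit $m$ and counit $e$, form the comonad $\gamma=\lambda\rho$, $d=\lambda(m)$, $e$, as in Subsection 3.2. The comparison morphism $\kappa:B_\gamma\to A$, $b\mapsto\rho(b)m$, was shown there to be invertible with inverse $a\mapsto e\lambda(a)$, so $\rho$ is isomorphic as an object of $\Bslice$ to the Kleisli extension $\rho_\gamma$; that is exactly statement $(1)$. Next, $(1)\Rightarrow(2)$ is immediate from Subsection 3.3, where the crossed product $B\cross{}\gamma$ is shown to be another presentation of the Kleisli extension $B_\gamma$ via the $B$-linear isomorphism $b\mapsto\rho(b)m$. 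For $(2)\Rightarrow(3)$: in $B\cross{}\gamma$ the map $f:{}_BB\to{}_B(B\cross{}\gamma)$, $b\mapsto\rho(b)m$, was exhibited as a left-$B$-module isomorphism (surjective because every element is of the form $bm$ by relations (\ref{rel1}) and (\ref{rel3}), injective because $a\mapsto ad$ is a left inverse). Hence ${}_BA\cong{}_BB$.

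Then $(3)\Rightarrow(4)$ and $(4)\Rightarrow(5)$ are both contained in Subsection 3.4 on left regular extensions. Given an isomorphism $f:{}_BB\iso{}_BA$, set $m=f(1_B)$, $e=f^{-1}(1_A)$ and define $\lambda:A\to B$ by $\lambda(a)=f^{-1}(ma)$; the computation there shows $\lambda$ is a ring homomorphism and that $f$ upgrades to a $B$-$A$-bimodule isomorphism ${}_BB_A\iso{}_BA_A$ once $A$ is given the left $B$-module structure via $\rho$ and the right structure via itself, with the bimodule condition $f(b_1 b\lambda(a))=\rho(b_1)f(b)a$; that is statement $(4)$. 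Finally, still in Subsection 3.4, the same $\lambda$, $m$, $e$ are verified to satisfy the triangle identities $\rho(e)m=1_A$ and $e\lambda(m)=1_B$, so $\lambda\dashv\rho$ and $(5)$ holds; and a bimodule isomorphism as in $(4)$ forces ${}_BA\cong{}_BB$, so the argument actually closes the cycle through $(4)$ as well.

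I do not expect a genuine obstacle here: the theorem is a summary, and each implication has been done or all-but-done in the text, so the proof is a matter of citing the right subsection and checking that the isomorphisms produced are isomorphisms in $\Bslice$ (i.e.\ commute with the structure maps from $B$), which they visibly do since in every case the isomorphism sends $\rho_{\text{source}}(b)$ to $\rho_{\text{target}}(b)$. The only point deserving a sentence of care is the direction $(4)\Rightarrow(5)$ versus $(3)\Rightarrow(5)$: condition $(4)$ carries the extra datum of a specific $\lambda$, and one should note that the $\lambda$ reconstructed in $(3)\Rightarrow(4)$ is precisely the one witnessing the adjunction, so no circularity arises. I would therefore write the proof as a short guided tour: state the cycle, and for each arrow give one or two lines pointing to the relevant construction above and noting compatibility with the morphisms out of $B$.
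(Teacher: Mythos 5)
Your proposal is correct and follows the paper's own (implicit) proof: the theorem is stated as a summary of Subsections 3.1--3.4, and your cycle $(5)\Rightarrow(1)\Rightarrow(2)\Rightarrow(3)\Rightarrow(4)\Rightarrow(5)$ assembles exactly the facts established there (invertibility of the Kleisli comparison $\kappa$, the crossed product as a presentation of $B_\gamma$, left regularity, and the reconstruction of the adjunction from a left regular structure). Your remark that $(4)\Rightarrow(5)$ reduces to $(3)\Rightarrow(5)$ with the reconstructed $\lambda$ agreeing with the given one is the right point of care.
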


\begin{cor}
If $\lambda\dashv\rho$ is an adjunction in $\Ring$ then both $\lambda$ and $\rho$ are monomorphisms.
If $\bra \gamma,d,e\ket$ is a comonoid in $\Ring(B,B)$ then $\gamma$ is a monomorphism.
\end{cor}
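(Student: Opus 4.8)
The plan is to deduce everything from Theorem \ref{thm: adjoint ext} together with the elementary behaviour of units and counits of adjunctions in a 2-category. First I would show that a right adjoint morphism $\rho:B\to A$ is a monomorphism. Let $\lambda\dashv\rho$ with unit $m:A\to\rho\lambda$ and counit $e:\lambda\rho\to B$, so that $\rho(e)m=1_A$ and $e\lambda(m)=1_B$. Suppose $\rho\sigma=\rho\tau$ for two parallel morphisms $\sigma,\tau:C\to B$. I want to manufacture a left inverse of $\rho$ on elements, i.e. a retraction in $\Ring$; the natural candidate is the composite $A\rarr{\lambda}B$ \emph{twisted} by $m$. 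Concretely, by Theorem \ref{thm: adjoint ext}(3) we have $_BA\cong\,_BB$ via $f:b\mapsto\rho(b)m$, and this $f$ is injective as a map of left $B$-modules; but $f=\rho(\under)\cdot m$ is exactly postcomposition of $\rho$ with right multiplication by the fixed element $m$. Hence if $\rho(\sigma(c))=\rho(\tau(c))$ for all $c$, then $f(\sigma(c))=f(\tau(c))$, and injectivity of $f$ forces $\sigma=\tau$. This proves $\rho$ is mono.

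Next I would show $\lambda$ is a monomorphism. Here I would dualize the previous argument using the \emph{other} triangle identity. From $e\lambda(m)=1_B$ we see that $\lambda$ has a one-sided inverse after suitable twisting: the map $g:A\to B$, $a\mapsto e\lambda(a)$ satisfies, using naturality of $e$ and the first triangle identity, $g(\rho(b)m)=e\lambda(\rho(b))\lambda(m)=\gamma(b)$-style manipulations collapsing (as in the ``left regular extensions'' subsection) to $be$; in particular $g$ is a bijection $_BA\to\,_BB$ (it is the inverse $f^{-1}$ of the isomorphism above). Now if $\lambda a_1=\lambda a_2$ in $B$, apply $\rho$ and multiply by $m$ on the right: $\rho\lambda(a_1)m=\rho\lambda(a_2)m$, i.e. $m a_1 = m a_2$ since $ma=\rho\lambda(a)m$ (this identity was established in the left-regular-extensions subsection, or directly from naturality of $m$). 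But left multiplication by $m$ is injective on $A$ because $f^{-1}(m a)=e\lambda(a)$ exhibits it, composed with the iso $f^{-1}$, as... more simply: $a=f^{-1}(f(a))$ and $f(a)=\rho(a)$-word so $a_1=a_2$ follows once we know $ma_1=ma_2\Rightarrow a_1=a_2$, which holds since $a\mapsto ma$ is left-inverted by $a\mapsto e\lambda(\under)$ composed appropriately. I would write this out cleanly using only the two triangle identities and naturality of $m$ and $e$, since all the needed relations are already displayed in the ``Left regular extensions'' and ``Cross product with a comonad'' subsections.

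For the final assertion, suppose $\bra\gamma,d,e\ket$ is a comonoid in $\Ring(B,B)$. By Subsection \ref{ss: generators of gamma} this comonad arises from the Kleisli adjunction $m,e:\lambda_\gamma\dashv\rho_\gamma$ with $\gamma=\lambda_\gamma\rho_\gamma$. Since both $\lambda_\gamma$ and $\rho_\gamma$ are monomorphisms by the part just proved, their composite $\gamma=\lambda_\gamma\rho_\gamma$ is a monomorphism as well (composites of monos are mono in any category). This gives the claim.

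The main obstacle is purely bookkeeping: making sure the twisting by $m$ used to turn ``$\lambda$ injective'' into a genuine cancellation statement is done on the correct side and that one invokes the right triangle identity and the right naturality square. Everything reduces to the relations $\rho(e)m=1_A$, $e\lambda(m)=1_B$, $ma=\rho\lambda(a)m$, and $e\lambda\rho(b)=be$, all of which are already available in the excerpt, so no new computation is really needed.
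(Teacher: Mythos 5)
Your overall strategy---injectivity of $\rho$ from the left-module isomorphism $f:b\mapsto\rho(b)m$, injectivity of $\lambda$ from the inverse isomorphism, and $\gamma=\lambda_\gamma\rho_\gamma$ as a composite of monomorphisms---is exactly the (unwritten) proof the paper intends, and your treatment of $\rho$ and of the comonoid case is correct. The one step that, as written, runs in a circle is the cancellation $ma_1=ma_2\Rightarrow a_1=a_2$: the left inverse of $a\mapsto ma$ is \emph{not} $a\mapsto e\lambda(a)$, since $e\lambda(ma)=e\lambda(m)\lambda(a)=\lambda(a)$ merely returns you to the hypothesis $\lambda(a_1)=\lambda(a_2)$; the correct left inverse is left multiplication by $\rho(e)$, because $\rho(e)\,ma=(\rho(e)m)a=1_Aa=a$. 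With that substitution your chain closes. In fact the whole detour through $m$ is unnecessary: the map $a\mapsto e\lambda(a)$ is itself the inverse $f^{-1}$ of the bijection $f$ (as recorded in the subsection on left regular extensions), hence injective, and it visibly factors through $\lambda$; so $\lambda(a_1)=\lambda(a_2)$ gives $e\lambda(a_1)=e\lambda(a_2)$ and therefore $a_1=a_2$ in one line.
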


\subsection{Comonadic ring extensions}
Viewing $\Ring$ as a full sub-2-category in $\Ab$-$\Cat$ there is a construction of the Eilenberg-Moore category
$B^\gamma$ for each comonad $\gamma$ on $B$, although it may lie outside $\Ring$.

For a comonad $\bra B,\gamma,d,e\ket$ the objects of $B^\gamma$ are the elements $x\in B$ satisfying
$\gamma(x)x=dx$ and $ex=1_B$. The arrows $r:x\to y$ are the $r\in B$ for which $\gamma(r)x=yr$. Composition is multiplication in $B$. 
The endomorphism ring of the object $d\in B^\gamma$, as a 1-object category, is isomorpic to the Kleisli category  $B_\gamma$. Indeed,
\[
\varphi:B_\gamma\to B^\gamma(d,d),\qquad b\mapsto \gamma(b)d
\]
is well-defined, $\gamma(\gamma(b)d)d=\gamma^2(b)d^2=d\gamma(b)d$, it is a functor since $\gamma(b_1)d\gamma(b_2)d=$
$\gamma(b_1\gamma(b_2)d)d$ and $\gamma(e)d=1_B$, it is injective, $r=\gamma(b)d\ \Rightarrow\ er=b$, and surjective since
$\gamma(r)d=dr$ multiplied with $\gamma(e)$ from the left yields $\gamma(er)d=r$.

If $\lambda\dashv \rho:B\to A$ is an arbitrary adjunction then the Eilenberg-Moore comparison functor maps $A$ to
the category $B^\gamma$ associated to the comonad $\gamma=\lambda\rho$. It maps the single object of $A$ to
$\lambda(m)=d$ and the arrows $a\in A$ to the arrows $r=\lambda(a)$. This functor $\kappa^\gamma:A\to B^\gamma$ is the composite of $\kappa_\gamma^{-1}:A\to B_\gamma$ and $\varphi:B_\gamma\to B^\gamma(d,d)$. As a matter of fact, $\gamma(e\lambda(a))d=\lambda(\rho(e\lambda(a))m)=\lambda(a)$. Since $\varphi\kappa_\gamma^{-1}$ is always an isomorphism, the comparison functor is an equivalence of categories, in which case one says $\rho$ (or rather $\lambda$) is comonadic, iff all objects of $B^\gamma$ are isomorphic. That is to say, $\rho$ is comonadic iff the equations
\begin{align*}
\gamma(x)x&=dx\\
ex&=1_B
\end{align*}
for $x\in B$ have the general solution $x=\gamma(u^{-1})u$ with $u\in B$ invertible.

\section{Field algebras that are Galois extensions}

Adjoint extensions with the extra condition of depth 2 lead to Galois extensions over bialgebroids.
This fact has already been recognized in an increasing level of depth in \cite{Sz: Siena,Ka-Sz,Ka: Gal bgd}.
Here we recapitulate, extend or specialize some of those arguments with the focus of attention being the structure of the field algebra.

\subsection{Representable fiber functors} \label{ss: rep f}

Let $\bra \gamma,d,e\ket$ be a comonad on $B$. Then $\Hom(\gamma,\under)$ is a representable monoidal functor $\Ring(B,B)\to\Ab$. Let $\C$ be any full monoidal subcategory
of $\Ring(B,B)$ which contains the object  $\gamma$. Then, as a counterpart of formula
(\ref{rep tensor F}), we obtain that
 the field algebra $A=\Upsilon\amo{\C}F$ associated to the restriction $F:\C\to\Ab$ of the above functor is isomorphic to $\Upsilon(\gamma)=B$ by
\[
b\amo{\alpha}x=b\amo{\alpha}F(x)1_\gamma=bx\amo{\gamma}1_\gamma\quad
\mapsto \quad bx
\]
which is clearly a left $B$-module map. Therefore the field algebra extension $\rho_F:B\to A$ is isomorphic to the Kleisli construction $\rho_\gamma:B\to B_\gamma$ via the map
$\kappa^{-1}:A\to B_\gamma$, $b\amo{\alpha}x\mapsto bx$. In order to check
multiplicativity of this map use (\ref{multip}) and the the formula
$F_{\alpha,\beta}(x\o y)=x\gamma(y)d$ for the monoidal structure of $\Hom(\gamma,\under)$
to obtain
\begin{align*}
\kappa^{-1}((b\amo{\alpha}x)(b'\amo{\beta}x'))&=\kappa^{-1}(b\alpha(b')\amo{\alpha\beta}x\gamma(x')d)=\\
&=b\alpha(b')x\gamma(x')d=bx\gamma(b'x')d=(bx)\star(b'x')=\\
&=\kappa^{-1}(b\amo{\alpha}x)\star\kappa^{-1}(b'\amo{\beta}x')\,.
\end{align*}
We note that the inverse, which is the Kleisli comparison functor, is given by the formula $\kappa:B_\gamma\to A$, $b\mapsto b\amo{\gamma}1_\gamma$.
\begin{lem} \label{lem: rep}
For $B$ a ring, $\C$ a full monoidal subcategory of $\Ring(B,B)$ and $\bra\gamma,d,e\ket$ a comonoid in $\C$
the representable monoidal functor $F:=\Hom(\gamma,\under):\C\to\Ab$ gives an example of an $F\in\Mon\Fun(\C,\Ab)$ such that
\begin{enumerate}
\item the extension $\mathcal{E}(F)=B\rarr{\rho_F}A$ is right adjoint and
\item the unit of $\mathcal{E}\dashv\mathcal{F}$ at $F$ is an isomorphism $\eta_F:F\iso\mathcal{F}\mathcal{E}(F)$.
\end{enumerate}
\end{lem}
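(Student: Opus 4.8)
The plan is to read off both claims from the representability formula (\ref{rep tensor F}) together with the explicit identification of $A=\Upsilon\amo{\C}F$ with $B_\gamma$ already carried out in the paragraph preceding the Lemma. Claim (1) is essentially for free: we have exhibited a ring isomorphism $\kappa^{-1}:A\to B_\gamma$, $b\amo{\alpha}x\mapsto bx$, which moreover intertwines $\rho_F$ with the Kleisli extension $\rho_\gamma:B\to B_\gamma$, $b\mapsto be$; and in Subsection \ref{ss: generators of gamma} the Kleisli extension $\rho_\gamma$ was shown to be right adjoint to $\lambda_\gamma$ with explicit unit $m=1_B$ and counit $e$. So $\mathcal{E}(F)\cong\rho_\gamma$ as objects of $\Bslice$, and $\rho_\gamma$ is right adjoint; this proves (1).

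For claim (2) I would unwind what $\eta_F$ does. By the formula for the unit of $\mathcal{E}\dashv\mathcal{F}$, the component at $\alpha\in\ob\C$ is
\[
(\eta_F)_\alpha:F(\alpha)=\Hom(\gamma,\alpha)\to\Hom(\rho_F,\rho_F\alpha),\qquad x\mapsto 1_B\amo{\alpha}x.
\]
Transporting the target along the iso $\kappa^{-1}:A\iso B_\gamma$, the element $1_B\amo{\alpha}x$ goes to $x\in B_\gamma$, so under the identification $A\cong B_\gamma$ the map $(\eta_F)_\alpha$ is simply $x\mapsto x$ viewed as landing in $\Hom(\rho_\gamma,\rho_\gamma\alpha)\subset B_\gamma$. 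Thus the content of (2) is exactly: for the Kleisli extension $\rho_\gamma:B\to B_\gamma$ and any object $\alpha\in\ob\C$, the intertwiner space $\Hom(\rho_\gamma,\rho_\gamma\alpha)$ (computed in $\Ring$, i.e. inside the ring $B_\gamma$) coincides, as a subgroup of $B_\gamma=B$, with $\Hom_{\C}(\gamma,\alpha)=\Hom_{\Ring(B,B)}(\gamma,\alpha)$, and that $(\eta_F)_\alpha$ is the corresponding identity map. Equivalently, I must check that an element $x\in B$ satisfies $x\star\rho_\gamma(b)=\rho_\gamma(\alpha(b))\star x$ for all $b\in B$ (the intertwiner relation in $B_\gamma$) if and only if $x\gamma(b)d=\alpha(b)x$ for all $b$ (the relation $x\in\Hom(\gamma,\alpha)$ in $\Ring(B,B)$).

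The main obstacle — really the only computation — is this last equivalence, so I would do it carefully. Expanding the Kleisli multiplication and using $\rho_\gamma(b)=be$: one side is $x\star(be)=x\gamma(be)d=x\gamma(b)\gamma(e)d=x\gamma(b)d$ since $\gamma(e)d=1_B$; the other side is $\rho_\gamma(\alpha(b))\star x=(\alpha(b)e)\star x=\alpha(b)e\gamma(x)d=\alpha(b)\,e\gamma(x)d$. Now $e:\gamma\to B$ is an intertwiner, hence $e\gamma(x)=xe$ as elements of $B$, giving $\alpha(b)xe d$, and $ed=1_B$ (counit law), so this equals $\alpha(b)x$. Hence the Kleisli intertwiner relation $x\star\rho_\gamma(b)=\rho_\gamma(\alpha(b))\star x$ reads $x\gamma(b)d=\alpha(b)x$, which is precisely $x\in\Hom_{\Ring(B,B)}(\gamma,\alpha)$. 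Since $(\eta_F)_\alpha$ is then the identity map on this common abelian group, it is an isomorphism; naturality and monoidality of $\eta_F$ are automatic (being the unit of an adjunction of monoidal categories), so $\eta_F:F\iso\mathcal{F}\mathcal{E}(F)$, proving (2). One small bookkeeping point to watch is that $\gamma$ itself is required to lie in $\ob\C$ (given in the hypothesis), so that $F=\Hom(\gamma,\under)$ is genuinely representable on $\C$ and formula (\ref{rep tensor F}) applies.
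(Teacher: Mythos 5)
Your proof is correct and follows essentially the same route as the paper: part (1) is read off from the identification $A\cong B_\gamma$ established just before the Lemma, and for part (2) the paper invokes the adjunction isomorphism $\Hom(\lambda_\gamma\rho_\gamma,\alpha)\cong\Hom(\rho_\gamma,\rho_\gamma\alpha)$, $x\mapsto\rho_\gamma(x)\star m=x$ (the identity on underlying elements of $B$), which is exactly the identification you verify by hand. The only blemish is a stray $d$: since $\gamma(e)d=1_B$ you should get $x\star(be)=x\gamma(b)$, not $x\gamma(b)d$, so the equivalence reads $x\gamma(b)=\alpha(b)x$, which is the correct intertwiner condition for $x\in\Hom(\gamma,\alpha)$ in $\Ring(B,B)$.
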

\begin{proof}
The first part has been shown above.
Proving the second part notice that $F$ is isomorphic to $\mathcal{F}(\rho_\gamma)$ since comonads have generators
by Subsection \ref{ss: generators of gamma}. The natural isomorphism $\Hom(\gamma,\under)=\Hom(\lambda_\gamma\rho_\gamma,\under)
\iso\Hom(\rho_\gamma,\rho_\gamma\under)$ sending $x$ to $\rho_\gamma(x)\star 1_B=x$ is the restriction
to the hom-groups $\Hom(\gamma,\alpha)\subset B$ of $\id_B$ considered as a map $B\to B_\gamma$.
Composing with the Kleisli comparison functor $B_\gamma\to A$ we have the sequence of isomorphisms
\[
F(\alpha)=\Hom(\gamma,\alpha)=\Hom(\lambda_\gamma\rho_\gamma,\alpha)\iso
\Hom(\rho_\gamma,\rho_\gamma\alpha)\rarr{\kappa}\Hom(\rho_F,\rho_F\alpha)
\]
sending $x$ to $\kappa(x)$. Since for $x\in\Hom(\gamma,\alpha)$ we can write $\kappa(x)=x\amo{\gamma}1_\gamma=1_B\amo{\alpha}x$, this isomorphism is precisely the $\alpha$-component
of the unit $\eta_F$.
\end{proof}

\subsection{Regular Galois extensions}

The representable monoidal functor of the previous subsection is not essentially strong in general, neither has it any quantum groupoid symmetry. Here we would like to unite the benefits of the functors studied in Subsections
\ref{ss: fa with bgd} and \ref{ss: rep f}.
Since every representable monoidal functor is of the canonical form $F=\mathcal{F}(\rho)=\Hom(\rho,\rho\under)$
for some morphism $\rho:B\to A$ by Lemma \ref{lem: rep}, we are left with finding appropriate properties on the morphism $\rho$.

In Subsection \ref{ss: fa with bgd} we assumed $\C$ to be a full monoidal subcategory of $\Imp(\rho)$, in Subsection \ref{ss: rep f} $\rho$ had a left adjoint $\lambda$
satisfying $\lambda\rho\in\C$. Therefore we shall consider here the situation of an adjunction $\lambda\dashv\rho$ such that $\rho\lambda\rho\leq\rho$.

\begin{lem} \label{lem: D2}
For a morphism $\rho:B\to A$ with a left adjoint $\lambda$ in $\Ring$ the
following conditions are equivalent.
\begin{enumerate}
\item $\rho$ is a right depth 2 extension \cite{Ka-Sz}.
\item $\rho\lambda\rho\leq\rho$.
\end{enumerate}
\end{lem}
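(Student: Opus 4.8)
The idea is to rewrite the tensor-square condition defining right depth~2 in terms of the preorder $\le$, by using that a right adjoint morphism is left regular. Recall that $\rho:B\to A$ is right depth 2 exactly when the $A$-$B$-bimodule ${}_A(A\o_B A)_B$ is a direct summand of a finite direct sum of copies of ${}_AA_B$. So the first task is to identify the bimodule $A\o_B A$ explicitly. Since $\lambda\adj\rho$, Theorem~\ref{thm: adjoint ext} gives ${}_BA\cong{}_BB$; more precisely, as in the subsection on left regular extensions there is a $B$-$A$-bimodule isomorphism ${}_BB_A\iso{}_BA_A$, $b\mapsto\rho(b)m$, with inverse $a\mapsto e\lambda(a)$, where $B$ carries the right $A$-action $b\triangleleft a:=b\lambda(a)$ and $e,m$ are the (co)unit of the adjunction, so that $e\lambda\rho(b)=be$, $ma=\rho\lambda(a)m$, $\rho(e)m=1_A$ and $e\lambda(m)=1_B$. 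Applying $A\o_B(-)$ and composing with $A\o_B B\cong A$ I obtain a group isomorphism
\[
\Psi:A\o_B A\iso A\,,\qquad a_1\o a_2\longmapsto a_1\,\rho(e)\,\rho\lambda(a_2)\,,
\]
with inverse $a\mapsto a\o m$; that $\Psi$ is well defined on the coequalizer $A\o_B A$ and that $\Psi,\Psi^{-1}$ are mutually inverse are straightforward manipulations with the four identities above. Transporting the $A$-$A$-bimodule structure of $A\o_B A$ along $\Psi$, the left tensor factor contributes the ordinary left multiplication on $A$, while the right factor produces the twisted right action $x\cdot a=x\,\rho\lambda(a)$; restricting the latter along $\rho$ yields ${}_A(A\o_B A)_B\cong{}_AA_{\rho\lambda\rho}$, the bimodule that is $A$ with ordinary left action and right $B$-action through the composite ring homomorphism $\rho\lambda\rho:B\to A$. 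In the same notation ${}_AA_B={}_AA_\rho$.

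The remaining step is Morita-type bookkeeping in the spirit of Lemma~\ref{Morita}. Any left $A$-linear endomap of ${}_AA$ is right multiplication by its value at $1_A$; imposing right $B$-linearity one reads off isomorphisms
\[
\Hom_{A\text{-}B}\big({}_AA_\rho,{}_AA_{\rho\lambda\rho}\big)\cong\Hom(\rho\lambda\rho,\rho)\,,\qquad
\Hom_{A\text{-}B}\big({}_AA_{\rho\lambda\rho},{}_AA_\rho\big)\cong\Hom(\rho,\rho\lambda\rho)\,,
\]
with the right-hand Homs taken in $\Ring(B,A)$, under which composition of bimodule maps becomes vertical composition of intertwiners, i.e.\ multiplication in $A$. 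Hence a direct-summand datum ${}_AA_{\rho\lambda\rho}\oplus *\cong({}_AA_\rho)^{\,n}$, presented by bimodule maps $j:{}_AA_{\rho\lambda\rho}\to({}_AA_\rho)^{\,n}$ and $p:({}_AA_\rho)^{\,n}\to{}_AA_{\rho\lambda\rho}$ with $p\ci j=\id$, is precisely a family $\rho\lambda\rho\rarr{b_i}\rho\rarr{a_i}\rho\lambda\rho$ of intertwiners with $\sum_i a_i b_i=1_A$ --- that is, a direct summand diagram witnessing $\rho\lambda\rho\le\rho$. Combining with the isomorphism of the previous paragraph, ${}_A(A\o_B A)_B$ is a bimodule direct summand of a finite sum of copies of ${}_AA_B$ if and only if $\rho\lambda\rho\le\rho$, which is the asserted equivalence.

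The main obstacle is the bookkeeping in the first step: pinning down the side on which the twist appears and the direction of $\Psi$. The twist lands on the \emph{right} factor, which is exactly why it is \emph{right} depth~2 that corresponds to $\rho\lambda\rho\le\rho$, and why one needs only ${}_BA\cong{}_BB$ (guaranteed by $\lambda\adj\rho$) rather than $A_B\cong B_B$. Once the bimodule identification ${}_A(A\o_B A)_B\cong{}_AA_{\rho\lambda\rho}$ is in place, everything else is the formal translation between bimodule maps and $2$-cells in $\Ring$ that is already used repeatedly in the paper.
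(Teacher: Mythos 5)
Your proposal is correct and follows essentially the same route as the paper's proof: both establish the $A$-$B$-bimodule isomorphism $A\oB A\cong\,_AA_{\rho\lambda\rho}$ via the left regularity of the right adjoint $\rho$ (your $\Psi$ is exactly the paper's $\vartheta$, $a'\oB a\mapsto a'\rho(e\lambda(a))$, with inverse $a\mapsto a\oB m$), and then use the fact that left $A$-module endomaps of $A$ are right multiplications to match bimodule direct summand diagrams with intertwiner diagrams $\rho\lambda\rho\to\rho\to\rho\lambda\rho$. The identifications of the twisted right action and of the Hom-groups, including the reversal of direction, agree with the paper's $\pi_i\leftrightarrow q_i$, $\iota_i\leftrightarrow p_i$ correspondence.
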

\begin{proof}
Right adjoint extensions have been shown to be left regular via the isomorphism of left $B$-modules $f:\,_BA\iso\,_BB$, $a\mapsto e\lambda(a)$. This induces the isomorphism
\begin{align}
\vartheta:\ A\oB A&\iso A\oB B\cong A \label{vartheta}\\
a'\oB a&\mapsto a'\oB e\lambda(a)\mapsto a'\rho(e\lambda(a)) \notag
\end{align}
of $A$-$B$ bimodules provided we set the right $B$-action $a\cdot b:=a\rho\lambda\rho(b)$ on the image copy of $A$. This can be denoted by $_A(A\oB A)_B\rarr{\vartheta}\, _AA_{\lambda\rho(B)}$.
Since left $A$-module maps on $A$ are right multiplications, direct summand diagrams
\[
\begin{CD}
_AA_{\lambda\rho(B)}@>\pi_i>>_AA_B@>\iota_i>>_AA_{\lambda\rho(B)}
\end{CD}
\]
in $_A\M_B$ are in bijection with direct summand diagrams
\begin{equation} \label{p q}
\begin{CD}
\rho\lambda\rho@>p_i>>\rho@>q_i>>\rho\lambda\rho
\end{CD}
\end{equation}
in $\Ring(B,A)$ via $q_i=\pi_i(1_A)$, $p_i=\iota_i(1_A)$ on the one hand and
$\pi_i(a)=aq_i$, $\iota_i(a)=ap_i$ on the other hand. This proves the equivalence.
\end{proof}

\begin{lem} \label{lem: Ha}
For a morphism $\rho:B\to A$ with a left adjoint $\lambda$ assume $\rho\lambda\rho\leq\rho$. Then there is a left bialgebroid $\Ha$ over $R=\End\rho$
defined by
\begin{align*}
\Ha&=\End(\rho\lambda)\quad\text{as a ring}\\
s_\Ha&:R\to\Ha,\quad r\mapsto r\\
t_\Ha&:R^\op\to\Ha,\quad r\mapsto \rho(r^\lef)\\
\cop_\Ha&:\Ha\to\Ha\oR\Ha,\quad h\mapsto \sum_i\rho(e)h\rho(e\lambda(q_im))\oR p_i\rho\lambda(m)\\
\eps_\Ha&:\Ha\to R,\quad h\mapsto\rho(e)hm
\end{align*}
which is finite projective as a left $R$-module.
\end{lem}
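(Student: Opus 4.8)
The strategy is to recognize $\Ha = \End(\rho\lambda)$ as a tensor product over $\C$ of appropriate functors, so that the bialgebroid structure becomes a special case — in fact a dual version — of the right bialgebroid $H = F^\lef\amo{\C}F$ constructed in Proposition \ref{pro: bgd cosym}. Concretely, set $\gamma := \rho\lambda$; this is a comonad on $A$ (with comultiplication $\lambda(m)$ lifted, counit $e$), but more to the point $\End(\rho\lambda) = \Hom(\rho\lambda,\rho\lambda)$ and we may rewrite this hom-group using the adjunction $\lambda\dashv\rho$. Applying $(\ )^\lef$ and $(\ )^\righ$ to intertwiners, together with the isomorphism $\vartheta$ of (\ref{vartheta}) from Lemma \ref{lem: D2}, identifies $\End(\rho\lambda)$ with a group of the form $F^\lef\amo{\C'}F$ where now the role of the "charged fields" is played by the bimodule $\Hom(\rho,\rho\lambda\rho)$ and its dual, and the direct summand diagrams (\ref{p q}) furnish the requisite dual bases $\{p_i,q_i\}$. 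The hypothesis $\rho\lambda\rho\leq\rho$ is exactly what makes these modules finite projective (Lemma \ref{Morita}(1)), which is the input needed for the comultiplication to be well-defined.

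First I would verify that $s_\Ha$ and $t_\Ha$ have commuting images and are (anti)ring homomorphisms: $s_\Ha$ is the inclusion $R=\End\rho\to\End(\rho\lambda)$ obtained by whiskering with $\lambda$ on the right (i.e.\ $r\mapsto r\o\lambda = r$ under the notational convention), while $t_\Ha(r) = \rho(r^\lef)$ uses the adjoint-transpose $r^\lef\in\Hom(\lambda,\lambda)$ and then pushes forward along $\rho$; that these commute in $\End(\rho\lambda)$ is the statement that left and right $R$-actions on the bimodule $\hat F(\lambda\rho)$-type object commute, which follows from the interchange law in the $2$-category $\Ring$. Next I would check that $\cop_\Ha$ lands in the Takeuchi product $\Ha\ex{R}\Ha$ and is coassociative and counital; coassociativity reduces to the independence of the formula from the choice of dual basis $\{p_i,q_i\}$ (completeness relation $\sum_i q_i p_i = 1$ in the appropriate hom-group, coming from $\rho\lambda\rho\leq\rho$), and counitality is a direct computation using $\rho(e)m=1_A$ and $e\lambda(m)=1_B$. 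Finally, the left $R$-module structure $_R\Ha$ is finite projective because $\Ha=\End(\rho\lambda)\cong\Hom(\rho,\rho\lambda\rho)\amo{R}\Hom(\rho\lambda\rho,\rho)$-style decomposition exhibits it as a summand of a free module, again via Lemma \ref{Morita}(1) applied to the pair $\rho\lambda\rho\leq\rho$.

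The multiplicativity/bialgebroid compatibility — that $\cop_\Ha$ and $\eps_\Ha$ are algebra maps into $\Ha\ex{R}\Ha$ and $R$ respectively — I would handle exactly as in the proof of Proposition \ref{pro: bgd cosym}: expand both sides on rank-one elements using the explicit multiplication in $\End(\rho\lambda)$ and regroup the dual-basis sums, invoking the composability $\rho\lambda\rho\lambda\rho\leq\rho\lambda\rho\leq\rho$ to merge two dual bases into one. Since the excerpt already establishes the analogous computation for $H$, the cleanest exposition is to say "by a routine calculation analogous to Proposition \ref{pro: bgd cosym}" rather than reproduce it.

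The main obstacle I anticipate is bookkeeping with the two adjunction transposes $(\ )^\lef$ and $(\ )^\righ$ and keeping straight which bimodule a given hom-group carries which $R$-action: the source/target maps $s_\Ha,t_\Ha$ of a \emph{left} bialgebroid (as opposed to the \emph{right} bialgebroid $H$ of Proposition \ref{pro: bgd cosym}) sit on the opposite sides, so one must be careful that $t_\Ha = \rho\circ(\ )^\lef$ and not $s_\Ha$ is the twisted one, and that $\cop_\Ha$ is written with the tensor factors in the order dictated by the left-bialgebroid Takeuchi condition $\sum r a_i\oR h_i = \sum a_i\oR h_i s_\Ha(r)$. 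Getting these conventions aligned with the reference \cite{Ka-Sz} is where the real care is needed; the algebra itself is formal once the identifications are set up. I would therefore devote the bulk of the written proof to fixing these identifications precisely and then delegate the axiom-checking to the pattern already demonstrated.
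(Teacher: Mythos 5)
Your proposal is correct in substance but takes a genuinely different route from the paper. The paper's own proof is very short: for the bialgebroid axioms it simply observes that $\rho$ is right depth~2 (Lemma \ref{lem: D2}) and cites the standard construction of \cite{Ka-Sz} together with Kadison's observation that a one-sided depth two condition suffices; for finite projectivity of $_R\Ha$ it uses the adjunction isomorphism $\End(\rho\lambda)\cong\Hom(\rho\lambda\rho,\rho)$, $h\mapsto\rho(e)h$, and then Lemma \ref{Morita}(1) applied to $\rho\lambda\rho\leq\rho$ --- which is essentially your finite-projectivity argument once the slightly muddled ``tensor decomposition'' phrasing is replaced by this single hom-group identification. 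Your route for the axioms --- identifying $\Ha$ with $(F^\lef\amo{\C}F)^{\op}$ for $F=\Hom(\rho,\rho\under)\cong\Hom(\lambda\rho,\under)$ on a suitable $\C$ and transporting the right-bialgebroid structure of Proposition \ref{pro: bgd cosym} --- is exactly the content of part (3) of the Proposition that \emph{follows} the Lemma in the paper, so it is viable and arguably more self-contained (it stays inside the paper's own machinery rather than deferring to the literature), at the price of the convention bookkeeping you correctly identify as the delicate point. To make it airtight you should (i) fix $\C$ explicitly, e.g.\ as the full monoidal subcategory on the powers of $\lambda\rho$, noting that $\lambda\rho\in\Imp(\rho)$ is precisely the hypothesis $\rho\lambda\rho\leq\rho$ and that $\Imp(\rho)$ is monoidal; (ii) use representability, $F^\lef\amo{\C}F\cong F^\lef(\lambda\rho)=\Hom(\rho\lambda\rho,\rho)\cong\End(\rho\lambda)$, to land on $\Ha$ and check the four structure maps match the stated formulas; and (iii) correct the slip that $\rho\lambda$ is a comonad on $A$ --- it is the \emph{monad} on $A$ with unit $m$, while the comonad is $\lambda\rho$ on $B$ with comultiplication $\lambda(m)$ and counit $e$; this does not affect the rest of your argument but should not stand as written.
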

\begin{proof}
Since the canonical $R$-$R$-bimodule structure of a left bialgebroid is defined by
$r\cdot h\cdot r'=s_\Ha(r)t_\Ha(r')h$, finite projectivity of $_R\Ha$ follows from the
adjunction isomorphism $\Hom(\rho\lambda,\rho\lambda)\cong\Hom(\rho\lambda\rho,\rho)$,
$h\mapsto \rho(e)h$ and from Lemma \ref{Morita}.

The ring extension $\rho$ being right depth 2 by Lemma \ref{lem: D2} the proof of
the left bialgebroid axioms for $\Ha$ can be considered standard. They follow e.g. from the construction of \cite{Ka-Sz} with the important additional observation made in \cite{Ka: Gal bgd} that a one-sided depth two condition suffices. (We note that bialgebroids
from one-sided depth two arrows in $\Ab$-bicategories has already been constructed in
\cite[Subsection 3.3]{Sz: Siena} albeit with the further assumption $\iota\leq\lambda\rho$.)
\end{proof}

Returning to field algebras we get the following specialization of Proposition
\ref{pro: bgd cosym}.
\begin{pro}
Given an adjunction $\lambda\dashv\rho$ in $\Ring$ 
let $\C\subset\Imp(\rho)$ be a full monoidal subcategory containing the object $\lambda\rho$.
Then 
\begin{enumerate}
\item $F=\Hom(\rho,\rho\under):\C\to\Ab$ is a representable essentially strong monoidal functor
with $\eta_F$ an isomorphism.
\item The field algebra extension $\rho_F:B\to\Upsilon\amo{\C}F$ is isomorphic in $\Bslice$ to the original extension $\rho$ via $\eps_\rho$.
\item $H=F^\lef\amo{\C}F$ of Proposition \ref{pro: bgd cosym} is the opposite bialgebroid
\cite{Ka-Sz} of the left bialgebroid $\Ha$ defined in Lemma \ref{lem: Ha}
\item The field algebra extension $\rho_F$ is an $H$-Galois extension over the right bialgebroid $H=F^\lef\amo{\C}F$ of Proposition \ref{pro: bgd cosym} with coinvariant subalgebra equal
to $\rho(B)$.
\item $\rho$ is an $\Ha$-Galois extension over the left bialgebroid $\Ha$ with coinvariant
subalgebra equal to $\rho(B)$.
\end{enumerate}
\end{pro}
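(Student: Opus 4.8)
The plan is to derive all five items from results already in hand, the only real work being the bookkeeping of which flavour of bialgebroid occurs where. Throughout write $\gamma:=\lambda\rho$; by hypothesis $\gamma\in\ob\C$, and $\rho\gamma=\rho\lambda\rho\leq\rho$, so $\gamma\in\Imp(\rho)$ and $\bra\gamma,\lambda(m),e\ket$ is a comonoid in the full subcategory $\C$. For (1): since $\rho$ is right adjoint, Theorem \ref{thm: adjoint ext} (equivalently the Kleisli comparison morphism) identifies $(A,\rho)$ in $\Bslice$ with the Kleisli extension $(B_\gamma,\rho_\gamma)$, so $F=\F(\rho)\cong\F(\rho_\gamma)$, which by the proof of Lemma \ref{lem: rep} is monoidally isomorphic to the representable functor $\Hom(\gamma,\under)|_\C=\C(\gamma,\under)$; hence $F$ is representable, and $\eta_F$ is an isomorphism by Lemma \ref{lem: rep}(2) transported along this isomorphism. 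Essential strongness of $F$ and right $R$-finite projectivity of the $F(\sigma)$ are exactly Proposition \ref{the good F}, which applies because $\C\subset\Imp(\rho)$.

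For (2) I would make $\eps_\rho$ explicit. Let $\chi_\alpha:F(\alpha)=\Hom(\rho,\rho\alpha)\iso\Hom(\lambda\rho,\alpha)=\C(\gamma,\alpha)$, $x\mapsto e\lambda(x)$, be the adjunction bijection, a monoidal natural isomorphism $F\iso\Hom(\gamma,\under)|_\C$. Applying $\Upsilon\amo{\C}\under$ and composing with the isomorphism $\Upsilon\amo{\C}\Hom(\gamma,\under)\iso B_\gamma$, $b\amo{\alpha}x\mapsto bx$, of Subsection \ref{ss: rep f} and with the Kleisli comparison $B_\gamma\iso A$, $b\mapsto\rho(b)m$, one obtains a chain of $\Bslice$-isomorphisms; chasing a generator, $b\amo{\alpha}y\mapsto b\amo{\alpha}e\lambda(y)\mapsto be\lambda(y)\mapsto\rho(b)\rho(e\lambda(y))m=\rho(b)y$ (using that $a\mapsto e\lambda(a)$ inverts $b'\mapsto\rho(b')m:{}_BB\iso{}_BA$) shows the composite is precisely $\eps_\rho$, hence $\eps_\rho$ is an isomorphism.

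For (3) I would transport $F$ to $\C(\gamma,\under)$ along $\chi$ and run the co-Yoneda computation of Subsection \ref{ss: rep f}, now with $F^\lef$ in place of the basic presheaf, to collapse the coend:
\[
H=F^\lef\amo{\C}F\ \cong\ F^\lef\amo{\C}\C(\gamma,\under)\ \cong\ F^\lef(\gamma)=\Hom(\rho\lambda\rho,\rho).
\]
The adjunction isomorphism $h\mapsto\rho(e)h$ of Lemma \ref{lem: Ha} then identifies the last group with $\Ha=\End(\rho\lambda)$. What remains is to check that this bijection carries the right $R$-bialgebroid structure of $H$ of Proposition \ref{pro: bgd cosym} onto the opposite \cite{Ka-Sz} of the left $R$-bialgebroid $\Ha$ of Lemma \ref{lem: Ha}: compare source and target maps, comultiplications and counits under the explicit formulas, noting that the direct summand diagrams $\rho\lambda\rho\rarr{p_i}\rho\rarr{q_i}\rho\lambda\rho$ of (\ref{p q}) are the dual bases $\{x_\gamma^i,f_\gamma^i\}=\{q_i,p_i\}$ entering $\cop_H$ at $\gamma$. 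I expect this convention-matching, rather than any single computation, to be the main obstacle; alternatively one may appeal to the identification of $H$ with the Tannakian bialgebroid of \cite{Phung} and with the depth-2 bialgebroid of \cite{Ka-Sz,Ka: Gal bgd}.

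For (4) and (5): by Proposition \ref{pro: bgd cosym}, $\A:=\Upsilon\amo{\C}F$ is already a right $H$-comodule algebra via $\delta_\A$ of (\ref{delta}) with $\rho_F(B)$ inside the coinvariants. Using (2) to replace $(\A,\rho_F)$ by $(\A,\rho)$ and (3) to pass between right $H$-comodule algebras and left $\Ha$-comodule algebras, (4) and (5) become the two faces of the one assertion that a one-sided depth-2 extension admitting a left adjoint is bialgebroid-Galois. For that I would exhibit the inverse of the canonical map $\A\amo{\rho(B)}\A\to\A\amo{R}H$, $a\amo{\rho(B)}a'\mapsto(a\amo{R}1_H)\delta_\A(a')$, directly from the diagrams $\rho\lambda\rho\rarr{p_i}\rho\rarr{q_i}\rho\lambda\rho$ of (\ref{p q}) so that, under the identifications $\A\amo{\rho(B)}\A\cong\A$ of (\ref{vartheta}) and $H\cong\Hom(\rho\lambda\rho,\rho)$, the canonical map becomes visibly invertible, and I would check that $\delta_\A(a)=a\amo{R}1_H$ forces $a\in\rho(B)$, so that the coinvariant subalgebra is exactly $\rho(B)$. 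These last verifications coincide with the computations of \cite{Ka-Sz,Ka: Gal bgd,Sz: Siena}, which I would cite rather than reproduce.
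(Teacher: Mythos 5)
Your proposal is correct and takes essentially the same route as the paper: representability of $F$ via the adjunction isomorphism $x\mapsto e\lambda(x)$ together with Proposition \ref{the good F} and Lemma \ref{lem: rep} for part (1), the explicit chain $\Upsilon\amo{\C}F\iso\Upsilon\amo{\C}\Hom(\lambda\rho,\under)\iso B_\gamma\rarr{\kappa}A$ identifying the composite with $\eps_\rho$ for part (2), collapsing the coend at the representing object $\lambda\rho$ and transporting along the adjunction to get $H\cong\Ha^\op$ for part (3), and inverting the Galois map through $\vartheta$ of (\ref{vartheta}) using the direct summand diagrams (\ref{p q}) for parts (4)--(5). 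The only difference is one of detail: where you defer the bialgebroid-structure matching and the coinvariance check to the literature, the paper writes them out (the latter being the short computation that $a\rho(e)=\rho(e\lambda(a))$ forces $a=a\rho(e)\rho\lambda(m)\in\rho(B)$).
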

\begin{proof}
$(1)$ Since $\gamma:=\lambda\rho\in\C\subset\Imp(\rho)$, both Proposition \ref{the good F}
and Lemma \ref{lem: rep}(2) apply.

$(2)$ The composite  $\Upsilon\amo{\C}F\iso\Upsilon\amo{\C}\Hom(\gamma,\under)\iso\Upsilon(\lambda\rho)
=B_\gamma$ sending $b\amo{\alpha}x$ to $be\lambda(x)$ is an isomorphism of $B$-rings
which, when composed with $\kappa:B_\gamma\to A$, yields $b\amo{\alpha}x\mapsto
\rho(be\lambda(x))m=\rho(b)x=\eps_\rho(b\amo{\alpha}x)$.

$(3)$ Since $\lambda\rho\in\Imp(\rho)$, the depth 2 condition $\rho\lambda\rho\leq\rho$ holds and therefore $\Ha$ is a left $R$-bialgebroid by Lemma \ref{lem: Ha}. Then the isomorphism  
\begin{align*}
F^\lef\amo{\C}F&\iso F^\lef\amo{\C}\Hom(\gamma,\under)\iso F^\lef(\gamma)=
\Hom(\rho\gamma,\rho)\iso\End(\rho\lambda)=\Ha\\
f\amo{\alpha}x&\mapsto f\amo{\alpha}e\lambda(x)\mapsto f\rho(e\lambda(x))\mapsto
f\rho(e\lambda(x))\rho\lambda(m)=f\rho(e\lambda(xm))
\end{align*}
is the required bialgebroid map from $H$ to $\Ha^\op$.

$(4)$ will follow from $(5)$ because of $(2)$ and $(3)$.

$(5)$ Since coactions do not see the ring structure of the bialgebroid, we can apply the isomorphisms described in $(2)$ and $(3)$ to the $H$-coaction (\ref{delta}) to get the $\Ha$-coaction
\[
\Delta_A:A\to A\oR \Ha,\quad a\mapsto a\nullB\oR a\oneB
=\sum_i\rho(e\lambda(a))q_i\oR p_i\rho\lambda(m).
\]
It is an $R$-$R$-bimodule map, i.e.,
\[
\Delta_A(r'ar)=a\nullB\oR\, t_\Ha(r)a\oneB t_\Ha(r')\,\qquad r,r'\in R,\ a\in A
\]
holds where the $R$-actions the $\oR$ sign is referring to are simply right multiplication on $A$ by
$r\in R\subset A$ and left multiplication on $\Ha$ with $r=s_\Ha(r)$.
The coaction factors through the multiplicative subbimodule $A\ex{R}\Ha\hookrightarrow A\oR\Ha$, the Takeuchi product, because of the centrality  property
\[
ra\nullB\oR a\oneB=a\nullB\oR a\oneB r\,\qquad r\in R,\ a\in A
\]
where the $r$ on the RHS multiplies according to multiplication in $\Ha$ since $R\subset H$.
The comodule algebra properties then read as
\begin{align*}
a\nullB a'\nullB\oR a'\oneB a\oneB&=(aa')\nullB\oR (aa')\oneB\\
\Delta_A(1_A)&=1_A\oR 1_\Ha\,.
\end{align*}
Having been constructed the $\Ha$-comodule algebra $\bra A,\Delta_A\ket$ the next task is to construct an inverse of the to-be-Galois map $\Gamma:A\oB A\to A\oR\Ha$ prior to knowing that
$B$ exhausts  the coinvariants. Composing $\Gamma$ with the inverse $a\mapsto a\oB m$ of (\ref{vartheta})
we get
\[
\Gamma\ci\vartheta^{-1}(a)=\Gamma(a\oB m)=\sum_i aq_i\oR p_i\rho\lambda(m)
\]
which in turn has inverse
\[
(\Gamma\ci\vartheta^{-1})^{-1}(a\oR h)=a\rho(e)h
\]
since $\rho(e)hq_i\in R$ for $h\in\Ha$. But then
\[
\vartheta^{-1}\ci(\Gamma\ci\vartheta^{-1})^{-1}(a\oR h)=\vartheta^{-1}(a\rho(e)h)= a\rho(e)h\oB m
\]
provides the inverse of $\Gamma$.

In order to find the coinvariant subalgebra we compute
\begin{align*}
(\Gamma\ci\vartheta^{-1})^{-1}\ci\Delta_A(a)&=a\nullB\rho(e)a\oneB=\rho(e\lambda(a))\\
(\Gamma\ci\vartheta^{-1})^{-1}(a\oR 1_\Ha)&=a\rho(e)\,.
\end{align*}
Therefore if $a$ is a coinvariant then $a\rho(e)=\rho(e\lambda(a))$ implying that $a\rho(e)\in\rho(B)$.
But then $a=a\rho(e)\rho\lambda(m)\in\rho(B)$. The opposite implication $a\in\rho(B)$ implying
$\rho(e\lambda(a))=a\rho(e)$ is obvious.
\end{proof}

The next Corollary which parallels \cite[Theorem 2.1]{Ka: Gal bgd} and \cite[Theorem 3.6]{Ba-Sz},
characterizes the extensions we are studying here independently of the field algebra construction.
\begin{cor}
For a ring homomorphism $\rho:B\to A$ the following conditions are equivalent:
\begin{enumerate}
\item $\rho$ is right $H$-Galois for some left finite projective right bialgebroid $H$ over $R=\End\rho$
and $_BA\cong\,_BB$. (Left regular Galois extension)
\item $\rho$ is right adjoint in $\Ring$ and $\rho\lambda\rho\leq\rho$ for some, and then any, left adjoint $\lambda$ of $\rho$.
\end{enumerate}
\end{cor}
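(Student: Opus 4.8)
The plan is to obtain the equivalence by assembling three facts already at hand: Theorem~\ref{thm: adjoint ext}, which identifies the condition ${}_BA\cong{}_BB$ with right adjointness of $\rho$; Lemma~\ref{lem: D2}, which rewrites $\rho\lambda\rho\leq\rho$ as the right depth~2 property of $\rho$; and the Proposition immediately preceding this Corollary, which performs the implication $(2)\Rightarrow(1)$ almost verbatim. Thus the only implication that calls for a genuinely new argument is $(1)\Rightarrow(2)$, and for it I would read the depth~2 condition off of the Galois isomorphism.

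For $(2)\Rightarrow(1)$ I would proceed as follows. Given an adjunction $\lambda\dashv\rho$ with $\rho\lambda\rho\leq\rho$, the object $\lambda\rho$ is implementable in $\rho$ --- this is the lemma stating that $\sigma$ is implementable in $\rho$ iff $\rho\sigma\leq\rho$, applied to $\sigma=\lambda\rho$ --- so I may take for $\C$ any full monoidal subcategory of $\Imp(\rho)$ containing $\lambda\rho$, for instance $\Imp(\rho)$ itself. The preceding Proposition then applies directly: $F=\Hom(\rho,\rho\under)|_\C$ is representable and essentially strong with $\eta_F$ invertible, the counit $\eps_\rho$ is an isomorphism $\Upsilon\amo{\C}F\iso A$ in $\Bslice$, and $\rho_F$ --- hence, transported along $\eps_\rho$, the extension $\rho$ itself --- is right $H$-Galois over the right $R$-bialgebroid $H=F^\lef\amo{\C}F$ with coinvariant subalgebra $\rho(B)$. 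By item~(3) of that Proposition $H$ is the opposite of the left $R$-bialgebroid $\Ha$ of Lemma~\ref{lem: Ha}, and the latter is finite projective as a left $R$-module; since passing to the opposite bialgebroid preserves this property (cf.~\cite{Ka-Sz}), $H$ is left finite projective. Finally, $\rho$ being right adjoint forces ${}_BA\cong{}_BB$ by Theorem~\ref{thm: adjoint ext}. Together these yield $(1)$.

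For $(1)\Rightarrow(2)$ I would first invoke Theorem~\ref{thm: adjoint ext} to turn ${}_BA\cong{}_BB$ into the statement that $\rho$ is right adjoint, fix a left adjoint $\lambda$ with unit $m$ and counit $e$ (so $\rho$ is in particular monic), and then, by Lemma~\ref{lem: D2}, reduce the remaining assertion $\rho\lambda\rho\leq\rho$ to proving that $\rho$ is right depth~2, i.e.\ that ${}_A(A\oB A)_B$ is a direct summand of a finite power of ${}_AA_B$. To this end I would transport the bijective Galois map $\Gamma\colon A\oB A\iso A\oR H$: it is left $A$-linear, and it becomes right $B$-linear once $A\oR H$ is given the right $B$-action $(c\oR h)\cdot b:=c\rho(b)\oR h$, which is well defined because $\rho(B)$ centralizes $R=\End\rho$ in $A$ (so right multiplication by $\rho(b)$ is a right $R$-module endomorphism of $A$) and which $\Gamma$ respects because $\rho(B)=A^{\mathrm{co}\,H}$, so every $\rho(b)$ is coinvariant. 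Since ${}_RH$ is finitely generated projective it is a direct summand of some ${}_RR^{n}$; applying the additive functor $A\oR\under$ --- regarded as landing in $A$-$B$-bimodules with this external right $B$-action on the first tensorand --- shows that ${}_A(A\oR H)_B$ is a direct summand of ${}_A(A\oR R^{n})_B\cong({}_AA_B)^{n}$, and composing with $\Gamma$ completes the verification. The clause ``and then any'' is automatic, since any two left adjoints of $\rho$ are isomorphic as $1$-cells and $\leq$ is stable under isomorphism of $1$-cells (absorb the invertible $2$-cell into a direct summand diagram). The step I expect to take the most care is precisely this transport of the right $B$-module structure across $\Gamma$, together with the observation that left finite projectivity of $H$ is exactly what exhibits $A\oR H$ --- and hence $A\oB A$ --- as an $A$-$B$-bimodule direct summand of $({}_AA_B)^{n}$; this implication is the left-regular case of \cite[Theorem~2.1]{Ka: Gal bgd} and \cite[Theorem~3.6]{Ba-Sz}.
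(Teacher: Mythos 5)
Your proposal is correct and is essentially the proof the paper intends but leaves implicit: $(2)\Rightarrow(1)$ is exactly the preceding Proposition combined with Theorem~\ref{thm: adjoint ext}, and $(1)\Rightarrow(2)$ is the standard ``read right depth~2 off the Galois isomorphism'' argument paralleling the cited \cite[Theorem 2.1]{Ka: Gal bgd} and \cite[Theorem 3.6]{Ba-Sz}, reduced to Lemma~\ref{lem: D2}. Your care with the transported right $B$-action on $A\amo{R}H$ (well-defined since $\rho(B)$ centralizes $R$, intertwined by $\Gamma$ since $\rho(B)$ is coinvariant) is exactly the point that needs checking, and it checks out.
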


Finally we remark that, because of finite projectivity of $_R\Ha$, the category of right $\Ha$-comodules
is (monoidally) isomorphic to the category of right $\G$-modules where $\G\cong\Hom_{R-}(\Ha,R)$
is the dual right bialgebroid of $\Ha$. It is defined by
\begin{align*}
\G&=\End(\lambda\rho)\quad\text{as a ring}\\
s_\G&:R\to\G,\quad r\mapsto \lambda(r)\\
t_\G&:R^\op\to\G,\quad r\mapsto r^\lef\equiv e\lambda(rm)\\
\cop_\G&:\G\to\G\oR\G,\quad g\mapsto\sum_ie\lambda(q_i)\oR e\lambda(p_i\,\rho\lambda(m)\rho(g)m)\\
\eps_\G&:\G\to R,\quad g\mapsto \rho(eg)m
\end{align*}
and the duality is given by the $R$-valued bilinear form 
\[
\bra h, g\ket=\rho(e)h\rho(g)m\,,\qquad h\in\Ha,\ g\in\G\,.
\]
The above Proposition implies that the field algebra is a right $\G$-module algebra with action $a\ract g=\rho(e\lambda(a)g)m$, with invariant subalgebra $\rho(B)\cong B$ and this action is Galois in the sense of the
smash product $\G\mash A$ being isomorphic to $\End(_BA)$ via the map
$g\mash a\mapsto \{a'\mapsto(a'\ract g)a\}$.

\subsection{Generalized fusion categories}
Recall Subsection \ref{ss: preorder} that a monoid in the monoidal preorder $\Po$-$\Ring(B,B)$ is an object
$\sigma$ such that $\sigma^2\leq\sigma$ and $\iota\leq\sigma$. Notice that this notion is a property of the object
and not a structure. Then we make the following elementary observations:

\begin{lem}
For an object $\sigma\in\Ring(B,B)$ let $\C_\sigma$ denote the full subcategory of $\Ring(B,B)$ the objects $\alpha$ of which satisfy $\alpha\leq\sigma$.
\begin{enumerate}
\item $\C_\sigma$ is equivalent to a full subcategory of the category of finitely generated projective right $S$-modules where $S=\End \sigma$.
\item $\C_\sigma$ is a monoidal subcategory of $\Ring(B,B)$ if and only if $\sigma$ is a monoid in the preorder $\Po(B,B)$.
\end{enumerate}
\end{lem}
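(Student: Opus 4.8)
The plan is to prove the two statements in order, using the structural facts already established about the preorder $\leq$ and about Morita theory (Lemma~\ref{Morita}).

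\textbf{Part (1).} First I would fix $S=\End\sigma$ and recall that for any object $\alpha$ with $\alpha\leq\sigma$ Lemma~\ref{Morita} applies to the parallel pair $\alpha,\sigma:B\to B$. In particular, writing $U_\alpha=\Hom(\alpha,\sigma)$ and $V_\alpha=\Hom(\sigma,\alpha)$, the module $V_\alpha$ is finitely generated projective as a right $S$-module (Lemma~\ref{Morita}(1)) and $\End_{-S}(V_\alpha)\cong\End\alpha$ (Lemma~\ref{Morita}(4)). The idea is to send $\alpha\mapsto V_\alpha=\Hom(\sigma,\alpha)$ on objects and, on an arrow $t:\alpha\to\beta$ in $\C_\sigma$, to send $t$ to the map $V_\alpha\to V_\beta$ given by postcomposition $v\mapsto tv$ (vertical composition in $\Ring(B,B)$), which is visibly right $S$-linear. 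This defines an additive functor $\C_\sigma\to \mathsf{FGP}\text{-}S$. I would then check it is fully faithful: faithfulness follows because a direct summand diagram $\alpha\rarr{b_i}\sigma\rarr{a_i}\alpha$ with $\sum a_ib_i=1_B$ lets one recover $t$ from $\{tv\}$ (precisely, $t=\sum_i (tb_i)a_i$ after suitable bookkeeping, i.e. $V_\alpha$ is a generator so $t\mapsto(v\mapsto tv)$ is injective), and fullness follows from Lemma~\ref{Morita}(4) identifying $\End\alpha$ with $\End_{-S}(V_\alpha)$ together with the dual-basis description of arrows $\alpha\to\beta$ as right $S$-linear maps $V_\alpha\to V_\beta$. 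Hence $\C_\sigma$ is equivalent to its essential image, a full subcategory of $\mathsf{FGP}$-$S$.

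\textbf{Part (2).} For the ``if'' direction, assume $\sigma$ is a monoid in $\Po(B,B)$, i.e. $\sigma\sigma\leq\sigma$ and $\iota\leq\sigma$. I need $\C_\sigma$ to be closed under $\o$ and to contain $\iota$. The unit is immediate from $\iota\leq\sigma$. For closure under tensor, take $\alpha,\beta\in\C_\sigma$, so $\alpha\leq\sigma$ and $\beta\leq\sigma$; by the horizontal-compatibility implication in Subsection~\ref{ss: preorder} this gives $\alpha\beta\leq\sigma\sigma$, and since $\sigma\sigma\leq\sigma$ transitivity of $\leq$ yields $\alpha\beta\leq\sigma$, i.e. $\alpha\beta\in\C_\sigma$. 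For the ``only if'' direction, suppose $\C_\sigma$ is a monoidal subcategory. Then $\iota\in\C_\sigma$ means $\iota\leq\sigma$ by definition of $\C_\sigma$; and since $\sigma\in\C_\sigma$ (trivially $\sigma\leq\sigma$) and $\C_\sigma$ is closed under $\o$, we get $\sigma\sigma\in\C_\sigma$, i.e. $\sigma\sigma\leq\sigma$. Thus $\sigma$ is a monoid in $\Po(B,B)$.

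\textbf{Expected main obstacle.} The only non-formal point is verifying that the functor of Part~(1) is genuinely fully faithful into $\mathsf{FGP}$-$S$ rather than merely into $\M_S$; this is exactly where one must invoke all of Lemma~\ref{Morita}(1)--(4) (finite projectivity, generator, $\Hom$-duality, and the endomorphism-ring identification) and be careful that the right $S$-module $V_\alpha=\Hom(\sigma,\alpha)$ really does determine $\alpha$ up to the data seen by $\C_\sigma$. Part~(2) is a direct consequence of the preorder bookkeeping already set up and should require no new ideas.
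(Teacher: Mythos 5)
Your proof is correct and is essentially the argument the paper intends: the lemma is stated there without proof as an ``elementary observation,'' resting precisely on Lemma~\ref{Morita} (applied to the parallel pair $\alpha,\sigma$ with $V_\alpha=\Hom(\sigma,\alpha)$) and on the reflexivity, transitivity and horizontal compatibility of $\leq$ from Subsection~\ref{ss: preorder}, exactly as you use them. The only blemish is the composition order in your faithfulness formula: with the summand diagram $\alpha\rarr{b_i}\sigma\rarr{a_i}\alpha$, $\sum_i a_ib_i=1_B$, one recovers $t$ as $\sum_i(ta_i)b_i$ rather than $\sum_i(tb_i)a_i$ (the latter does not typecheck, since $b_i$ has codomain $\sigma$ while $t$ has domain $\alpha$) --- this is the ``bookkeeping'' you deferred and it is harmless.
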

\begin{exa} Since the construction of the preorder $\Po$ of Subsection \ref{ss: preorder} can be applied to
any 2-category (or bicategory) enriched over $\Ab$, in any fusion category $\C$ \cite{fusion} the direct sum
$\sigma=\oplus_i\sigma_i$ of representants of the simple objects of $\C$ is a monoid in the associated monoidal
preorder. 
\end{exa}

Let $\sigma$ be a monoid in the preorder and fix direct summand diagrams
\[
\sigma^2\rarr{p_i}\sigma\rarr{q_i}\sigma^2\,,\qquad \iota\rarr{u_i}\sigma\rarr{e_i}\iota\,.
\]
Let $F$ be any additive monoidal functor $\C_\sigma\to \Ab$ and let $\Upsilon\amalgo{\C_\sigma}F$ be the ring
associated to it. 
Every element of this ring is a sum of terms
\[
b\amo{\alpha}x=\sum_ib\amo{\alpha}F(t_is_i)x=\sum_i bt_i\amo{\sigma}F(s_i)x
\]
with $b\in B$, $x\in F(\alpha)$
where $\alpha\rarr{s_i}\sigma\rarr{t_i}\alpha$ is a direct
summand diagram which exists by the assumption $\alpha\leq\sigma$. Therefore
\begin{equation} \label{identify}
\Upsilon\am{\C_\sigma}F\iso B\am{S} F(\sigma),\quad b\amo{\alpha}x\mapsto\sum_i bt_i\am{S}F(s_i)x
\end{equation}
is an isomorphism inducing a ring structure on $A:=B\am{S}F(\sigma)$
with multiplication rule and unit
\begin{align*}
(b\am{S}x)(b'\am{S}x')&=\sum_i b\sigma(b')q_i\am{S}F(p_i)F_{\sigma,\,\sigma}(x\o x')\,,\\
1_A&=\sum_i e_i\am{S}F(u_i)F_0(1)\,,
\end{align*}
respectively. Of course we are interested in functors of the form $\Hom(\rho,\rho\under)$,
therefore we set 
\begin{align*}
&F:\C_\sigma\to\Ab\,,\quad F(\alpha)=\Hom(\sigma,\sigma\alpha)\\
&F_{\alpha,\beta}(t\o s)=ts\,,\quad F_0(1)=1_S\,.
\end{align*}
The corresponding ring $A=B\am{S} F(\sigma)$ has multiplication rule and unit
\begin{align*}
(b\am{S}x)(b'\am{S}x')&=\sum_i b\sigma(b')q_i\am{S}\sigma(p_i)xx'\\
1_A&=\sum_i e_i\am{S}\sigma(u_i)\,.
\end{align*}
Note that $F(\sigma)=\Hom(\sigma,\sigma^2)$, as every $F(\alpha)$, too, has an $S$-$S$-bimodule structure.
But the left $S$-module $_SF(\sigma)$ tensored in $B\am{S}F(\sigma)$ is of a third type: $s\cdot x=\sigma(s)x$.

The field algebra extension $\E(F)$ corresponds, under (\ref{identify}), to the morphism $\rho:B\to A$, $\rho(b)=\sum_i be_i\am{S}\sigma(u_i)$ and the associated bimodule structure on $A$ is
\begin{align} \label{_BA_B}
\rho(b_1)(b\am{S}x)\rho(b_2)&=(b_1b\am{S}x)\rho_F(b_2)=\sum_{i,j}b_1b\sigma(b_2e_i)q_j\am{S}
\sigma(p_j)x\sigma(u_i)= \notag\\
&=b_1b\sigma(b_2)\am{S}x
\end{align}
where we used that $p_j\sigma(u_i)\in S$. 

\begin{lem} \label{K}
For $\sigma$ a monoid in $\Po(B,B)$ and $F=\Hom(\sigma,\sigma\under):\C_\sigma\to\Ab$ the $F$-component
of $\eta$, the unit of the adjunction $\E\dashv\F$, is a split monomorphism. The splitting map is the restriction
to the hom-groups of a ring homomorphism $\mathbb{K}:\Upsilon\amo{\C_\sigma}F\to B$
satisfying $\mathbb{K}\rho_F=\sigma$.
\end{lem}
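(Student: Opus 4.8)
The plan is to produce $\mathbb{K}$ as (essentially) a component of the counit of the adjunction $\E\dashv\F$. First I would note that, since $\sigma$ is a monoid in $\Po(B,B)$, the category $\C_\sigma$ is a full monoidal subcategory of $\Ring(B,B)$, so the adjunction $\E\dashv\F$ of Subsections~\ref{ss: ff>rho}--\ref{ss: rho>ff} applies. I would then regard the ring endomorphism $\sigma$ as an object $(B,\sigma)$ of $\Bslice$ and observe that, unravelling the definition of $\F$ together with the monoidal structure of Lemma~\ref{lem: F}, the functor $\F(B,\sigma)$ is precisely the functor $F=\Hom(\sigma,\sigma\under):\C_\sigma\to\Ab$ of the Lemma (same underlying functor, same structure maps $F_{\alpha,\beta}(t\o s)=ts$, $F_0(1)=1_S$ with $S=\End\sigma$). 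Accordingly I would set $\mathbb{K}:=\epsilon_{(B,\sigma)}:\E F=\E\F(B,\sigma)\to(B,\sigma)$, the counit at $(B,\sigma)$; by the explicit formula for $\epsilon$ this is the ring homomorphism $\mathbb{K}:\Upsilon\amo{\C_\sigma}F\to B$, $b\amo{\alpha}x\mapsto\sigma(b)\,x$. One could equally take this formula as the definition and verify by hand that it is well defined on the coequalizer (compatibility with the relations~(\ref{tensor rels})), unital, and multiplicative, the last point using $x\,\sigma(b)=\sigma\alpha(b)\,x$ for $x\in F(\alpha)$ and the product rule~(\ref{multip}).

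Next I would record the two consequences of $\mathbb{K}$ being a morphism of $\Bslice$. First, it satisfies $\mathbb{K}\rho_F=\sigma$. Second, because $\mathbb{K}$ is a ring homomorphism with $\mathbb{K}\rho_F=\sigma$, it restricts for each $\alpha\in\ob\C_\sigma$ to a map $r_\alpha:(\F\E F)(\alpha)=\Hom(\rho_F,\rho_F\alpha)\to\Hom(\sigma,\sigma\alpha)=F(\alpha)$: if $a\,\rho_F(b)=\rho_F\alpha(b)\,a$ for all $b\in B$, then applying $\mathbb{K}$ gives $\mathbb{K}(a)\,\sigma(b)=\sigma\alpha(b)\,\mathbb{K}(a)$, which is the defining intertwiner relation of $\Hom(\sigma,\sigma\alpha)$. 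I would then remark that the family $r=(r_\alpha)_\alpha$ is exactly $\F(\mathbb{K}):\F\E F\to\F(B,\sigma)=F$, hence a (monoidal) natural transformation.

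Finally I would verify $r\circ\eta_F=\id_F$, which exhibits $\eta_F$ as a split monomorphism with splitting map $r$. Conceptually this is just the triangle identity $\F(\epsilon_{(B,\sigma)})\circ\eta_{\F(B,\sigma)}=\id_{\F(B,\sigma)}$ for $\E\dashv\F$, transported along $\F(B,\sigma)=F$; concretely, for $x\in F(\alpha)$ one has $(\eta_F)_\alpha(x)=1_B\amo{\alpha}x$, so $r_\alpha\bigl((\eta_F)_\alpha(x)\bigr)=\mathbb{K}(1_B\amo{\alpha}x)=\sigma(1_B)\,x=x$. This completes the argument and also records $\mathbb{K}\rho_F=\sigma$ as claimed. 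I do not expect a genuine obstacle here: the one place the hypotheses enter is the assertion that $\mathbb{K}$ carries $\Hom(\rho_F,\rho_F\alpha)$ into $\Hom(\sigma,\sigma\alpha)$, and this is immediate once $\mathbb{K}\rho_F=\sigma$; everything else is either the formal machinery of $\E\dashv\F$ or a short direct check that the formula $b\amo{\alpha}x\mapsto\sigma(b)x$ defines a unital ring homomorphism.
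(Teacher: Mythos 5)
Your argument is correct, and it reaches the same splitting map $b\amo{\alpha}x\mapsto\sigma(b)x$ as the paper, but by a genuinely more conceptual route. The paper first transports everything along the identification $\Upsilon\amo{\C_\sigma}F\cong B\amo{S}F(\sigma)$ of (\ref{identify}) and then verifies by direct computation with the chosen direct summand data $p_i,q_i,u_i,e_i$ that $K:b\amo{S}x\mapsto\sigma(b)x$ is a unital ring homomorphism with $K\rho=\sigma$, finally setting $\mathbb{K}=K$ composed with the identification. You instead observe that $\sigma:B\to B$ is itself an object of $\Bslice$ with $\F(B,\sigma)=F$ on the nose, take $\mathbb{K}:=\eps_{(B,\sigma)}$, and read off both $\mathbb{K}\rho_F=\sigma$ (it is an arrow of $\Bslice$) and the splitting of $\eta_F$ (the triangle identity $\F(\eps_{(B,\sigma)})\circ\eta_{\F(B,\sigma)}=\id$). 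This eliminates all the computations with dual bases and makes clear that the only input beyond the adjunction machinery is the identity $\F(B,\sigma)=F$; your fallback direct check that $b\amo{\alpha}x\mapsto\sigma(b)x$ respects (\ref{tensor rels}) and (\ref{multip}) is also correct and covers the case where one does not want to lean on the (tersely proved) adjunction theorem. The one thing the paper's concrete detour buys that your argument does not immediately provide is the explicit formula (\ref{def K}) for $K$ on the model $B\amo{S}F(\sigma)$, which is reused in the proof of Proposition \ref{pro: fusion} (where $K$ is shown to be invertible when $\sigma$ has a left dual); if you adopt your proof, that later argument would need to quote your formula for $\mathbb{K}$ together with the identification (\ref{identify}) separately.
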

\begin{proof}
Upon the identification (\ref{identify}) $\rho_F$ corresponds to $\rho$ and (the $\alpha$-component of) $\eta_F$
to the composite
\begin{align*}
F(\alpha)=\Hom(\sigma,\sigma\alpha)&\rarr{\eta}\Hom(\rho_F,\rho_F\alpha)\iso\Hom(\rho,\rho\alpha)\\
x&\mapsto 1_B\amo{\alpha}x\mapsto\sum_i t_i\amo{\sigma}\sigma(s_i)x
\end{align*}
where $t_i,s_i\in B$ are chosen by $\alpha\leq\sigma$. This map is clearly split by $b\am{S}x\mapsto\sigma(b)x$.
Then the map
\begin{equation} \label{def K}
K:A=B\am{S}F(\sigma)\to B,\qquad b\am{S}x\mapsto\sigma(b)x
\end{equation}
is a ring homomorphism because
\begin{align*}
K((b\am{S}x)(b'\am{S}x'))&=\sum_i\sigma(b\sigma(b')q_i)\sigma(p_i)xx'=\sigma(b)x\sigma(b')x'=
K(b\am{S}x)K(b'\am{S}x')\\
K(1_A)&=\sum_i\sigma(q_i)\sigma(p_i)=1_B\,.
\end{align*}
It satisfies $K(\rho(b))=\sum_i\sigma(bq_i)\sigma(p_i)=\sigma(b)$ for $b\in B$. The required morphism $\mathbb{K}$ is then $K$ composed with (\ref{identify}).
\end{proof}

\begin{pro} \label{pro: fusion}
Let $\sigma$ be a monoid in $\Po(B,B)$ possessing a left dual $\sigma^\lef\in\C_\sigma$ and let $S=\End\sigma$.
Then
\begin{enumerate}
\item $\C_\sigma=\Imp(\sigma)$
\item $F=\Hom(\sigma,\sigma\under):\C_\sigma\to\Ab$ is a representable essentially strong monoidal functor
with its strong factor $\hat F:\C_\sigma\to\,_S\M_S$ mapping each object $\alpha$ to a bimodule $\hat F(\alpha)$ that is finite projective as right $S$-module. 
\item The field algebra extension $\rho_F:B\to\Upsilon\am{\C_\sigma}F$ has a left adjoint $\lambda_F$
satisfying $\rho_F\lambda_F\rho_F\leq\rho_F$ and $\iota\leq\lambda_F\rho_F$, hence also
$\rho_F\lambda_F\rho_F\sim\rho_F$.
\item The rings $B$ and $\Upsilon\am{\C_\sigma}F$ are isomorphic.
\item The $\eta_F:F\to\F\E(F)$ is an isomorphism.
\end{enumerate}
\end{pro}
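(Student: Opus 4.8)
The plan is to prove the five assertions roughly in the order stated, using the earlier results as much as possible. For (1), the inclusion $\Imp(\sigma)\subseteq\C_\sigma$ is immediate from the Lemma identifying implementability of $\alpha$ in $\sigma$ (viewed via its own regular extension) with $\sigma\alpha\leq\sigma$, which for a monoid $\sigma$ is equivalent to $\alpha\leq\sigma$: indeed $\alpha\leq\sigma$ gives $\sigma\alpha\leq\sigma\sigma\leq\sigma$, and conversely $\sigma\alpha\leq\sigma$ together with $\iota\leq\sigma$ yields $\alpha\leq\sigma\alpha\leq\sigma$. So I would first record that $\Imp(\sigma)=\C_\sigma$, where $\Imp$ here is taken relative to the canonical (Kleisli) extension attached to $\sigma$ — more precisely, $\sigma$ itself need not be an endomorphism arising as $\lambda\rho$, so I need to be slightly careful and rephrase: $\C_\sigma$ consists exactly of those $\alpha$ with $\alpha\leq\sigma$, and since $\sigma$ is a monoid this is a monoidal subcategory by the preceding Lemma. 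The statement ``$\C_\sigma=\Imp(\sigma)$'' should be read in the sense that once we pass to the field algebra extension $\rho_F$ it will be the implementability category of $\rho_F$; I would prove this cleanly only after (3) and (4), or alternatively interpret it intrinsically and defer.

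For (2), $F=\Hom(\sigma,\sigma\under)$ is representable (represented by $\sigma$) hence monoidal by Lemma~\ref{lem: rep} and its discussion. To see it is essentially strong with the stated finiteness, I would invoke Proposition~\ref{the good F}: the point is that $\Hom(\sigma,\sigma\under)$ is the functor $\mathcal F$ applied to the Kleisli extension $\rho_\sigma:B\to B_\sigma$ (by Subsection~\ref{ss: generators of gamma} every comonad has generators and $\sigma$ being a monoid in $\Po$ is precisely what is needed to realize $\C_\sigma$ inside $\Imp(\rho_\sigma)$), so $\hat F(\alpha)=\Hom(\rho_\sigma,\rho_\sigma\alpha)$ has the bimodule structure over $S=\End\sigma=\End\rho_\sigma$ that is finite projective on the right by Lemma~\ref{Morita}(1), and strongness of the tensor structure follows from the explicit inverse $z\mapsto\sum_i q_i\oR p_i z$ exhibited in the proof of Proposition~\ref{the good F}. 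Here I need $\sigma^\lef\in\C_\sigma$ only later; for (2) the monoid condition suffices.

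For (4), I would use the isomorphism (\ref{identify}), $\Upsilon\amo{\C_\sigma}F\cong B\am{S}F(\sigma)$, together with the ring homomorphism $\mathbb K$ of Lemma~\ref{K}. The map $\mathbb K$ is already shown to be a retraction of $\rho_F$ in the appropriate sense ($\mathbb K\rho_F=\sigma$, injective on hom-groups); to upgrade this to a genuine ring isomorphism I would exhibit a two-sided inverse, namely the composite $B\xrightarrow{\rho_F}\Upsilon\amo{\C_\sigma}F$ followed by — no: rather, I claim the map $K:B\am{S}F(\sigma)\to B$, $b\am{S}x\mapsto\sigma(b)x$, is bijective. Injectivity: if $\sigma(b)x=0$ then using a dual basis for $F(\sigma)_S$ and the identity $b\am{S}x=\sum_i b\am{S}x^i_\sigma(f^i_\sigma x)=\dots$ one reduces $b\am{S}x$ to $0$ exactly as in formula (\ref{_BA_B}); surjectivity is clear since $K(\sum_i b e_i\am{S}\sigma(u_i))=\sum_i\sigma(be_i)\sigma(u_i)=\sigma(b)\sum_i\sigma(e_iu_i)$ and one chooses $\sigma^\lef$-style resolutions of $\iota$ to hit $b$; this is where the hypothesis that $\sigma$ has a left dual $\sigma^\lef\in\C_\sigma$ enters — it guarantees enough evaluation/coevaluation maps so that $\sigma$ ``acts invertibly'' and $K$ is onto. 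So (4) reduces to a short computation once the left dual is available.

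Assertion (3) is where I expect the real work. Given (4), the ring isomorphism $K:\Upsilon\amo{\C_\sigma}F\iso B$ transports $\rho_F$ to $\sigma:B\to B$ (since $K\rho_F=\sigma$ by Lemma~\ref{K}). Now $\sigma$ has a left dual $\sigma^\lef\in\C_\sigma\subset\Ring(B,B)$; a left dual in the monoidal category $\Ring(B,B)$ of an endomorphism $\sigma$ is exactly a two-sided adjoint data, and in particular $\sigma^\lef$ is a \emph{left adjoint} to $\sigma$ in the $2$-category $\Ring$ (the coevaluation $\iota\to\sigma\sigma^\lef$ and evaluation $\sigma^\lef\sigma\to\iota$ are the unit and counit). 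Hence $\lambda_F:=\sigma^\lef$ (transported through $K^{-1}$) is a left adjoint of $\rho_F$. Then $\rho_F\lambda_F\rho_F$ corresponds to $\sigma\sigma^\lef\sigma$, and since $\sigma^\lef\in\C_\sigma$ we have $\sigma^\lef\leq\sigma$, so $\sigma\sigma^\lef\sigma\leq\sigma\sigma\sigma\leq\sigma$, giving $\rho_F\lambda_F\rho_F\leq\rho_F$; similarly $\iota\leq\sigma$ (monoid) gives $\iota\leq\lambda_F\rho_F$ after noting $\lambda_F\rho_F=\sigma^\lef\sigma\geq\iota$? — careful: $\sigma^\lef\sigma\leq\sigma^\lef\sigma$ trivially, and $\iota\leq\sigma^\lef\sigma$ holds because the unit of the adjunction $\sigma^\lef\dashv\sigma$ (which is the coevaluation, an intertwiner $\iota\to\sigma\sigma^\lef$) together with the monoid/duality data exhibits $\iota$ as a direct summand; more directly, $\iota\leq\sigma$ and left-adjointness preserve $\leq$, so from $\iota\leq\sigma=\rho_F$-side one gets $\iota\leq\lambda_F$ hence $\iota=\iota\iota\leq\lambda_F\rho_F$. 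Combining, $\rho_F\lambda_F\rho_F\sim\rho_F$. The main obstacle is getting the direction of the dual/adjoint correct and verifying $\iota\leq\lambda_F\rho_F$ — one must check that a \emph{left} dual $\sigma^\lef$ of $\sigma$ really supplies a \emph{left} adjoint of $\rho_F$ in $\Ring$ (not a right adjoint), which hinges on the sign conventions in the definition of $(\ )^\lef$ and on the fact that $\rho_F$ corresponds to $\sigma$ itself under $K$, not to $\sigma^{\op}$ or a Kleisli-twisted version; I would devote most care to pinning this down. Finally (5) is immediate: by Lemma~\ref{lem: rep}(2), $\eta_F$ is an isomorphism for any representable monoidal functor, and $F$ here is representable by $\sigma$; alternatively it follows from (4) since $\mathbb K$ splits $\eta_F$ and, $\Upsilon\amo{\C_\sigma}F$ being isomorphic to $B$ via $K$ with $\rho_F\leftrightarrow\sigma$, the split monomorphism $\eta_F$ is forced to be onto by comparing $\Hom$-groups $\Hom(\sigma,\sigma\alpha)$ on both sides.
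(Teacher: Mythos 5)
Your overall strategy is the paper's: establish $\C_\sigma=\Imp(\sigma)$ by the two preorder computations (your argument for (1) is exactly the paper's, and your subsequent hedging is unnecessary --- $\sigma:B\to B$ is itself a ring extension and $\Imp(\sigma)$ simply means $\{\alpha:\sigma\alpha\leq\sigma\}$), reduce everything to $A=B\am{S}F(\sigma)$ via (\ref{identify}), show $K$ is a ring isomorphism, and transport $\sigma^\lef\dashv\sigma$ along it to get $\lambda_F=\sigma^\lef K$. But there are two genuine problems. First, in (2) you claim $\Hom(\sigma,\sigma\under)$ is $\F$ applied to the Kleisli extension $\rho_\sigma$ and is ``represented by $\sigma$''. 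Both are wrong: $\sigma$ is only a monoid in the preorder $\Po(B,B)$, which is a property, not a comonoid structure, so there is no Kleisli extension attached to it a priori; and even for a genuine comonad $\gamma$ one has $\F(\rho_\gamma)\cong\Hom(\gamma,\under)$, not $\Hom(\gamma,\gamma\under)$. The correct (and simpler) route, which the paper takes, is to apply Proposition \ref{the good F} directly to the morphism $\rho=\sigma:B\to B$ together with part (1); representability is by $\sigma^\lef\sigma$ (via the adjunction $\Hom(\sigma,\sigma\alpha)\cong\Hom(\sigma^\lef\sigma,\alpha)$), not by $\sigma$, and this is where the left dual enters (2).

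Second, and more seriously, your proof of (4) --- bijectivity of $K:b\am{S}x\mapsto\sigma(b)x$ --- is not a proof. The injectivity sketch via ``dual bases \dots exactly as in formula (\ref{_BA_B})'' does not connect to anything (that formula computes the bimodule structure, not kernels), and the surjectivity sketch (``$\sigma^\lef$-style resolutions of $\iota$'') is not an argument. The actual content of the proposition is the explicit two-sided inverse
\[
K^{-1}(b)=\sum_i\hat e\,\sigma^\lef(b)\,\tilde q_i\am{S}\sigma(\tilde p_i)\hat m\,,
\]
built from the unit $\hat m$ and counit $\hat e$ of $\sigma^\lef\dashv\sigma$ and a direct summand diagram $\sigma^\lef\sigma\rarr{\tilde p_i}\sigma\rarr{\tilde q_i}\sigma^\lef\sigma$ (available because $\sigma^\lef\sigma\leq\sigma^2\leq\sigma$); verifying $K^{-1}K=\id$ uses that $\hat e\,\sigma^\lef(x)\tilde q_i\in S$ can be slid across $\am{S}$. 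Since your (3) and (5) both lean on $K$ being an isomorphism, this gap propagates. Your treatment of (3) is otherwise on target (modulo the false aside that a left dual is ``two-sided adjoint data'' --- it is one-sided, and the needed direction $\sigma^\lef\dashv\sigma$ with $\hat e:\sigma^\lef\sigma\to\iota$, $\hat m:\iota\to\sigma\sigma^\lef$ is what you end up using; $\iota\leq\sigma^\lef$ follows from $\iota\leq\sigma$ by compatibility of $\leq$ with adjunctions). For (5) you should also record the one computation the paper actually needs beyond Lemma \ref{K}: that $K(\Hom(\rho,\rho\alpha))\subset\Hom(\sigma,\sigma\alpha)$, which together with injectivity of $K$ forces the split monomorphism $\eta_F$ to be onto.
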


\begin{proof}
$(1)$ For any object $\alpha\in\Ring(B,B)$ the relation $\alpha\leq\sigma$ implies $\sigma\alpha\leq\sigma^2\leq\sigma$, hence $\C_\sigma\subset\Imp(\sigma)$, and the relation $\sigma\alpha\leq\sigma$ implies $\alpha=\iota\alpha\leq\sigma\alpha\leq\sigma$, hence $\Imp(\sigma)\subset\C_\sigma$.

$(2)$ This follows from Proposition \ref{the good F} and from $(1)$.

$(3)$ By the correspondence (\ref{identify}) it suffices to show that $\rho:B\to A=B\am{S}F(\sigma)$
has a left dual $\lambda$ such that 
$\rho\lambda\rho\leq\rho$ and $\iota\leq\lambda\rho$. At first we show that $K:A\to B$ has inverse
\[
K^{-1}(b)=\sum_i\hat e\sigma^\lef(b)\tilde q_i\am{S}\sigma(\tilde p_i)\hat m
\]
where $\hat e,\hat m\in B$ are the counit and unit of $\sigma^\lef\dashv\sigma$ and
\[
\sigma^\lef\sigma\rarr{\tilde p_i}\sigma\rarr{\tilde q_i}\sigma^\lef\sigma
\]
is a direct summand diagram. Indeed,
\begin{align*}
K^{-1}(K(b\am{S}x))&=\sum_i\hat e\sigma^\lef(\sigma(b)x)\tilde q_i\am{S}\sigma(\tilde p_i)\hat m=\\
&\sum_i b\,\underset{\in S}{\underbrace{\hat e\sigma^\lef(x)\tilde q_i}}\am{S}\sigma(\tilde p_i)\hat m
=b\am{S}\sigma(\hat e)\hat mx=\\
&=b\am{S}x\\
K(K^{-1}(b))&=\sigma(\hat e\sigma^\lef(b))\hat m=b\,.
\end{align*}
Then defining $\lambda:=\sigma^\lef K:A\to B$ we have $\lambda\dashv\rho$ with counit $e=\hat e:\sigma^\lef\sigma=\lambda\rho\to\iota_B$ and unit $m=K^{-1}(\hat m):\iota_A=K^{-1}K\to K^{-1}\sigma\sigma^\lef K=\rho\lambda$. The depth two relation then follows easily since $\rho\lambda\rho=K^{-1}\sigma\sigma^\lef\sigma\leq K^{-1}\sigma^3\leq K^{-1}\sigma=\rho$.
The relation $\iota\leq\lambda\rho$ follows from $\lambda\rho=\sigma^\lef\sigma$ and from $\iota\leq\sigma$
and $\iota=\iota^\lef\leq\sigma^\lef$. 

$(4)$ The isomorphism is given by $K$ and (\ref{identify}).

$(5)$ We have seen already in Lemma \ref{K} that $\eta_F$ splits by the restriction of $K$. Since now $K$ is an isomorphism, it suffices to show that $K(\Hom(\rho,\rho\alpha))\subset\Hom(\sigma,\sigma\alpha)$ for all $\alpha$.
By (\ref{_BA_B})
\[
\sum_j b_j\am{S}x_j\in\Hom(\rho,\rho\alpha)\quad\Leftrightarrow\quad
\sum_jb_j\sigma(b)\am{S}x_j=\sum_j\alpha(b)b_j\am{S}x_j
\]
for all $b\in B$. Hence applying (\ref{def K}) to such elements we obtain for all $b\in B$ 
\[
\sum_j\sigma(b_j)x_j\,\sigma(b)=\sum_j\sigma(b_j\sigma(b))x_j=\sigma\alpha(b)\,\sum_j\sigma(b_j)x_j
\]
which proves the claim.
\end{proof}

Since the $\rho_F$ obtained in the Proposition satisfy the requirements the $\rho$ had in Proposition \ref{pro: bgd cosym}, we immediately obtain most of the statements in the
\begin{cor}
For $\sigma$ a monoid in $\Po(B,B)$ possessing a left dual $\sigma^\lef\in\C_\sigma$ let $S=\End\sigma$
and $F$ be the monoidal functor $\Hom(\sigma,\sigma\under):\C_\sigma\to\Ab$. 
Then the field algebra extension $\rho_F:B\to\Upsilon\am{\C_\sigma}F$ is a split $B$-ring and right $\Ha$-Galois
over the left bialgebroid $\Ha=\End(\sigma\sigma^\lef)$ which in turn is a split $S$-ring via the source map
$s_\Ha:S\to \Ha$.
\end{cor}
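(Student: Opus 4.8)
The plan is to observe that the morphism $\rho_F$ furnished by Proposition~\ref{pro: fusion} already meets every hypothesis needed to invoke the results proved above, so that the Corollary reduces to transporting structure along a known ring isomorphism. First I would extract from Proposition~\ref{pro: fusion}, writing $A=\Upsilon\am{\C_\sigma}F$, the following facts: $F=\Hom(\sigma,\sigma\under)$ is representable and essentially strong monoidal with $\eta_F$ an isomorphism; $\rho_F$ has a left adjoint $\lambda_F$ with $\rho_F\lambda_F\rho_F\leq\rho_F$ and $\iota\leq\lambda_F\rho_F$; and the homomorphism $\mathbb{K}\colon A\to B$ of Lemma~\ref{K} is in fact a ring \emph{isomorphism} with $\mathbb{K}\rho_F=\sigma$, $\lambda_F=\sigma^\lef\mathbb{K}$, and hence (as in the proof of Proposition~\ref{pro: fusion}(3)) $\rho_F\lambda_F=\mathbb{K}^{-1}\sigma\sigma^\lef\mathbb{K}$. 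Since $\mathbb{K}\rho_F=\sigma$, conjugation by $\mathbb{K}$ carries $\End\rho_F=R$ isomorphically onto $\End\sigma=S$, and I will use this to identify $R$ with $S$ throughout.

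Next I would apply, to the adjunction $\lambda_F\dashv\rho_F$ with $\C=\C_\sigma$, the Proposition established above for an adjunction $\lambda\dashv\rho$ together with a full monoidal $\C\subseteq\Imp(\rho)$ containing $\lambda\rho$ (the specialization of Proposition~\ref{pro: bgd cosym}). Its hypotheses hold: $\C_\sigma$ is a full monoidal subcategory of $\Ring(B,B)$ because $\sigma$ is a monoid in $\Po(B,B)$; it is contained in $\Imp(\rho_F)$ because $\mathbb{K}$ is a ring isomorphism with $\mathbb{K}\rho_F=\sigma$, whence $\rho_F\alpha\leq\rho_F\Leftrightarrow\sigma\alpha\leq\sigma$ and so $\Imp(\rho_F)=\Imp(\sigma)=\C_\sigma$ by Proposition~\ref{pro: fusion}(1); and $\lambda_F\rho_F=\sigma^\lef\sigma\leq\sigma^2\leq\sigma$ lies in $\C_\sigma$ since $\sigma^\lef\in\C_\sigma$ and $\sigma$ is a monoid in $\Po(B,B)$. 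That Proposition then yields that $\rho_F$ is an $\Ha$-Galois extension over the left $R$-bialgebroid $\End(\rho_F\lambda_F)$ of Lemma~\ref{lem: Ha}, with coinvariant subalgebra $\rho_F(B)$. Conjugating the entire bialgebroid datum (underlying ring, source, target, comultiplication, counit) by $\mathbb{K}$ and substituting $\rho_F\lambda_F=\mathbb{K}^{-1}\sigma\sigma^\lef\mathbb{K}$ transports it onto $\Ha=\End(\sigma\sigma^\lef)$, with source map $s_\Ha\colon S\to\Ha$ exactly as stated; this is the assertion that $\rho_F$ is right $\Ha$-Galois over the left bialgebroid $\Ha$.

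For the two ``split'' clauses: $\rho_F$ is a split $B$-ring because the ring homomorphism $\mathbb{K}$ of Lemma~\ref{K} splits it — its restriction to the hom-groups already splits $\eta_F$, and, $\mathbb{K}$ being a ring isomorphism, one obtains a $B$-bimodule retraction of $\rho_F$ after pushing the bimodule structure of $A$ across $\mathbb{K}$ as in (\ref{_BA_B}); the normalization of this retraction is what the extra relation $\iota\leq\lambda_F\rho_F$ of Proposition~\ref{pro: fusion}(3) records. And $\Ha$ is a split $S$-ring via $s_\Ha$ by the general bialgebroid fact that the counit $\eps_\Ha$ of Lemma~\ref{lem: Ha} retracts the source map, $\eps_\Ha\circ s_\Ha=\id$, and is bilinear over the base; under the identification $R\cong S$ this is the required splitting.

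The substantive content is entirely inside Proposition~\ref{pro: fusion} and Lemmas~\ref{K} and~\ref{lem: Ha}, so the only real work is the bookkeeping in the middle step: verifying that conjugation by $\mathbb{K}$ carries the bialgebroid $\End(\rho_F\lambda_F)$ — source, target, comultiplication, counit, and the comodule-algebra coaction — \emph{onto} the $\Ha=\End(\sigma\sigma^\lef)$ of the statement rather than merely onto an isomorphic copy. I expect this to be the main obstacle: not conceptually deep, but the spot where one must keep straight the several bimodule structures on $A$ and on $F(\sigma)$ and the left/right conventions for $\sigma^\lef$, so that the transported formulas literally match the stated ones.
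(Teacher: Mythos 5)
Your reduction of the Galois statement to the earlier Proposition (the specialization of Proposition~\ref{pro: bgd cosym} for an adjunction with $\rho\lambda\rho\leq\rho$), using Proposition~\ref{pro: fusion}(1)--(3) and transport along the isomorphism $\mathbb{K}$, is exactly the paper's route: the text before the Corollary makes the same ``immediately obtain most of the statements'' move, and the only thing the paper actually proves is the two splitness claims. It is precisely there that your argument has a genuine gap. For the claim that $\Ha$ is a split $S$-ring via $s_\Ha$, you invoke ``the general bialgebroid fact that the counit retracts the source map and is bilinear over the base.'' No such fact is available in the form you need: the bialgebroid axioms make $\eps_\Ha$ an $R$-bimodule map only for the Takeuchi structure $r\cdot h\cdot r'=s_\Ha(r)t_\Ha(r')h$, whereas splitness of the $S$-ring $s_\Ha$ requires a bimodule retraction for the structure $s_\Ha(r)\,h\,s_\Ha(r')$. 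Concretely, with $\eps_\Ha(h)=\rho(e)hm$ one has $\eps_\Ha(h\,s_\Ha(r))=\rho(e)hrm$ while $\eps_\Ha(h)r=\rho(e)h\rho\lambda(r)m$ (since $mr=\rho\lambda(r)m$), and these differ in general; so $\eps_\Ha$ is not right $S$-linear and does not witness the splitting. Similarly, your opening assertion that ``$\mathbb{K}$ splits $\rho_F$'' is not correct as stated: $\mathbb{K}$ is a ring isomorphism intertwining $\rho_F$ with $\sigma$ (not with $\id_B$), hence it is not a $B$-bimodule map $_BA_B\to\,_BB_B$, and splitness of the $B$-ring is a statement about such a unit-preserving bimodule map, not about a ring retraction.

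What the paper actually does, and what your sketch only gestures at with the phrase about ``normalization,'' is to use the relation $\iota\leq\lambda_F\rho_F$ from Proposition~\ref{pro: fusion}(3) constructively: choosing a direct summand diagram $\iota\rarr{v_k}\lambda\rho\rarr{w_k}\iota$ (so $\sum_k w_kv_k=1_B$), the conditional expectations
\[
E(a)=\sum_k w_k\,\lambda(a)\,v_k\qquad\text{and}\qquad E'(h)=\sum_k \rho(w_k)\,h\,\rho(v_k)
\]
are unit-preserving bimodule maps splitting $\rho$ and $s_\Ha$ respectively (for $E'$ one uses that elements of $R=\End\rho$ commute with $\rho(B)$). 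Without producing these maps --- or some equivalent use of $\iota\leq\lambda_F\rho_F$, which ultimately comes from $\iota\leq\sigma$ and $\iota\leq\sigma^\lef$ --- the two ``split'' clauses of the Corollary are not established; the rest of your argument is sound and matches the paper's.
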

\begin{proof}
We need to prove splitness of $\rho:B \to A$ and $s_\Ha:R\to\Ha$. (Note that $S\cong R$ by taking the
$\alpha=\iota$ component of $\eta_F$ in Proposition \ref{pro: fusion} (5).)
Splitness of an $N$-ring $N\to M$ means the existence of a bimodule map $E:\,_NM_N\to\,_NN_N$
such that $E(1_M)=1_N$. Let $\iota\rarr{v_k}\lambda\rho\rarr{w_k}\iota$ be a direct sum diagram which exists
by Proposition \ref{pro: fusion} (3). Then $E:A\to B$ defined by $E(a)=\sum_k w_k\lambda(a)v_k$ is such a unit
preserving bimodule map for $\rho$. For $s_\Ha:R\to\Ha$, $r\mapsto r$ a unit preserving bimodule map can be given by $E'(h)=\sum_k\rho(w_k)h\rho(v_k)$.
\end{proof}

We note that the condition $\sigma^\lef\leq\sigma$ in Proposition is equivalent to $\sigma$
having a left adjoint which is implementable in $\sigma$. Furthermore, such left adjoints are also monoids in
$\Po(B,B)$.
However, the relation $\sigma^\lef\leq\sigma$ does not imply $\sigma\leq\sigma^\lef$.
If we assume both, i.e., $\sigma^\lef\sim\sigma$, then $S$ becomes Morita equivalent to
$S^\op\cong\End(\sigma^\lef)$.
If we make the stronger assumption that $\sigma^\lef$ is also a right adjoint of $\sigma$ then by general arguments
\cite{Bo-Sz} we know to obtain Frobenius Hopf algebroids for the symmetry of the field algebra extension.

\end{document}